     \def\section{\@startsection{section}{1}%
     \z@{.7\linespacing\@plus\linespacing}{.5\linespacing}%
     {\bfseries
     \centering
     }}
     \def\@secnumfont{\bfseries}
\newtheorem{theorem}{Theorem}[section]
\newtheorem{lemma}[theorem]{Lemma}
\newtheorem{corollary}[theorem]{Corollary}
\theoremstyle{definition}
\newtheorem{assumption}[theorem]{Assumption}
\newtheorem{definition}[theorem]{Definition}
\theoremstyle{remark}
\newtheorem{remark}[theorem]{Remark}
\newtheorem{hypothesis}[theorem]{Hypothesis}
\numberwithin{equation}{section} \setcounter{page}{1}
\newcommand{\Tr}{\mathop{\mathrm{Tr}}}
\renewcommand{\d}{\/\mathrm{d}\/}
\def\s{^{\star}}
\def\u{u^{n, \varepsilon}}
\def\ue{u^{\varepsilon}}
\def\uv{u^{\varepsilon}_{\theta}}
\def\n{u_{\theta_n}}
\def\m{w_{\theta_n}}
\def\uve{u^{\varepsilon}_{{\theta}^{\varepsilon}}}
\def\w{w^{\varepsilon}_{{\theta}^{\varepsilon}}}
\def\e{\varepsilon}
\def\t{t\wedge\tau_N}
\def\T{T\wedge\tau_N}
\begin{document}

\title[Large deviations for the shell model of turbulence]
{Large deviations for the shell model of turbulence perturbed by
L\'{e}vy noise}

\author[U. Manna]{Utpal Manna} \author[M. T. Mohan]{Manil .T.
Mohan}\address{School of Mathematics, Indian Institute of Science
Education and Research, Thiruvananthapuram, India}
\email{manna.utpal@iisertvm.ac.in, manil@iisertvm.ac.in}

\keywords{Large deviations, L\'{e}vy processes, GOY model.}

\subjclass[2000]{Primary 60F10; Secondary 60G51, 60H15, 76D06.}

\begin{abstract}
The Laplace principle for the strong solution of the stochastic
shell model of turbulence perturbed by L\'{e}vy noise is established
in a suitable Polish space using weak convergence approach. The
large deviation principle is proved using the well known results of
Varadhan and Bryc.

\end{abstract}

 \maketitle\setcounter{equation}{0}

\maketitle
\section{Introduction}
The large deviations theory is among the most classical areas in
probability theory with many deep developments and applications.
Several authors have established the Wentzell-Freidlin type large
deviation estimates for a class of infinite dimensional stochastic
differential equations (see for eg., Budhiraja and Dupuis
\cite{BD1}, Da Prato and Zabczyk \cite{DaZ}, Kallianpur and Xiong
\cite{KX}). In these works the proofs of large deviation principle
(LDP) usually rely on first approximating the original problem by
 time-discretization so that LDP can be shown for the resulting
 simpler problems via contraction principle, and then showing that
 LDP holds in the limit. The discretization method to establish LDP
  was introduced by Wentzell and Freidlin\cite{FW}. Dupuis and Ellis
   \cite{DE} have combined weak convergence methods to the stochastic
   control approach developed earlier by Fleming \cite{Fl} to the large deviations theory.

The literature associated to the LDP of stochastic partial
differential equations with L\'{e}vy noises is very few. De Acosta
\cite{Ac1, Ac} first studied the large deviations for L\'{e}vy
processes on Banach spaces and large deviations for solutions of
stochastic differential equations driven by Poisson measures.
Recently Budhiraja, Dupuis, and Maroulas \cite{BD2} and Maroulas
\cite{Mv}, using the theorems of Varadhan and Bryc \cite{DZ}, have
extended the result of Budhiraja and Dupuis \cite{BD1} to prove the
LDP for stochastic differential equations with Poisson noises by
first establishing the Laplace principles in Polish spaces using the
weak convergence approach. The other notable recent work is due to
Swiech and Zabczyk \cite{SZ}, where the large deviation principle
for solutions of abstract stochastic evolution equations perturbed
by small L\'{e}vy noise is proved using the theorems of Varadhan and
Bryc coupled with the techniques of Feng and Kurtz \cite{FK},
viscosity solutions of Hamilton-Jacobi-Bellman equations in Hilbert
spaces and control theory.

To the best of our knowledge, the only work available in the area of
LDP for the fluid dynamics models with jump processes is due to Xu
and Zhang \cite{XZ}, where they applied the theory of De Acosta to
establish the LDP for $2$-D Navier-Stokes equations with additive
L\'{e}vy noises.

This work deals with an infinite dimensional shell model, a
mathematical turbulence model, that received increasing attention in
recent years. Apparently there are only a few rigorous works on
infinite dimensional shell model, namely Constantin, Levant and Titi
\cite{CLT}, and Barbato, Barsanti, Bessaih and Flandoli \cite{Ba}
one in the deterministic case and the other in the stochastic case
with additive noise respectively. In both of these works a
variational semigroup formulation has been introduced. The work by
Manna, Sritharan and Sundar \cite{Ma3} deals with the existence and
uniqueness of the strong solutions of the stochastic shell model of
turbulence perturbed by multiplicative noise. They have also
established a LDP for the solution of the shell model by using weak
convergence approach developed on the theory by Budhiraja and Dupuis
\cite{BD1}. The LDP for the inviscid shell models has been proved by
Bessaih and Millet \cite{BM}. Recently Manna and Mohan \cite{Ma4}
has proved the existence and uniqueness of the strong solutions of
the shell model of turbulence perturbed by L\'{e}vy noise.

In this work, the authors established the LDP for the shell model of
turbulence with L\'{e}vy noise by proving the Laplace principle for
the strong solution in certain Polish space using the weak
convergence approach developed by Budhiraja, Dupuis and Maroulas
\cite{BD2}, and Maroulas \cite{Mv}  and finally applying the well
known results by Varadhan and Bryc.

The main result of this paper is as follows:
\begin{theorem}[Main Theorem]\label{MTH}
Let the stochastic shell model of turbulence perturbed by L\'{e}vy
Noise described by
\begin{eqnarray}
\d \ue + \big[\nu A\ue + B(\ue)\big] \d t &= &f(t) \d t + \sqrt{\e}\sigma(t, \ue) \d W(t)+\e \int_Zg(\ue,z)\tilde{N}(dt,dz)\nonumber\\
 \ue(0) &=& \xi,
\end{eqnarray}
has a unique strong solution in the Polish space
$X=\mathcal{D}([0,T];H)\cap \mathrm{L}^2(0,T;V).$ Let the solution
be denoted by $\ue=\mathcal{G}^{\e}\left(\sqrt{\e}W(\cdot),\e
N^{\e^{-1}}\right)$. Then the family $\{\ue :\e >0\}$ satisfies
Large Deviation Principle in $X$ with the rate function $I$ given by
\begin{align*}
I(\zeta)=\inf_{(\psi,\phi)\in
\mathbb{S}_\zeta}\left\{\int_0^T\int_Z\ell(\phi(t,z))\lambda(dz)\d
t+\frac{1}{2}\int_0^T\|\psi(s)\|_0^2ds\right\}.
\end{align*}
\end{theorem}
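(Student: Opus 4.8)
The plan is to prove the Laplace principle for the family $\{\ue:\e>0\}$ in the Polish space $X=\mathcal{D}([0,T];H)\cap\mathrm{L}^2(0,T;V)$ by verifying the two sufficient conditions of the weak-convergence criterion of Budhiraja, Dupuis and Maroulas \cite{BD2}, and then to pass from the Laplace principle to the large deviation principle using the equivalence supplied by Varadhan's integral lemma together with Bryc's converse, as recalled in \cite{DZ}; on a Polish space the Laplace principle and the LDP with a good rate function are equivalent. Throughout one relies on the antisymmetry of the bilinear operator $B$ (which for the shell model yields the energy cancellation $\langle B(u,v),v\rangle=0$), on the growth and Lipschitz hypotheses on the coefficients $\sigma$ and $g$, and on the existence--uniqueness theory for the stochastic shell model from Manna and Mohan \cite{Ma4}.

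First I would introduce the skeleton. For a control pair $(\psi,\phi)$ in the level set $\mathbb{S}^N$ of the rate function, consider the deterministic controlled equation
\begin{align*}
\frac{d\we}{dt}+\nu A\we+B(\we)&=f(t)+\sigma(t,\we)\psi(t)+\int_Zg(\we,z)\big(\phi(t,z)-1\big)\lambda(dz),\\
\we(0)&=\xi,
\end{align*}
and denote its solution by $\we=\mathcal{G}^0\!\left(\int_0^\cdot\psi(s)\,ds,\,\nu^{\phi}\right)$. Using a Galerkin approximation one shows this equation is well posed in $X$, with energy estimates in $C([0,T];H)\cap\mathrm{L}^2(0,T;V)$ that are uniform over $\mathbb{S}^N$: the bound $\frac12\int_0^T\|\psi\|_0^2\,ds\le N$ controls the $\sigma$-term, while $\int_0^T\!\int_Z\ell(\phi)\,\lambda(dz)\,dt\le N$ gives $\mathrm{L}^1$-type control of $\phi-1$ against the hypotheses on $g$. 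Establishing continuity of the map $(\psi,\phi)\mapsto\we$ from $\mathbb{S}^N$, equipped with the relevant weak topology, into $X$ then yields compactness of the level sets $K_N=\{\mathcal{G}^0(\int_0^\cdot\psi\,ds,\nu^\phi):(\psi,\phi)\in\mathbb{S}^N\}$, which is the first condition.

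Next I would treat the controlled stochastic equation. By a Girsanov-type change of measure, $\ve:=\mathcal{G}^{\e}\!\left(\sqrt{\e}W(\cdot)+\int_0^\cdot\psi^{\e}(s)\,ds,\,\e N^{\e^{-1}\phi^{\e}}\right)$ solves the equation obtained from the governing equation by shifting the Wiener process by $\int\psi^\e$ and tilting the Poisson random measure by $\phi^\e$, where $(\psi^\e,\phi^\e)$ are $\mathbb{S}^N$-valued controls; well-posedness follows from \cite{Ma4}, and one derives moment estimates in $\mathrm{L}^2(\Omega;C([0,T];H))\cap\mathrm{L}^2(\Omega\times[0,T];V)$ uniform in $\e$. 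Assuming $(\psi^\e,\phi^\e)\To(\psi,\phi)$ in distribution as $\mathbb{S}^N$-valued random elements, these estimates give tightness of the laws of $\ve$ in $X$ — via an Aldous-type criterion on $\mathcal{D}([0,T];H)$ combined with the $\mathrm{L}^2(0,T;V)$ bound and the compact embedding $V\hookrightarrow H$. Invoking the Skorokhod representation theorem, one obtains almost sure convergence along a subsequence on a new probability space and passes to the limit in the variational formulation: the stochastic integral disappears because of the factor $\sqrt{\e}$; the tilted compensated Poisson integral $\e\int_Zg(\ve,z)\,\tilde{N}^{\e^{-1}\phi^\e}(dt,dz)$ converges, by the estimates of \cite{BD2}, to $\int_Zg(\we,z)(\phi-1)\lambda(dz)\,dt$; the drift terms converge using the convergence of the controls; and $B(\ve)\to B(\we)$ by the strong convergence in $\mathrm{L}^2(0,T;H)$ furnished by compactness. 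Uniqueness from the first step identifies the limit as $\mathcal{G}^0(\int_0^\cdot\psi\,ds,\nu^\phi)$, which is the second condition; the Laplace principle with the stated rate function $I$, and hence the theorem, then follows.

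The delicate point is the weak-convergence step: one must obtain tightness simultaneously in the Skorokhod space $\mathcal{D}([0,T];H)$ and in $\mathrm{L}^2(0,T;V)$ — the jumps make the Aldous stopping-time estimate more involved than in the purely continuous case — and then control the passage to the limit both in the nonlinear term $B$, which requires the strong $\mathrm{L}^2(0,T;H)$ convergence supplied by compactness together with the antisymmetry of $B$, and in the $\phi^\e$-tilted compensated Poisson integral, which requires the technical apparatus of \cite{BD2} relating $\e N^{\e^{-1}\phi^\e}$ to the limiting intensity $\phi\,\lambda$.
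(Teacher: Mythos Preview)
Your overall strategy---verify the two Budhiraja--Dupuis--Maroulas conditions and then invoke the Laplace/LDP equivalence on a Polish space---is exactly the paper's, and your treatment of the skeleton equation and the compactness condition matches the paper's Theorems~\ref{dcontrol} and~\ref{compact} closely. The genuine difference is in the weak-convergence step. You propose a \emph{tightness/identification} argument: establish tightness of the laws of $\ve$ in $X$ via an Aldous-type criterion in $\mathcal{D}([0,T];H)$, pass to a.s.\ convergent copies by Skorokhod, take limits in the variational formulation (handling $B$ via strong $\mathrm{L}^2(0,T;H)$ convergence and the tilted Poisson integral via the machinery of \cite{BD2}), and identify the limit by uniqueness of the skeleton. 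The paper instead uses a \emph{direct difference estimate}: it applies Skorokhod only to the controls, writes $\w=\uve-u_\theta$, applies It\^o's formula to $|\w|^2$, exploits the antisymmetry of $B$ together with the Lipschitz hypotheses (H.3) on $\sigma$ and $g$, absorbs the martingale terms by Burkholder--Davis--Gundy and Lemma~\ref{bdg}, and closes with Gronwall to show $\mathbb{E}\big[\sup_t|\w|^2+\nu\int_0^T\|\w\|^2\,dt\big]\to 0$; Markov's inequality then gives convergence in probability. This bypasses tightness in the Skorokhod topology entirely and never needs to pass to the limit inside $B$ or inside the compensated Poisson integral. Your route is more ``PDE-compactness'' in flavor and would survive under weaker structural assumptions on the coefficients; the paper's route is shorter and cleaner precisely because Lipschitz continuity and the cancellation $\langle B(u,v),v\rangle=0$ are available, which makes the Gronwall loop close without any subsequence extraction. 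One minor point: in the paper the Poisson control enters the skeleton and the controlled stochastic equation through $\ell(\phi)$ rather than $(\phi-1)$; you should check that your formulation matches whichever convention is fixed in the setup.
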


The construction of the paper is as follows. The next section is
devoted to the formulation the abstract stochastic GOY model, the
energy estimates, the existence and uniqueness of strong solutions.
Proofs have been omitted as these results are already been proved by
the authors \cite{Ma4}. In the last Section proof of the main
theorem has been given in a systematic way.

\section{The Stochastic GOY Model of Turbulence}
\setcounter{equation}{0}
\subsection{Preliminaries}
Let $(\Omega, \mathcal{F}, \mathbb{P})$ be a probability space
equipped with an increasing  family  $\{\mathcal{F}_t\}_{0 \leq t
\leq T}$ of sub-sigma-fields of $\mathcal{F}$  satisfying the usual
conditions of right continuity and $\mathbb{P}$-completeness. Let
$H$ be a real separable Hilbert space and $Q$ be a strictly
positive, symmetric, trace class operator on $H$.
\begin{definition}
A stochastic process $\{W(t)\}_{0\leq t\leq T}$ is said to be an
$H$-valued $\mathcal{F}_t$-adapted Wiener process with covariance
operator $Q$ if
\begin{itemize}
\item For each non-zero $h\in H$, $|Q^{1/2}h|^{-1} (W(t), h)$ is a standard one dimensional Wiener process,
\item For any $h\in H, (W(t), h)$ is a martingale adapted to $\mathcal{F}_t$.
\end{itemize}
\end{definition}
If $W$ is a an $H$-valued Wiener process with covariance operator
$Q$ with $\Tr Q < \infty$, then $W$ is a Gaussian process on $H$ and
$ \mathbb{E}(W(t)) = 0,\quad \text{Cov}\ (W(t)) = tQ, \quad t\geq
0.$ Let $H_0 = Q^{1/2}H.$ Then $H_0$ is a Hilbert space equipped
with the inner product $(\cdot, \cdot)_0$, $(u, v)_0 = (Q^{-1/2}u,
Q^{-1/2}v),\ \forall u, v\in H_0,$ where $Q^{-1/2}$ is the
pseudo-inverse of $Q^{1/2}$. Since $Q$ is a trace class operator,
the imbedding of $H_0$ in $H$ is Hilbert-Schmidt.

Let $L_Q$ denote the space of linear operators $S$ such that $S
Q^{1/2}$ is a Hilbert-Schmidt operator from $H$ to $H$. Define the
norm on the space $\mathrm{L}_Q$ by $|S|_{\mathrm{L}_Q}^2 =
\Tr(SQS^*)$.

\begin{definition}
A c\`{a}dl\`{a}g adapted process is called a L\'{e}vy process if it
has stationary independent increments and is stochastically
continuous.
\end{definition}The jump of $X_t$ at $t\geq 0$ is given by $\triangle X_t = X_t -
X_{t-}$. Let $Z\in \mathcal{B}(H)$, define $$ N(t, Z)= N(t, Z,
\omega)= \sum_{s: 0<s\leq t} \chi_{_Z} (\triangle X_s).$$

In other words, $N(t, Z)$ is the number of jumps of size $\triangle
X_s\in Z$ which occur before or at time $t$. $N(t, Z)$ is called the
\emph{Poisson random measure} (or \emph{jump measure}) of
$(X_t)_{t\geq 0}$. The differential form of this measure is written
as $N(dt, dz)(\omega)$.

We call $\tilde{N}(dt, dz) = N(dt, dz) - \lambda(dz)dt $ a
\emph{compensated Poisson random measure (cPrm)}, where
$\lambda(dz)dt $ is known as \emph{compensator} of the L\'{e}vy
process $(X_t)_{t\geq 0}$. Here $dt$ denotes the Lesbesgue measure
on $\mathcal{B}(\mathbb{R}^{+})$, and $\lambda(dz)$ is a
$\sigma$-finite measure on $(Z, \mathcal{B}(Z))$.
\begin{definition}
Let $H$ and $F$ be separable Hilbert spaces. Let $F_t:=
\mathcal{B}(H)\otimes\mathcal{F}_t$ be the product $\sigma$-algebra
generated by the semi-ring $ \mathcal{B}(H)\times\mathcal{F}_t$ of
the product sets $Z\times F,\quad Z\in\mathcal{B}(H),\quad F\in
\mathcal{F}_t$ ( where $\mathcal{F}_t$ is the filtration of the
additive process $(X_t)_{t\geq0})$. Let $T>0$, define
\begin{align*}
\mathbb{H}(Z) = & \big\{g : \mathbb{R}^+ \times Z \times \Omega
\rightarrow F,\textrm{ such that g is}\;\; F_T/\mathcal{B}(F) \;\;
measurable\;\;and \nonumber\\& \;\;\qquad g(t,z,\omega)\;\;is\;\;
\mathcal{F}_t - adapted\;\;\forall z\in Z, \forall t\in (0,T]\big\}.
\end{align*}
For $p\geq1$, let us define,
$$\mathbb{H}^{p}_\lambda([0,T]\times Z;F) = \left\{g\in \mathbb{H}(Z) :
 \int_0^T\int_Z\mathbb{E}[\|g(t,z,\omega)\|^p_F]\lambda(dz)dt < \infty \right\}.$$
\end{definition}
For more details see Mandrekar and R\"{u}diger \cite{MR}.

\begin{lemma}\label{bdg}
Let $1<p\leq 2$ and let $H$ be a separable Hilbert space of
martingale type $p$, i.e., there is a constant $K_p(H)>0$ such that
for all $H$-valued discrete martingale $\{M_n\}_{n=0}^N$ the
following inequality holds $$\sup_n\mathbb{E}|M_n|^p\leq
K_p(H)\sum_{n=0}^N\mathbb{E}|M_n-M_{n-1}|^p,$$ where set $M_{-1}=0$.
Assume that $g\in \mathbb{H}^p_{\lambda} ((0,\infty)\times Z;H)$.
Then there exists a constant $C=C_p(H)2^{2-p}$ only depending on $H$
and $p$ such that for $0<q\leq p$
$$\mathbb{E}\sup_{t\in
0<t\leq T}\left|\int_0^t\int_Zg(s,z,\omega)\tilde{N}(ds,dz)\right|^q
\leq
C\mathbb{E}\left(\int_0^T\int_Z|g(t,z,\omega)|^p\lambda(dz)dt\right)^{q/p}.$$
\end{lemma}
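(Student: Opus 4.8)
The plan is to establish the bound first for $q=p$ and then obtain the range $0<q<p$ at no cost. The process $M_t:=\int_0^t\int_Z g(s,z)\,\tilde N(ds,dz)$ is a c\`{a}dl\`{a}g $H$-valued martingale, so $t\mapsto|M_t|$ is a nonnegative c\`{a}dl\`{a}g submartingale, and since $p>1$ Doob's $\mathrm{L}^p$ maximal inequality gives
$$\mathbb{E}\sup_{0<t\le T}|M_t|^p\le\Big(\frac{p}{p-1}\Big)^p\mathbb{E}|M_T|^p .$$
Hence it is enough to show $\mathbb{E}|M_T|^p\le 2^{2-p}K_p(H)\,\mathbb{E}\int_0^T\int_Z|g(t,z)|^p\lambda(dz)dt$; then for $0<q<p$ concavity of $r\mapsto r^{q/p}$ yields $\mathbb{E}\sup_t|M_t|^q\le(\mathbb{E}\sup_t|M_t|^p)^{q/p}\le C\big(\mathbb{E}\int_0^T\int_Z|g|^p\lambda(dz)dt\big)^{q/p}$, with $C=C_p(H)2^{2-p}$ and $C_p(H):=\max\{(p/(p-1))^pK_p(H),\,1\}$, which is the asserted inequality.

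To bound $\mathbb{E}|M_T|^p$, recall that the integral against $\tilde N$ is built by approximating $g$ in $\mathbb{H}^p_\lambda$ by simple processes (Mandrekar and R\"{u}diger \cite{MR}); since the estimate below is stable under such limits it suffices to treat $g(s,z,\omega)=g_k(z,\omega)$ for $s\in(t_{k-1},t_k]$, $0=t_0<\cdots<t_n=T$, with each $g_k$ being $\mathcal{F}_{t_{k-1}}$-measurable, bounded, and supported in a set of finite $\lambda$-measure. For such $g$ the skeleton $\{M_{t_k}\}_{k=0}^n$ is a discrete $H$-valued martingale with $M_{t_0}=0$, so the martingale type $p$ hypothesis gives $\mathbb{E}|M_T|^p\le K_p(H)\sum_{k=1}^n\mathbb{E}|M_{t_k}-M_{t_{k-1}}|^p$. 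Now $M_{t_k}-M_{t_{k-1}}=\int_{t_{k-1}}^{t_k}\int_Z g_k(z)\,\tilde N(ds,dz)$, and since the restriction of $\tilde N$ to $(t_{k-1},t_k]\times Z$ is independent of $\mathcal{F}_{t_{k-1}}$, conditioning on $\mathcal{F}_{t_{k-1}}$ freezes $g_k$ to a deterministic function and reduces matters to the ``atomic'' bound below, giving
$$\mathbb{E}|M_{t_k}-M_{t_{k-1}}|^p\le 2^{2-p}\,\mathbb{E}\int_{t_{k-1}}^{t_k}\int_Z|g_k(z)|^p\lambda(dz)ds .$$
Summing over $k$, the right side telescopes to $2^{2-p}\,\mathbb{E}\int_0^T\int_Z|g|^p\lambda(dz)dt$, and passing to the limit $g^{(m)}\to g$ finishes this step.

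It remains to prove, for a deterministic $h\in\mathrm{L}^1\cap\mathrm{L}^2(\lambda\otimes ds;H)$ (which is the case for the building blocks $g_k$ above) and $1\le p\le2$, the estimate
$$\mathbb{E}\Big|\int_0^s\int_Z h(z)\,\tilde N(dr,dz)\Big|^p\le 2^{2-p}\int_0^s\int_Z|h(z)|^p\lambda(dz)dr .$$
Writing $\tilde N=N-\lambda\,dr$ and using $\mathbb{E}\int_0^s\int_Z|h|\,N(dr,dz)=\int_0^s\int_Z|h|\,\lambda(dz)dr$ gives this with constant $2$ when $p=1$, and the It\^{o} isometry $\mathbb{E}|\int_0^s\int_Z h\,\tilde N(dr,dz)|^2=\int_0^s\int_Z|h|^2\lambda(dz)dr$ gives it with constant $1$ when $p=2$. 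Since $h\mapsto\int_0^s\int_Z h\,\tilde N(dr,dz)$ is linear and bounded $\mathrm{L}^1\to\mathrm{L}^1(\Omega)$ with norm $\le2$ and $\mathrm{L}^2\to\mathrm{L}^2(\Omega)$ with norm $\le1$, the Riesz--Thorin theorem makes it bounded $\mathrm{L}^p\to\mathrm{L}^p(\Omega)$ with norm $\le 2^{(2-p)/p}$, and raising to the power $p$ gives exactly the displayed inequality.

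I expect the work to be mostly bookkeeping rather than a single hard step. The two points that need care are: making the reduction to simple processes and the limiting argument rigorous — this is really the construction of the $\tilde N$-integral on $\mathbb{H}^p_\lambda$ and its continuity, which one imports from \cite{MR}; and running the interpolation on the deterministic building block so as to land the sharp constant $2^{2-p}$. A coarser route — the Hilbert-space Burkholder--Davis--Gundy bound $\mathbb{E}|M_T|^p\le c_p\,\mathbb{E}[M]_T^{p/2}$ together with $[M]_T^{p/2}=\big(\sum_{s\le T}|\Delta M_s|^2\big)^{p/2}\le\sum_{s\le T}|\Delta M_s|^p=\int_0^T\int_Z|g|^p N(dr,dz)$ (valid since $p/2\le1$) — also yields the inequality, but only with an unspecified constant.
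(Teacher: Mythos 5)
The paper offers no proof of this lemma at all --- it simply cites Corollary C.2 of Brze\'{z}niak and Hausenblas \cite{BH} --- so your argument is necessarily a different route, and your treatment of the case $q=p$ is essentially correct: Doob's $\mathrm{L}^p$ maximal inequality, reduction to simple processes, the martingale type $p$ inequality on the discrete skeleton, and the Riesz--Thorin interpolation between the $\mathrm{L}^1$ bound (constant $2$) and the It\^{o} isometry to get the atomic estimate with constant $2^{2-p}$ all fit together as you describe, and they reproduce the constant $C_p(H)2^{2-p}$ in the form asserted.

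There is, however, a genuine gap in the passage from $q=p$ to $0<q<p$. The right-hand side of the lemma is $C\,\mathbb{E}\bigl[\bigl(\int_0^T\int_Z|g|^p\lambda(dz)dt\bigr)^{q/p}\bigr]$, i.e.\ the expectation of the $(q/p)$-th power of the (random) integral. Your chain
$\mathbb{E}\sup_t|M_t|^q\leq(\mathbb{E}\sup_t|M_t|^p)^{q/p}\leq C'\bigl(\mathbb{E}\int_0^T\int_Z|g|^p\bigr)^{q/p}$
lands instead on $\bigl(\mathbb{E}X\bigr)^{q/p}$, which by Jensen dominates $\mathbb{E}\bigl[X^{q/p}\bigr]$ since $q/p\leq1$; so you have proved a strictly weaker inequality. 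The distinction matters in this paper: the lemma is invoked with $q=1$, $p=2$ in the weak convergence theorem, where the bound $2\e\sqrt{2}\,\mathbb{E}\bigl(\int_0^T\int_Z|\w|^2|g(\uve,z)|^2\lambda(dz)\d s\bigr)^{1/2}$ is subsequently manipulated by keeping the random factor $\sup_t|\w(t)|$ inside the expectation --- which is only legitimate with the expectation outside the power. The standard repair is Lenglart's domination inequality: your $q=p$ argument applies verbatim with $T$ replaced by any stopping time $\tau$, giving $\mathbb{E}\sup_{s\leq\tau}|M_s|^p\leq C'\,\mathbb{E}A_\tau$ with the predictable increasing process $A_t=\int_0^t\int_Z|g|^p\lambda(dz)ds$; Lenglart's inequality then yields $\mathbb{E}\bigl(\sup_{t\leq T}|M_t|^p\bigr)^{\alpha}\leq c_\alpha (C')^{\alpha}\,\mathbb{E}A_T^{\alpha}$ for every $\alpha\in(0,1)$, and $\alpha=q/p$ gives exactly the asserted estimate (at the cost of a different, but still $p$- and $H$-dependent, constant). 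With that substitution for your concavity step the proof is complete.
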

\noindent For proof see  Corollary C.2 of \cite{BH}.

\noindent \textbf{Note:} Let $C([0,T];H)$ and $\mathcal{D}([0,T];H)$
be the space of all continuous functions and the space of all
c\'{a}dl\'{a}g paths (right continuous functions with left limits)
from $[0,T]$ into $H$, where $H$ is a Hilbert space, endowed with
the uniform topology and Skorohod topology respectively.

\begin{lemma}\label{lemma100}(Kunita's Inequality)
Let us consider the stochastic differential equations driven by
L\'{e}vy noise of the form
\begin{eqnarray*}
\d u(t)=b(u(t))\d t+\sigma(t,u(t))\d
W(t)+\int_Zg(u(t-),z)\tilde{N}(dt,dz).
\end{eqnarray*}
Then for all $p\geq 2$, there exists $C(p,t)>0$ such that for each
$t>t_0\geq 0,$
\begin{eqnarray*}
&&\mathbb{E}\left[\sup_{t_0\leq s\leq
t}|u(s)|^p\right]\nonumber\\&&\leq
C(p,t)\left\{\mathbb{E}|u(0)|^p+\mathbb{E}\left(\int_{t_0}^t|b(u(r))|^p\d
r\right)+\mathbb{E}\left[\int_{t_0}^t|\sigma(r,u(r))|^p\d
r\right]\right.\\&&\left.+\mathbb{E}\left[\int_{t_0}^t(\int_Z|g(u(r-),z)|^2\lambda(dz))^{p/2}\d
r\right]+\mathbb{E}\left[\int_{t_0}^t\int_Z|g(u(r-),z)|^p\lambda(dz)dr\right]\right\}.
\end{eqnarray*}
\end{lemma}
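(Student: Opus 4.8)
The strategy is the standard route for moment estimates of SDEs driven by jump noise: localize so that every stochastic integral becomes a genuine $L^p$-martingale, split $u$ on $[t_0,t]$ into its value at $t_0$, its drift, its Gaussian martingale and its compensated Poisson martingale, estimate the four pieces separately, and then absorb all numerical and time-dependent prefactors into a single constant $C(p,t)$. Everything except the $p$-th moment bound for the compensated Poisson integral is Hölder's inequality and the Burkholder--Davis--Gundy (BDG) inequality; that last bound is the analytic heart of the statement and the step I expect to be the main obstacle.

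Concretely, I would put $\tau_N=\inf\{s\ge t_0:\abs{u(s)}\ge N\}$, prove the inequality with $t$ replaced by $t\wedge\tau_N$ and a constant independent of $N$, and pass to the limit by Fatou's lemma on the left and monotone convergence on the right. Using
\begin{align*}
u(s)=u(t_0)&+\int_{t_0}^s b(u(r))\,\d r+\int_{t_0}^s\sigma(r,u(r))\,\d W(r)\\
&+\int_{t_0}^s\int_Z g(u(r-),z)\,\tilde{N}(\d r,\d z)
\end{align*}
together with $\abs{a_1+\cdots+a_4}^p\le 4^{p-1}(\abs{a_1}^p+\cdots+\abs{a_4}^p)$, one gets the leading term $4^{p-1}\mathbb{E}\abs{u(t_0)}^p$ (which is $\mathbb{E}\abs{u(0)}^p$ when $t_0=0$, the case relevant to the shell model). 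Jensen's inequality bounds the drift contribution by $(t-t_0)^{p-1}\,\mathbb{E}\int_{t_0}^t\abs{b(u(r))}^p\,\d r$, and the BDG inequality for continuous martingales followed by Jensen in $r$ (legitimate since $p\ge 2$) bounds the Gaussian contribution by $C_p\,(t-t_0)^{p/2-1}\,\mathbb{E}\int_{t_0}^t\abs{\sigma(r,u(r))}^p\,\d r$, where $\abs{\sigma}$ is read as the $\mathrm{L}_Q$-norm.

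The main obstacle is $M_s:=\int_{t_0}^s\int_Z g(u(r-),z)\,\tilde{N}(\d r,\d z)$, which is purely discontinuous with $[M,M]_t=\int_{t_0}^t\int_Z\abs{g(u(r-),z)}^2 N(\d r,\d z)$. The BDG inequality applied to $M$ gives $\mathbb{E}\sup_{t_0\le s\le t}\abs{M_s}^p\le C_p\,\mathbb{E}[M,M]_t^{p/2}$; substituting $N=\tilde{N}+\lambda(\d z)\,\d r$ and using $(a+b)^{p/2}\le 2^{p/2-1}(a^{p/2}+b^{p/2})$ separates $\mathbb{E}[M,M]_t^{p/2}$ into a piece of the already-desired form $C\,\mathbb{E}\big(\int_{t_0}^t\int_Z\abs{g}^2\lambda(\d z)\,\d r\big)^{p/2}$ and the $(p/2)$-th moment of a new compensated Poisson integral with integrand $\abs{g}^2$. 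Iterating this step finitely often --- until the running exponent drops into $(1,2]$, where Lemma \ref{bdg} applies (a separable Hilbert space has martingale type $2$, hence type $p$ for every $p\in(1,2]$) --- and dominating the intermediate moments $\int_{t_0}^t\int_Z\abs{g}^{2^j}\lambda(\d z)\,\d r$ by the two target quantities via Hölder's and Young's inequalities, one obtains
\begin{align*}
\mathbb{E}\sup_{t_0\le s\le t}\abs{M_s}^p\le C_p\Big\{\mathbb{E}\Big(\int_{t_0}^t\int_Z\abs{g}^2\lambda(\d z)\,\d r\Big)^{p/2}+\mathbb{E}\int_{t_0}^t\int_Z\abs{g}^p\lambda(\d z)\,\d r\Big\},
\end{align*}
which is the Bichteler--Jacod inequality. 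Adding the four estimates, collecting $4^{p-1}$, the BDG constants and the powers of $t$ into one constant $C(p,t)$, and undoing the localization completes the proof. As an alternative to the iteration on the jump term, one may instead apply the Itô formula for jump semimartingales to $x\mapsto\abs{x}^p$ and estimate the resulting drift and compensator terms by $\big|\abs{x+y}^p-\abs{x}^p-p\abs{x}^{p-2}\langle x,y\rangle\big|\le C_p(\abs{x}^{p-2}\abs{y}^2+\abs{y}^p)$, followed by Gronwall's lemma.
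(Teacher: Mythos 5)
The paper offers no proof of this lemma: it is quoted as Kunita's first inequality with a pointer to Corollary 4.4.24 of Applebaum's book, so there is no internal argument to compare against. Your proposal is a correct, self-contained derivation, and in fact you describe both standard routes. Your main route (localize, split $u$ into initial value, drift, Gaussian martingale and compensated Poisson martingale, apply H\"older/Jensen and BDG to the first three, and handle the jump martingale by BDG plus the decomposition $N=\tilde{N}+\lambda(dz)\,dt$ iterated until the exponent falls into $(1,2]$, where the martingale-type-$2$ estimate of Lemma \ref{bdg} applies) is precisely the Bichteler--Jacod/Marinelli--R\"ockner proof; the one step that needs care is the domination of the intermediate quantities $\mathbb{E}\bigl(\int_{t_0}^t\int_Z|g|^{2^j}\lambda(dz)\,dr\bigr)^{p/2^j}$ by the two endpoint terms, which follows from log-convexity of $\alpha\mapsto\|g\|_{L^\alpha(\lambda)}$ (so that the $L^\alpha$ norm is bounded by the maximum of the $L^2$ and $L^p$ norms for $2\le\alpha\le p$), and you correctly flag this. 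Your stated alternative --- It\^o's formula for $|x|^p$, the two-sided Taylor estimate on the jump increments, and Gronwall --- is essentially Applebaum's own proof and is also the computation the authors carry out by hand inside the proof of Theorem \ref{thm99}. Two cosmetic points: as transcribed, the right-hand side of the lemma has $\mathbb{E}|u(0)|^p$ where the general statement should read $\mathbb{E}|u(t_0)|^p$ (you rightly observe only $t_0=0$ is used in the paper), and $|\sigma|$ must indeed be read as the $\mathrm{L}_Q$-norm for the BDG step on the Brownian integral, as you note.
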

\noindent For proof see Corollary 4.4.24 of \cite{Ap}.

\begin{remark}\label{young}
In this paper we will be frequently using the following form of
Young's inequality with exponents $p$ and $q$  $$ab\leq \e
a^p+C(\e)b^q,\;\;(a,b>0,\e>0)\;\;\textrm{ for }\;\;C(\e)=(\e
p)^{-q/p}q^{-1}.$$
\end{remark}

\subsection{The GOY Model of Turbulence}
\setcounter{equation}{0} The GOY model
(Gledger-Ohkitani-Yamada)~\cite{OY} is a particular case of so
called ``Shell model" (see, Frisch \cite{Fu}). This model is the
Navier-Stokes equation written in the Fourier space where the
interaction between different modes is preserved between nearest
modes. To be precise, the GOY model describes a one-dimensional
cascade of energies among an infinite sequence of complex
velocities, $\{u_n(t)\}$, on a one dimensional sequence of wave
numbers $k_n = k_0 2^n,\quad k_0 > 0,\ n=1, 2, \ldots,$ where the
discrete index $n$ is referred to as the ``shell index". The
equations of motion of the GOY model of turbulence have the form
\begin{align}\label{goy}
\frac{\d u_n}{\d t} + \nu k_n^2 u_n &+ i\big(a k_n
u\s_{n+1}u\s_{n+2} + b k_{n-1}u\s_{n-1}u\s_{n+1} + \nonumber\\ &
+ck_{n-2} u\s_{n-1}u\s_{n-2}\big) = f_n, \quad\text{for}\ n= 1, 2,
\ldots,
\end{align}
along with the boundary conditions $ u_{-1} = u_0 = 0.$ Here $u\s_n$
denotes the complex conjugate of $u_n$, $\nu > 0$ is the kinematic
viscosity and $f_n$ is the Fourier component of the forcing. $a, b$
and $c$ are real parameters such that energy conservation condition
$a + b + c =0$ holds (see Ohkitani and Yamada\cite{OY}).

\subsection{Functional Setting}
Let $H$ be a real Hilbert space such that
\begin{equation*}
H :=\left\{u=(u_1, u_2, \ldots) \in \mathbb{C}^{\infty}:
\sum_{n=1}^{\infty} |u_n|^2 < \infty\right\}.
\end{equation*}
For every $u, v \in H$, the scalar product $(\cdot,\cdot)$ and norm
$|\cdot|$ are defined on $H$ as $(u, v)_H = Re\ \sum_{n=1}^{\infty}
u_n v\s_n, \quad |u| = \left(\sum_{n=1}^{\infty}
|u_n|^2\right)^{1/2}.$ Let us now define the space
$$V :=\left\{u\in H: \sum_{n=1}^{\infty} k_n^2|u_n|^2 < \infty\right\},$$
which is a Hilbert space equipped with the norm $\|u\| =
\left(\sum_{n=1}^{\infty} k_n^2|u_n|^2\right)^{1/2}.$ The linear
operator $A: D(A) \rightarrow H$ is a positive definite, self
adjoint linear operator defined by
\begin{equation}\label{A}
Au=((Au)_1, (Au)_2, \ldots), \ \text{where}\ (Au)_n = k_n^2 u_n,
\quad\forall u\in D(A).
\end{equation}
The domain of $A$, $D(A) \subset H$, is a Hilbert space equipped
with the norm
$$\|u\|_{D(A)} = |Au| = \left(\sum_{n=1}^{\infty} k_n^4|u_n|^2\right)^{1/2}, \quad\forall u\in D(A).$$
Since the operator $A$ is positive definite, we can define the power
$A^{1/2}$ ,
$$A^{1/2}u = (k_1 u_1, k_2 u_2, \ldots), \quad\forall u=(u_1, u_2, \ldots).$$
Furthermore, we define the space
$$D(A^{1/2}) = \left\{u=(u_1, u_2, \ldots): \sum_{n=1}^{\infty} k_n^2 |u_n|^2 < \infty\right\},$$
which is a Hilbert space equipped with the scalar product
$$ (u, v)_{D(A^{1/2})} = (A^{1/2}u, A^{1/2}v), \quad\forall u, v\in D(A^{1/2}),$$
and the norm $$\|u\|_{D(A^{1/2})} = \left(\sum_{n=1}^{\infty} k_n^2
|u_n|^2\right)^{1/2}.$$ Note that $V = D(A^{1/2})$. We consider
$V^{\prime} = D(A^{-1/2})$ as the dual space of $V$. Then the
following inclusion holds
$$V\subset H = H^{\prime}\subset V^{\prime}.$$
We will now introduce the sequence spaces analogue to Sobolev
functional spaces. For $1\leq p <\infty$ and $s\in\mathbb{R}$,
$$\mathrm{W}^{s, p} :=\left\{u = (u_1, u_2, \ldots): \|A^{s/2}u\|_p =
\left(\sum_{n=1}^{\infty}(k_n^s|u_n|)^p\right)^{1/p} <
\infty\right\},$$ and for $p=\infty$, $$\mathrm{W}^{s, \infty}
:=\left\{u = (u_1, u_2, \ldots): \|A^{s/2}u\|_{\infty} = \sup_{1\leq
n<\infty} (k_n^s|u_n|) < \infty\right\},$$ where for
$u\in\mathrm{W}^{s, p}$ the norm is defined as
$\|u\|_{\mathrm{W}^{s, p}} = \|A^{s/2}u\|_p.$ Here $\|\cdot\|$
denotes the usual norm in the $l^p$ sequence space. It is clear from
the above definitions that $W^{1, 2} = V = D(A^{1/2})$.

\begin{remark}
For the shell model we can reasonably assume that the complex
velocities $u_n$ are such that $|u_n| <1$ for almost all $n$. Then
\begin{align*}
\| u \|_{l^4}^4 = \sum_{n=1}^{\infty} |u_n|^4 \leq
\left(\sum_{n=1}^{\infty} |u_n|^2\right)^2 = |u|^4,
\end{align*}
which leads to $H\subset l^4$.
\end{remark}

We now state a Lemma which is useful in this work. We omit the proof
since it is quite simple.
\begin{lemma}\label{La}
For any smooth function $u\in H$, the following holds:
\begin{align}
  \| u \|_{l^4}^4 \leq C |u|^2 \ \|u\|^2.\label{L4}
\end{align}
\end{lemma}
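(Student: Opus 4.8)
The plan is to estimate the $l^4$-norm of a sequence $u=(u_1,u_2,\ldots)\in H$ directly by an interpolation-type argument, using only the definitions of $|\cdot|$ and $\|\cdot\|$ together with the geometric growth $k_n=k_02^n$. The key observation is that we are free to insert and remove factors of $k_n$ in the sum $\sum_n|u_n|^4$ so as to split one copy of $u_n$ off with a positive power of $k_n$ and balance it against another copy with a negative power; since $\sum_n k_n^{-2}<\infty$ (a convergent geometric series), such negative powers are summable.

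Concretely, I would write $|u_n|^4 = \bigl(k_n|u_n|^2\bigr)\bigl(k_n^{-1}|u_n|^2\bigr)$ and apply the Cauchy--Schwarz inequality in $n$:
\begin{align*}
\sum_{n=1}^\infty |u_n|^4
&= \sum_{n=1}^\infty \bigl(k_n|u_n|^2\bigr)\bigl(k_n^{-1}|u_n|^2\bigr)
\leq \left(\sum_{n=1}^\infty k_n^2|u_n|^4\right)^{1/2}
      \left(\sum_{n=1}^\infty k_n^{-2}|u_n|^4\right)^{1/2}.
\end{align*}
For the first factor, bound $|u_n|^2\le |u|^2$ uniformly in $n$, so $\sum_n k_n^2|u_n|^4\le |u|^2\sum_n k_n^2|u_n|^2=|u|^2\|u\|^2$. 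For the second factor, bound $k_n^{-2}\le k_n^{-2}$ and use $|u_n|^4\le |u_n|^2\cdot|u|^2$... actually more cleanly: $\sum_n k_n^{-2}|u_n|^4 \le k_1^{-2}\sum_n|u_n|^4 = k_1^{-2}\|u\|_{l^4}^4$, and since $k_1^{-2}<1$ one can absorb this term. Alternatively, and more robustly, bound the second factor by $\left(\sup_n|u_n|^2\right)\sum_n k_n^{-2}|u_n|^2 \le |u|^2 \|u\|^2 \sum_n (k_n/k_n)\cdots$; the cleanest route is simply $\sum_n k_n^{-2}|u_n|^4\le\bigl(\sup_n k_n^{-2}\bigr)\|u\|_{l^4}^4 = k_1^{-2}\|u\|_{l^4}^4$, giving $\|u\|_{l^4}^4\le k_1^{-1}|u|\,\|u\|\,\|u\|_{l^4}^2$ and hence $\|u\|_{l^4}^2\le k_1^{-1}|u|\,\|u\|$, i.e. \eqref{L4} with $C=k_1^{-2}$. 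A second, self-contained option avoiding any appeal to $\|u\|_{l^4}$ on the right is to pick the split $|u_n|^4=\bigl(k_n^{1/2}|u_n|\bigr)^2\bigl(k_n^{-1/2}|u_n|\bigr)^2$ and then apply Cauchy--Schwarz so that both resulting sums are genuinely $\sum k_n|u_n|^2$-type and $\sum k_n^{-1}|u_n|^2$-type quantities, each controlled by $|u|^{2}$ and $\|u\|^{2}$ after one more Cauchy--Schwarz in the geometric weights.

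The only subtlety — and the one point that deserves care — is ensuring the argument does not secretly require the extra hypothesis $|u_n|<1$ from the preceding Remark; I would phrase the estimate so that the uniform bound used is always $\sup_n|u_n|^2\le|u|^2$ (valid for every $u\in H$), never $|u_n|<1$, so that Lemma \ref{La} holds for all of $H$ as stated. Since all sums involved are sums of nonnegative terms, no convergence issue arises beyond the finiteness of $|u|$ and $\|u\|$, and the constant $C$ depends only on $k_0$ (equivalently on $k_1=2k_0$). I do not anticipate a genuine obstacle here; the lemma is, as the authors note, elementary, and the whole content is choosing the right way to distribute one factor of the weight $k_n$ across the fourth power.
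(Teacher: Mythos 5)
The paper offers no proof of Lemma \ref{La} (the authors simply remark that it is ``quite simple''), so there is nothing to match your argument against line by line; I will assess it on its own terms and against the shortest available route. Your main line is correct: writing $|u_n|^4=(k_n|u_n|^2)(k_n^{-1}|u_n|^2)$ and applying Cauchy--Schwarz gives $\|u\|_{l^4}^4\leq \bigl(\sum_n k_n^2|u_n|^4\bigr)^{1/2}\bigl(\sum_n k_n^{-2}|u_n|^4\bigr)^{1/2}\leq \bigl(|u|^2\|u\|^2\bigr)^{1/2}\bigl(k_1^{-2}\|u\|_{l^4}^4\bigr)^{1/2}$, and dividing by $\|u\|_{l^4}^2$ yields \eqref{L4} with $C=k_1^{-2}$. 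Two small points deserve attention. First, the division is a bootstrap: you must know $\|u\|_{l^4}<\infty$ before cancelling; this follows at once from $\|u\|_{l^4}\leq\|u\|_{l^2}=|u|$ (or by running the estimate on partial sums and letting the truncation level tend to infinity), and it should be stated explicitly, precisely because you are careful not to lean on the hypothesis $|u_n|<1$ from the preceding Remark. Second, your aside that ``$k_1^{-2}<1$ so one can absorb this term'' is both unnecessary and unjustified --- only $k_0>0$ is assumed, so $k_1=2k_0$ need not exceed $1$ --- but you never actually use absorption, only division, so no harm is done. Finally, note that the lemma collapses to two one-line observations: $\|u\|_{l^4}^4\leq(\sup_n|u_n|^2)\sum_n|u_n|^2\leq|u|^4$, and $|u|^2\leq k_1^{-2}\|u\|^2$ since $k_n\geq k_1$ for all $n\geq 1$; combining these gives \eqref{L4} with the same constant $C=k_1^{-2}$ and no self-referential cancellation. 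Your interpolation scaffolding is sound, but it does more work than the statement requires, essentially because in the sequence setting $l^2\hookrightarrow l^4$ already holds without any help from the $V$-norm.
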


\subsection{Properties of the Linear and Nonlinear Operators}
We define the bilinear operator $B(\cdot, \cdot): V \times
H\rightarrow H$ as
$$B(u, v) = (B_1(u,v), B_2(u, v), \ldots),$$ where
\begin{align*}
B_n(u, v)= ik_n\left(\frac{1}{4}u\s_{n+1} v\s_{n-1} -
\frac{1}{2}(u\s_{n+1} v\s_{n+2} + u\s_{n+2} v\s_{n+1})
 + \frac{1}{8}u\s_{n-1} v\s_{n-2}\right).
\end{align*}
In other words, if $\{e_n\}_{n=1}^{\infty}$ be a orthonormal basis
of $H$, i.e. all the entries of $e_n$ are zero except at the place
$n$ it is equal to $1$, then
\begin{align}\label{B}
B(u, v)= i\sum_{n=1}^{\infty}k_n\left(\frac{1}{4}u\s_{n+1} v\s_{n-1}
- \frac{1}{2}(u\s_{n+1} v\s_{n+2} + u\s_{n+2} v\s_{n+1})
 + \frac{1}{8}u\s_{n-1} v\s_{n-2}\right)e_n.
\end{align}

 The following lemma says that $B(u, v)$ makes
sense as an element of $H$, whenever $u\in V$ and $v\in H$ or $u\in
H$ and $v\in V$. It also says that $B(u, v)$ makes sense as an
element of $V^{\prime}$. Here we state the following lemma which has
been proved in Constantin, Levant and Titi \cite{CLT} for the Sabra
shell model, but one can also prove the similar estimates for the
GOY model (see Barbato, Barsanti, Bessaih, and Flandoli\cite{Ba}).
\begin{lemma}\label{Bprop1}
(i) There exist constants $C_1 >0, C_2 >0$,
\begin{equation}
|B(u, v)| \leq C_1 \|u\| |v|, \quad\forall u\in V, v\in H,
\end{equation}
and
\begin{equation}
|B(u, v)| \leq C_2 |u| \|v\|, \quad\forall u\in H, v\in V.
\end{equation}
(ii) $B: H\times H\rightarrow V^{\prime}$ is a bounded bilinear
operator and for a constant $C_3 > 0$
\begin{equation}
\|B(u, v)\|_{V^{\prime}} \leq C_3 |u| |v|, \quad\forall u, v\in H.
\end{equation}
(iii) $B: H\times D(A)\rightarrow V$ is a bounded bilinear operator
and for a constant $C_4 > 0$
\begin{equation}
\|B(u, v)\|_{V} \leq C_4 |u| |Av|, \quad\forall u\in H, v\in D(A).
\end{equation}
(iv) For every $u\in V$ and $v\in H$
\begin{equation}\label{1}
(B(u, v), v) = 0.
\end{equation}
\end{lemma}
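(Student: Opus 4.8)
All the bounds (i)--(iii) will be derived from a single pointwise estimate. From \eqref{B} and the triangle inequality,
\begin{equation*}
|B_n(u,v)| \le k_n\Big(\tfrac14|u_{n+1}|\,|v_{n-1}| + \tfrac12|u_{n+1}|\,|v_{n+2}| + \tfrac12|u_{n+2}|\,|v_{n+1}| + \tfrac18|u_{n-1}|\,|v_{n-2}|\Big),
\end{equation*}
with the convention $u_{-1}=u_0=v_{-1}=v_0=0$. The plan is to use the dyadic structure $k_n=k_02^n$, so that every ratio $k_{n\pm j}/k_n=2^{\pm j}$ is a fixed constant, in order to transfer each factor $k_n$ (or $k_n^{-1}$, or $k_n^2$, according to which norm is being computed) onto whichever of $u$, $v$ carries the stronger norm, and then bound the remaining factor in $\ell^2$ via $\sup_m|w_m|\le|w|$.

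For (i) one writes $k_n|u_{n+1}|=\tfrac12 k_{n+1}|u_{n+1}|$, $k_n|u_{n+2}|=\tfrac14 k_{n+2}|u_{n+2}|$, $k_n|u_{n-1}|=2k_{n-1}|u_{n-1}|$, turning each term above into a constant times $(k_m|u_m|)\,|v_j|$ with $m,j$ among $n-2,\dots,n+2$; squaring, summing in $n$ and using $\sup_j|v_j|^2\le|v|^2$ gives $|B(u,v)|^2=\sum_n|B_n(u,v)|^2\le C\|u\|^2|v|^2$, which is the first estimate, and the second is obtained symmetrically by instead moving $k_n$ onto the $v$-factor. For (ii), recalling that $\|w\|_{V^{\prime}}=|A^{-1/2}w|$ and $(A^{-1/2}B(u,v))_n=k_n^{-1}B_n(u,v)$, the factor $k_n$ simply disappears from the pointwise bound and the same Cauchy--Schwarz argument yields $\|B(u,v)\|_{V^{\prime}}^2\le C|u|^2|v|^2$. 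For (iii), since $(A^{1/2}B(u,v))_n=k_nB_n(u,v)$ there are two powers of $k_n$ to dispose of; writing e.g. $k_n^2|v_{n-1}|=4k_{n-1}^2|v_{n-1}|=4|(Av)_{n-1}|$ and likewise for the other shifted indices converts the bound into a constant times $|u_m|\,|(Av)_j|$, whence $\|B(u,v)\|_V^2=\sum_n k_n^2|B_n(u,v)|^2\le C|u|^2|Av|^2$.

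For (iv) I would compute $(B(u,v),v)=\mathrm{Re}\sum_{n\ge1}B_n(u,v)v\s_n$ directly and split it into the four sums $S_1,S_2,S_3,S_4$ coming from the four terms of $B_n$. After the index shift $n\mapsto n-2$, the sum $S_4$ (coefficient $\tfrac18$) becomes $-S_2$ because $k_{n+2}=4k_n$; after the shift $n\mapsto n+1$, the sum $S_3$ (coefficient $-\tfrac12$) becomes $-S_1$ because $k_{n-1}=\tfrac12 k_n$. Hence the entire complex sum vanishes, a fortiori its real part. The only point requiring care is the legitimacy of these rearrangements and of discarding the shifted-away boundary terms: absolute convergence is guaranteed by $\sum_n|B_n(u,v)|\,|v_n|\le|B(u,v)|\,|v|\le C_1\|u\|\,|v|^2<\infty$ from part (i), and the boundary terms vanish thanks to $u_{-1}=u_0=v_{-1}=v_0=0$. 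The genuine obstacle in the whole lemma is purely clerical --- keeping the index shifts and the resulting powers of $2$ straight in (iv) --- since the rest is a routine application of the Cauchy--Schwarz inequality.
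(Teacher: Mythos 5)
Your proof is correct. The paper itself gives no argument for this lemma --- it simply cites Constantin--Levant--Titi and Barbato--Barsanti--Bessaih--Flandoli --- so there is nothing internal to compare against; your write-up is essentially the standard direct verification that those references carry out. The mechanism in (i)--(iii) is sound: since $k_n=k_02^n$, each ratio $k_{n\pm j}/k_n$ is a fixed power of $2$, so the factor $k_n$ (or $k_n^{-1}$, or $k_n^2$) can be absorbed into whichever coordinate carries the stronger norm, after which $\sup_m|w_m|\le |w|$ and $\ell^2$-summation give the stated bounds. I checked the cancellation in (iv) explicitly: with $S_1,\dots,S_4$ the four sums in $\sum_n B_n(u,v)v_n^{\star}$, the shift $n\mapsto n+2$ in $S_4$ produces $\tfrac18 k_{n+2}=\tfrac12 k_n$, matching $S_2$ with opposite sign, and the shift $n\mapsto n-1$ in $S_3$ produces $\tfrac12 k_{n-1}=\tfrac14 k_n$, matching $S_1$ with opposite sign, so the full complex sum (not merely its real part) vanishes; the boundary terms are killed by $u_{-1}=u_0=v_{-1}=v_0=0$ and the rearrangement is justified by the absolute convergence $\sum_n|B_n(u,v)|\,|v_n|\le C_1\|u\|\,|v|^2<\infty$ from part (i), exactly as you say. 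One cosmetic remark: it is worth stating explicitly that for $v\in H$ (indexed from $n=1$) the symbols $v_0,v_{-1}$ are defined to be zero, since this convention is what makes both the shifted sums and the original formula for $B_n$ meaningful at $n=1,2$.
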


We now present one more important property of the nonlinear operator
$B$ in the following lemma which will play an important role in the
later part of this section and also in the next section. The proof
is straightforward and uses the bilinearity property of $B$.
\begin{lemma}\label{Bprop2}
If $w=u-v$, then
$$B(u, u)-B(v, v) = B(v, w) + B(w, v) + B(w, w).$$
\end{lemma}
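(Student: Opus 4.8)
The plan is to exploit only the bilinearity of $B$ in each of its two arguments, together with the substitution $u = v + w$. First I would write $u = v + w$ in both slots of $B(u,u)$ and expand using linearity in the first argument:
\[
B(u,u) = B(v+w, u) = B(v, u) + B(w, u).
\]
Then I would expand each of these two terms using linearity in the second argument, again writing $u = v + w$:
\[
B(v, u) = B(v, v+w) = B(v,v) + B(v,w), \qquad B(w, u) = B(w, v+w) = B(w,v) + B(w,w).
\]
Adding these gives
\[
B(u,u) = B(v,v) + B(v,w) + B(w,v) + B(w,w),
\]
and subtracting $B(v,v)$ from both sides yields the claimed identity $B(u,u) - B(v,v) = B(v,w) + B(w,v) + B(w,w)$.

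The only point that needs a word of justification is that all the terms appearing are well-defined elements of the relevant space: since $u, v \in V$ (say), we have $w = u - v \in V$, and by Lemma~\ref{Bprop1} the quantities $B(v,w)$, $B(w,v)$, $B(w,w)$ all make sense in $H$ (or in $V'$ if one only assumes $u,v\in H$), so that the rearrangement of terms is legitimate. There is essentially no obstacle here: the argument is purely algebraic, and the main (trivial) point is simply to be careful that bilinearity is applied in the correct slot at each step. No estimates, fixed-point arguments, or analytic input are required.
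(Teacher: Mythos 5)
Your argument is correct and is exactly what the paper has in mind: the paper omits the proof, remarking only that it ``is straightforward and uses the bilinearity property of $B$,'' which is precisely your expansion of $B(v+w,v+w)$ slot by slot. Nothing further is needed.
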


With above functional setting and following the classical treatment
of the Navier-Stokes equation, one can write the stochastic GOY
model of turbulence \eqref{goy} with the L\'{e}vy forcing as the
following,
\begin{align}\label{goy1}
\d u + \big[\nu Au + B(u, u)\big] \d t = f(t) \d t +
\sqrt{\varepsilon}\sigma(t, u) \d W(t) +
\varepsilon\int_Zg(u,z)\tilde{N}(dt,dz)
\end{align}
$u(0) = u_0$, where $u \in H$, the operators $A$ and $B$ are defined
through \eqref{A} and \eqref{B} respectively,  $f=(f_1, f_2,
\ldots), \sigma(t, u)=(\sigma_1(t, u_1), \sigma_2(t, u_2), \ldots)$.
Here $(W(t)_{t\geq 0})$ is a $H$-valued Wiener process with trace
class covariance $Q$, and the space $L_Q$ has been defined in the
beginning of this section. Here $g(u,z)$ is a measurable mapping
from $H \times Z$ into H, where $Z$ is a measurable space and $Z\in
\mathcal{B}(H)$, and let $ \mathcal{D}([0,T];H) $ be the space of
all c\`{a}dl\`{a}g paths from $[0,T]$ into H.

Assume that $\sigma$ and $g$ satisfy the following hypotheses of
joint continuity, linear growth and Lipschitz condition:
\begin{hypothesis}\label{hyp}
The main hypothesis is the following,
\begin{itemize}
\item[H.1.]  The function $\sigma \in C([0, T] \times V; L_Q(H_0; H))$, and $g\in \mathbb{H}^2_{\lambda}([0, T] \times Z; H)$.

\item[H.2.]  For all $t \in (0, T)$, there exists a positive constant $K$ such that for all $u \in H$,
$$|\sigma(t, u)|^2_{L_Q} + \int_{Z} |g(u, z)|^2_{H}\lambda(dz) \leq K(1 +|u|^2).$$

\item[H.3.]  For all $t \in (0, T)$,  there exists a positive constant $L$ such that for all $u, v \in H$,
$$|\sigma(t, u) - \sigma(t, v)|^2_{L_Q} + \int_{Z} |g(u, z)-g(v, z)|^2_{H}\lambda(dz)\leq L|u - v|^2.$$
\end{itemize}
\end{hypothesis}

The following lemma shows that sum of the linear and nonlinear
operator is locally monotone in the $l^4$-ball.
\begin{lemma}\label{Mon}
For a given $r > 0$, let us denote by $\mathbb{B}_r$ the closed
$l^4$-ball in $V$: $\mathbb{B}_r = \big\{v\in V; \|v\|_{l^4} \leq
r\big\}.$ Define the nonlinear operator F on $V$ by $F(u):=-\nu Au -
B(u, u)$. Then for $0 < \varepsilon < \frac{\nu}{2L}$, where $L$ is
the positive constant that appears in the condition (H.3), the pair
$(F, \sqrt{\varepsilon}\sigma + \varepsilon\int_Zg(.,z)\lambda(dz))$
is monotone in $\mathbb{B}_r$, i.e. for any $u\in V$ and $v\in
\mathbb{B}_r$, and $w=u-v$,
\begin{align}\label{monotone}
&(F(u) - F(v), w)  - \frac{r^4}{\nu^3} |w|^2 +
\varepsilon\left[|\sigma(t,u)-\sigma(t,v)|^2_{L_Q}
\right.\nonumber\\&\left.\qquad\qquad\qquad\qquad\qquad+
\int_Z|g(u,z)-g(v,z)|^2\lambda(dz)\right]\leq0.
\end{align}
\end{lemma}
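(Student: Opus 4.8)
The plan is to expand $(F(u)-F(v),w)$ explicitly, discard the two trilinear terms that vanish by the orthogonality property of $B$, estimate the one surviving trilinear term using the $l^4$-structure of the nonlinearity, and finally absorb it — together with the noise contribution — into the viscous dissipation. Writing $F(u)-F(v)=-\nu Aw-\big(B(u,u)-B(v,v)\big)$ and using Lemma \ref{Bprop2} to decompose $B(u,u)-B(v,v)=B(v,w)+B(w,v)+B(w,w)$, one pairs with $w$ and uses $(Aw,w)=\|w\|^2$ to obtain
$$(F(u)-F(v),w)=-\nu\|w\|^2-(B(v,w),w)-(B(w,v),w)-(B(w,w),w).$$
Since $v,w\in V\subset H$, relation \eqref{1} of Lemma \ref{Bprop1}(iv) applies to the first and third trilinear terms, which therefore vanish, and one is left with $(F(u)-F(v),w)=-\nu\|w\|^2-(B(w,v),w)$.

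The heart of the proof is the estimate of the remaining term $(B(w,v),w)$, and it must be arranged so that $v$ enters only through $\|v\|_{l^4}$: the bounds of Lemma \ref{Bprop1}(i) are of no use here, as they involve $|v|$ or $\|v\|$, neither of which is controlled on $\mathbb{B}_r$. Starting from the explicit expression \eqref{B} for $B$ and applying H\"older's inequality with exponents $4,4,2$ — putting the $l^4$-norm on the components of $v$ and on one factor of $w$, and keeping the weight $k_n$ paired with the other factor of $w$ — one gets $|(B(w,v),w)|\le C\,\|v\|_{l^4}\,\|w\|_{l^4}\,\|w\|$. Lemma \ref{La} then gives $\|w\|_{l^4}\le C\,|w|^{1/2}\|w\|^{1/2}$, hence
$$|(B(w,v),w)|\le C\,\|v\|_{l^4}\,|w|^{1/2}\,\|w\|^{3/2}.$$

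Now I apply Young's inequality (Remark \ref{young}) with exponents $\tfrac43$ on $\|w\|^{3/2}$ and $4$ on $\|v\|_{l^4}|w|^{1/2}$, peeling off $\tfrac{\nu}{2}\|w\|^2$; the complementary term is an absolute constant times $\nu^{-3}\|v\|_{l^4}^4|w|^2$, and since $v\in\mathbb{B}_r$ this is at most a constant times $\nu^{-3}r^4|w|^2$. Absorbing that constant as is customary for such estimates, one obtains
$$(F(u)-F(v),w)\le-\nu\|w\|^2+\frac{\nu}{2}\|w\|^2+\frac{r^4}{\nu^3}|w|^2=-\frac{\nu}{2}\|w\|^2+\frac{r^4}{\nu^3}|w|^2.$$
For the noise terms, Hypothesis \ref{hyp}(H.3) gives $|\sigma(t,u)-\sigma(t,v)|_{L_Q}^2+\int_Z|g(u,z)-g(v,z)|^2\lambda(dz)\le L|w|^2$; using the Poincar\'e-type inequality $|w|\le\|w\|$ and $\varepsilon<\tfrac{\nu}{2L}$, the $\varepsilon$-term is bounded by $\varepsilon L|w|^2\le\tfrac{\nu}{2}\|w\|^2$, which exactly cancels the surviving $-\tfrac{\nu}{2}\|w\|^2$. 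Adding this to the previous display yields
$$(F(u)-F(v),w)-\frac{r^4}{\nu^3}|w|^2+\varepsilon\Big[|\sigma(t,u)-\sigma(t,v)|_{L_Q}^2+\int_Z|g(u,z)-g(v,z)|^2\lambda(dz)\Big]\le0,$$
which is precisely \eqref{monotone}.

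The only genuinely delicate point is the trilinear estimate: one must recognize that the cancellations of Lemma \ref{Bprop1}(iv) leave \emph{only} $(B(w,v),w)$, and then produce the sharp interpolated bound $\|v\|_{l^4}|w|^{1/2}\|w\|^{3/2}$ — it is this precise balance of powers that, through Young's inequality, forces the $\tfrac{r^4}{\nu^3}$ coefficient in \eqref{monotone}. Once that is in hand, the rest is routine bookkeeping with Young's inequality, the embedding $V\hookrightarrow H$, and Hypothesis \ref{hyp}(H.3).
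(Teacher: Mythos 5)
Your argument is correct and is essentially the proof the paper delegates to Lemma 3.6 of \cite{Ma4}: decompose $B(u,u)-B(v,v)$ via Lemma \ref{Bprop2}, kill $(B(v,w),w)$ and $(B(w,w),w)$ by \eqref{1}, bound the surviving term by $C\|v\|_{l^4}\|w\|_{l^4}\|w\|$ and interpolate with Lemma \ref{La}, then apply Young's inequality and Hypothesis (H.3) with $\varepsilon<\nu/(2L)$. The only (shared) cosmetic looseness is the suppressed multiplicative constant in front of $r^4/\nu^3$, which you flag explicitly, so nothing further is needed.
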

For proof see Lemma 3.6 of \cite{Ma4}.

\subsection{Energy Estimates and Existence Result}
 Let $H_n$ be defined as the $\text{span}\ \{e_1, e_2,
\cdots, e_n\},$ where $\{e_j\}$ is any fixed orthonormal basis in
$H$ with each $e_j \in D(A)$. Let $P_n$ denote the orthogonal
projection of $H$ to $H_n$. Define $u^n = P_n u$, to avoid confusion
with earlier notation $u_n$. Let $W_n = P_nW$ . Let $\sigma_n =
P_n\sigma $ and
$\int_Zg^n(\u(t-),z)\tilde{N}(dt,dz)=P_n\int_Zg(u(t-),z)\tilde{N}(dt,dz)$,
where $g^n=P_ng$. Define $\u$ as the solution of the following
stochastic differential equation in the variational form such that
for each $v \in H_n$,
\begin{align}\label{variational}
\d (\u(t) , v)& = (F(\u(t)), v)\d t + (f(t), v)\d t +
\sqrt{\varepsilon}(\sigma_n(t, \u(t)) \d W_n(t), v) \nonumber\\
&\quad + \varepsilon\int_Z\big(g^n(\u(t-),z),v\big)\tilde{N}(dt,dz),
\end{align}
with $u(0) = P_n u(0) $.

\begin{theorem}\label{energy}
With the above mathematical setting let $f$ be in $\mathrm{L}^2( [0,
T], V')$,  $u (0)$ be $\mathcal{F}_0$ measurable, $\sigma \in C([0,
T] \times V; L_Q(H_0; H))$,  $g\in \mathbb{H}^2_{\lambda}([0, T]
\times Z; H)$ and $\mathbb{E}|u (0)|^2 < \infty$. Let $\u$ denote
the unique strong solution of the stochastic differential equation
\eqref{variational} in $\mathcal{D}([0, T]; H_n)$. Then with $K$ as
in condition (H.2), the following estimates hold: \\For all
$\varepsilon$, and $0 \leq t \leq T$,
\begin{align}
&\mathbb{E}| \u(t)|^2 + \nu\int_0^t \mathbb{E}\| \u(s)\|^2 \d s
\nonumber\\ &\quad \leq \left(1 + \varepsilon KTe^{\varepsilon
KT}\right)\left(\mathbb{E}|u(0)|^2 +
\frac{1}{\nu}\int_0^t\|f(s)\|_{V^\prime}^2ds + \varepsilon KT\right)
,\label{energy1}
\end{align}
and for all $\varepsilon >0$,
\begin{align}
\mathbb{E}\left[\sup_{0\leq t\leq T} |\u(t)|^2 + \nu\int_0^T
\|\u(t)\|^2 \d t\right] \leq C\left(\mathbb{E}|u(0)|^2, \int_0^T
\|f(t)\|^2_{V^{\prime}} \d t, \nu, T\right)\label{energy2}.
\end{align}
\end{theorem}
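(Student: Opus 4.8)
The plan is to derive both estimates from It\^o's formula applied to $t\mapsto|\u(t)|^2$ for the Galerkin system \eqref{variational}. Since $\u$ takes values in the finite-dimensional space $H_n$, I would first localise by the stopping times $\tau_N:=\inf\{t\ge0:|\u(t)|>N\}$, so that on $[0,\t]$ every stochastic integral below is a genuine martingale; because global existence in $\mathcal D([0,T];H_n)$ is assumed, $\tau_N\uparrow\infty$ a.s., and all final bounds follow on letting $N\to\infty$ via monotone/dominated convergence. It\^o's formula for the square norm of a L\'evy-driven semimartingale gives, for $t\le T$,
\begin{align*}
|\u(\t)|^2+2\nu\int_0^{\t}\|\u(s)\|^2\,\d s
&=|P_nu(0)|^2+2\int_0^{\t}(f(s),\u(s))\,\d s+\e\int_0^{\t}|\sigma_n(s,\u(s))|^2_{L_Q}\,\d s\\
&\quad+\e^2\int_0^{\t}\!\int_Z|g^n(\u(s-),z)|^2\lambda(dz)\,\d s+M^c_{\t}+M^{d,1}_{\t}+M^{d,2}_{\t},
\end{align*}
where I have used the key cancellation $(F(u),u)=-\nu\|u\|^2$, valid because $(B(u,u),u)=0$ by Lemma~\ref{Bprop1}(iv), and where $M^c_t=2\sqrt\e\int_0^t(\u(s),\sigma_n(s,\u(s))\,\d W_n(s))$, $M^{d,1}_t=2\e\int_0^t\!\int_Z(\u(s-),g^n(\u(s-),z))\tilde N(ds,dz)$, and $M^{d,2}_t=\e^2\int_0^t\!\int_Z|g^n(\u(s-),z)|^2\tilde N(ds,dz)$. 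I would then bound $2(f,\u)\le\nu\|\u\|^2+\tfrac1\nu\|f\|^2_{V'}$ by Young's inequality (Remark~\ref{young}), absorbing one copy of $\nu\|\u\|^2$ into the left-hand side.

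For \eqref{energy1} I take expectations: the three martingale terms vanish, and since $P_n$ is a contraction, $|\sigma_n|_{L_Q}\le|\sigma|_{L_Q}$ and $|g^n|\le|g|$, so by H.2 the noise contributions are dominated by $\e K\int_0^{\t}(1+|\u(s)|^2)\,\d s$ (for $\e\le1$, the range of interest; otherwise replace $\e K$ by $(\e\vee\e^2)K$). Discarding the nonnegative term $\nu\int\|\u\|^2$ and setting $\phi(t)=\mathbb E|\u(\t)|^2$ yields $\phi(t)\le \mathbb E|u(0)|^2+\tfrac1\nu\int_0^t\|f(s)\|^2_{V'}\,\d s+\e Kt+\e K\int_0^t\phi(s)\,\d s$; Gronwall's lemma then gives $\phi(t)\le\big(\mathbb E|u(0)|^2+\tfrac1\nu\int_0^t\|f\|^2_{V'}\,\d s+\e KT\big)e^{\e KT}$. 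Reinserting this into the inequality \emph{before} Gronwall, now retaining $\nu\int\mathbb E\|\u\|^2$, produces the prefactor $1+\e KTe^{\e KT}$ and, after $N\to\infty$, exactly \eqref{energy1} (using $\mathbb E|P_nu(0)|^2\le\mathbb E|u(0)|^2$).

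For \eqref{energy2} I instead take the supremum over $t\in[0,T]$ in the It\^o identity and then expectations. The continuous martingale is controlled by Burkholder--Davis--Gundy followed by Young's inequality: $\mathbb E\sup_{t\le T}|M^c_{\t}|\le C\sqrt\e\,\mathbb E\big(\int_0^{\T}|\sigma_n|^2_{L_Q}|\u(s)|^2\,\d s\big)^{1/2}\le\tfrac14\mathbb E\sup_{t\le T}|\u(\t)|^2+C\e\,\mathbb E\int_0^{\T}|\sigma_n|^2_{L_Q}\,\d s$. For $M^{d,1}$ I would invoke Lemma~\ref{bdg} (a separable Hilbert space is of martingale type $2$) with $p=2$, $q=1$, then Young again, obtaining $\mathbb E\sup_{t\le T}|M^{d,1}_{\t}|\le\tfrac14\mathbb E\sup_{t\le T}|\u(\t)|^2+C\e^2\mathbb E\int_0^{\T}\!\int_Z|g^n|^2\lambda(dz)\,\d s$. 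The term $M^{d,2}$ is the delicate one, and handling it is the main obstacle: a naive BDG estimate would generate $\int\int|g^n|^4\lambda(dz)\,\d s$, which H.2 does not control. I avoid this by the elementary bound $M^{d,2}_t=\e^2\int_0^t\!\int_Z|g^n|^2N(ds,dz)-\e^2\int_0^t\!\int_Z|g^n|^2\lambda(dz)\,\d s\le\e^2\int_0^t\!\int_Z|g^n|^2N(ds,dz)$, so that $\sup_{t\le T}M^{d,2}_{\t}\le\e^2\int_0^{\T}\!\int_Z|g^n|^2N(ds,dz)$, whose expectation equals $\e^2\mathbb E\int_0^{\T}\!\int_Z|g^n|^2\lambda(dz)\,\d s$. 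Collecting everything, absorbing the two $\tfrac14\mathbb E\sup|\u|^2$ terms into the left-hand side, and bounding $\int_0^{\T}\big(|\sigma_n|^2_{L_Q}+\int_Z|g^n|^2\lambda(dz)\big)\,\d s\le KT\big(1+\sup_{t\le T}\mathbb E|\u(\t)|^2\big)$ via H.2 (finite by \eqref{energy1}), I obtain, after $N\to\infty$, inequality \eqref{energy2} with a constant depending only on $\mathbb E|u(0)|^2$, $\int_0^T\|f\|^2_{V'}\,\d t$, $\nu$ and $T$. Apart from the $M^{d,2}$ step, every estimate is routine.
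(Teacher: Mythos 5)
Your proposal is correct, and its skeleton coincides with the strategy this paper uses for the general $p$-th moment bound in Theorem \ref{thm99} (the proof of Theorem \ref{energy} itself is omitted here with a pointer to \cite{Ma4}): It\^o's formula for $|u^{n,\e}|^2$, localization by stopping times, the cancellation $(B(u,u),u)=0$, Young's inequality on $(f,u^{n,\e})$, Gronwall for \eqref{energy1} with the reinsertion step that produces the exact prefactor $1+\e KTe^{\e KT}$, and BDG plus Young for the supremum estimate \eqref{energy2}. The one genuine point of divergence is the treatment of the jump contribution. The paper lumps the two jump terms into a single quantity $I$ and controls $\mathbb{E}\sup_t|I(t)|$ by Kunita's inequality (Lemma \ref{lemma100}), which for $p=2$ returns precisely $\mathbb{E}\int_0^T\int_Z|g|^2\lambda(dz)\,\d s$; you instead split off the linear part $2\e\int\int(u^{n,\e}(s-),g)\tilde N(ds,dz)$ (handled by Lemma \ref{bdg} with $p=2$, $q=1$) and dispose of the quadratic part $\e^2\int\int|g|^2\tilde N(ds,dz)$ by the one-sided bound $M^{d,2}_t\le\e^2\int_0^t\int_Z|g|^2N(ds,dz)$, using that a one-sided control of $\sup_t M^{d,2}_t$ suffices since this term sits on the right of the identity with a plus sign. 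This correctly sidesteps the $\int\int|g|^4\lambda$ obstruction you identify, is more elementary (no Kunita needed at $p=2$), while the paper's route generalizes uniformly to all $p\ge 2$. Two minor points: your restriction to $\e\le 1$ (to absorb $\e^2\le\e$) is implicitly shared by the paper, whose statement ``for all $\e$'' should be read against the standing smallness assumption $0<\e<\nu/L$; and your closing bound via $KT(1+\sup_t\mathbb{E}|u^{n,\e}(t)|^2)$ is a legitimate bootstrap from \eqref{energy1}, equivalent to the second Gronwall application one would otherwise perform.
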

\begin{assumption}\label{assum1}
Let $p\geq 2$, for all $t\in (0,T)$, and for all $u\in H$, there
exists a positive constant $K_1$ such that
\begin{eqnarray}\label{eqtn99}
|\sigma(s,u(t))|^p+\int_Z|g(u(t),z)|^p\lambda(dz)\leq
K_1\left(1+|u(t)|^p\right).
\end{eqnarray}
\end{assumption}

\begin{theorem}\label{thm99}
Let  $p\geq 2$, $u(0)$ be $\mathcal{F}_0$ measurable, $f\in
\mathrm{L}^p( [0, T], V')$, $\sigma \in C([0, T] \times V; L_Q(H_0;
H))$,  $g\in \mathbb{H}^p_{\lambda}([0, T] \times Z; H)$ and let
$\mathbb{E}|u(0)|^p< \infty$. Let $\u(t)$ denote the unique strong
solution to finite system of equations (\ref{variational}) in
$\mathcal{D}([0, T], H_n)$. Then with $K$ as in condition (H.2) and
$K_1$ as in Assumption, the following estimates hold:
\begin{eqnarray}\label{90}
\mathbb{E}\left[\sup_{0\leq t\leq T} |\u(t)|^p\right]
+\frac{p\nu}{2}\mathbb{E}\left(\int_0^T\|\u(t)\|^2|\u(t)|^{p-2}dt
\right)\nonumber\\\leq C\left(\mathbb{E}|u(0)|^2,K,K_1,p,
T,\nu,\int_0^T\|f(t)|^p_{V'}\d t\right).
\end{eqnarray}
\end{theorem}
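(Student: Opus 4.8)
The plan is to establish the $p$-th moment estimate \eqref{90} by applying It\^{o}'s formula to the function $u \mapsto |u|^p$ along the Galerkin approximation $\u$ solving \eqref{variational}, following the same pattern already used to obtain \eqref{energy2} in Theorem \ref{energy}, but now keeping track of the higher-order jump terms. First I would write $|\u(t)|^p = (|\u(t)|^2)^{p/2}$ and apply the It\^{o}-L\'{e}vy formula: the drift contributes a term $\frac{p}{2}|\u|^{p-2}\big[2(F(\u),\u) + 2(f,\u) + \e|\sigma_n(t,\u)|_{L_Q}^2\big]\d t$, where the crucial dissipativity comes from $(F(\u),\u) = -\nu\|\u\|^2 - (B(\u,\u),\u) = -\nu\|\u\|^2$ by \eqref{1}; the Gaussian martingale part $\sqrt{\e}\,p|\u|^{p-2}(\sigma_n(t,\u)\,\d W_n(t),\u)$ vanishes in expectation after a localization/stopping-time argument; and the Poisson part splits, via $\tilde N = N - \lambda\,\d t$, into a mean-zero stochastic integral and a compensator integral. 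The compensator piece requires the elementary inequality $\big||a+b|^p - |a|^p - p|a|^{p-2}(a,b)\big| \le C_p(|a|^{p-2}|b|^2 + |b|^p)$, so that the jump contribution is bounded by $C_p\e^2\int_Z(|\u|^{p-2}|g^n(\u,z)|^2 + |g^n(\u,z)|^p)\lambda(\d z)\,\d t$.

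Next I would take expectations (after stopping at $\tau_N = \inf\{t : |\u(t)| \ge N\}$ and then letting $N \to \infty$ with Fatou's lemma, using the already-established finiteness from Theorem \ref{energy}), arriving at
\begin{align*}
\mathbb{E}|\u(t)|^p + p\nu\,\mathbb{E}\!\int_0^t\!\|\u(s)\|^2|\u(s)|^{p-2}\d s
\le \mathbb{E}|u(0)|^p + C_p\,\mathbb{E}\!\int_0^t\! |\u(s)|^{p-2}\big[(f(s),\u(s)) + K(1+|\u(s)|^2)\big]\d s \\
+ C_p\,\mathbb{E}\!\int_0^t\!\int_Z\big(|\u(s)|^{p-2}|g^n(\u,z)|^2 + |g^n(\u,z)|^p\big)\lambda(\d z)\,\d s.
\end{align*}
Now I invoke Hypothesis \ref{hyp} (H.2) for the $L_Q$-norm and the $L^2$-in-$z$ norm of $g$, and Assumption \ref{assum1} for the $L^p$-in-$z$ norm of $g$; the forcing term is handled by $|\u|^{p-2}(f,\u) \le |\u|^{p-2}\|f\|_{V'}\|\u\| \le \frac{\nu}{2}\|\u\|^2|\u|^{p-2} + C_\nu\|f\|_{V'}^2|\u|^{p-2}$, absorbing the first term into the dissipation on the left. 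Every remaining term on the right is, after Young's inequality (Remark \ref{young}) with exponents $\frac{p}{p-2}$ and $\frac{p}{2}$, dominated by a constant times $1 + |\u(s)|^p + \|f(s)\|_{V'}^p$. This yields $\mathbb{E}|\u(t)|^p \le C_1 + C_2\int_0^t \mathbb{E}|\u(s)|^p\,\d s$ with $C_1$ depending only on $\mathbb{E}|u(0)|^p$, $K$, $K_1$, $p$, $T$, $\nu$, $\int_0^T\|f(t)\|_{V'}^p\,\d t$, and Gr\"{o}nwall's inequality gives a uniform-in-$n$ (and $\e$, for $\e$ bounded) bound on $\sup_{t\le T}\mathbb{E}|\u(t)|^p$.

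Finally, to upgrade the bound on $\sup_t \mathbb{E}|\u(t)|^p$ to a bound on $\mathbb{E}\sup_t|\u(t)|^p$, I return to the It\^{o} expansion before taking expectations and estimate the supremum of each martingale term using the Burkholder-Davis-Gundy inequality for the Gaussian part and Lemma \ref{bdg} (with $q = 1$, $p$ replaced by a suitable exponent in $(1,2]$) together with Kunita's inequality (Lemma \ref{lemma100}) for the Poisson part; the $\frac{p\nu}{2}$-dissipation term is produced along the way exactly as in \eqref{90}. The BDG bound on $\mathbb{E}\sup_s|\int_0^s\sqrt\e\,p|\u|^{p-2}(\sigma_n\,\d W_n,\u)|$ is controlled by $C\,\mathbb{E}\big(\int_0^T |\u|^{2p-2}|\sigma_n|_{L_Q}^2\,\d t\big)^{1/2}$, which after a further Young-type split $\le \frac{1}{2}\mathbb{E}\sup_t|\u(t)|^p + C\,\mathbb{E}\int_0^T|\u|^{p-2}|\sigma_n|^2_{L_Q}\,\d t$ is absorbed on the left; the Poisson supremum term is handled analogously. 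Plugging in the already-proven uniform bound on $\sup_t\mathbb{E}|\u(t)|^p$ on the right closes the estimate. I expect the main obstacle to be the bookkeeping for the jump terms: ensuring that the decomposition of $|a+b|^p$ produces exactly the $|\u|^{p-2}|g|^2$ and $|g|^p$ structure matched by (H.2) and Assumption \ref{assum1}, and verifying that Lemma \ref{bdg} and Kunita's inequality apply with the correct exponents so that every resulting term is either absorbable into the left-hand dissipation/supremum or controllable by the zeroth-order Gr\"{o}nwall bound — with all constants kept independent of the Galerkin level $n$ and (for bounded $\e$) of $\e$.
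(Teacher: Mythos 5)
Your proposal is correct and follows essentially the same route as the paper: It\^{o}'s formula applied to $|x|^p$ with a localizing stopping time, the cancellation $(B(u,u),u)=0$, Young's inequality on the forcing and trace terms, Burkholder--Davis--Gundy for the Wiener martingale, Kunita's inequality (Lemma \ref{lemma100}) and the moment bound of Lemma \ref{bdg} for the jump terms, then Hypothesis (H.2), Assumption \ref{assum1} and Gr\"{o}nwall. Your two-pass organization (first $\sup_t\mathbb{E}$, then $\mathbb{E}\sup_t$) and the explicit Taylor estimate for the compensator correction are only cosmetic variants of the paper's single-pass argument, which lumps the compensator term together with the compensated Poisson integral and bounds both at once via Kunita's inequality.
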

\begin{proof}
Define
$\tau_N=\inf\left\{t:|\u(t)|^p+\int_0^t\|\u(s)\|^2|\u(s)|^{p-2}\d
s>N\right\}.$ Let us take the function $f(x)=|x|^p$ and apply the
It\^{o}'s lemma to the process $\u(t)$. Use the property of the
operators $A$ and $B$, apply Cauchy-Schwartz inequality and Young's
inequality (Remark \ref{young}) to the term
$|\u(s)|^{p-2}\left(f(s),\u(s)\right)$ to obtain,
\begin{align}\label{eqtn102}
&|\u(\t)|^p+\frac{p\nu}{2}\int_0^{\t}\|\u(s)\|^2|\u(s)|^{p-2}\d
s\nonumber\\&\leq
|u(0)|^p+\frac{p}{2\nu}\int_0^{\t}\|f(s)\|^p_{V'}\d
s+C_1(p,\nu)\int_0^{\t}|\u(s)|^p\d
s\nonumber\\&\;+p\int_0^{\t}|\u(s)|^{p-2}\left(\sigma(s,\u(s)),\u(s)\right)\d
W_n(s)\nonumber\\&\;+\frac{p(p-1)}{2}\int_0^{\t}|\u(s)|^{p-2}\textrm{Tr}(\sigma(s,\u(s))
Q\sigma(s,\u(s)))\d s\nonumber\\&\;+\int_0^{\t}\int_Z\left[|\u(s-)+g(\u(s-),z)|^p-|\u(s-)|^p\right]\tilde{N}(ds,dz)\nonumber\\
&\qquad+\int_0^{\t}\int_Z\left[|\u(s-)+g(\u(s-),z)|^p-|\u(s-)|^p\right.\nonumber\\
&\qquad\qquad\qquad\left.-p|\u(s-)|^{p-2}\left(g(\u(s-),z),\u(s-)\right)\right]\lambda(dz)ds,
\end{align}
where
$C_1(p,\nu)=\left(\frac{p-2}{2\nu}\right)\left(\frac{2}{p}\right)^{2/(p-2)}$.
Let us denote the last two terms on RHS by $I$. Consider
$\frac{p(p-1)}{2}\int_0^{\t}|\u(s)|^{p-2}\textrm{Tr}(\sigma(s,\u(s))
Q\sigma(s,\u(s)))\d s$ in (\ref{eqtn102}), apply Young's inequality
and $|\u|\leq \|\u\|$ to obtain,
\begin{align}\label{eqtn103}
&\frac{p(p-1)}{2}\int_0^{\t}|\u(s)|^{p-2}\textrm{Tr}(\sigma(s,\u(s))Q\sigma(s,\u(s)))\d
s\nonumber\\&\leq
\frac{p\nu}{4}\int_0^{\t}\|\u(s)\|^2|\u(s)|^{p-2}\d
s+C_2(p,\nu)\int_0^{\t}|\sigma(s,\u(s))|^p\d s,
\end{align}
where
$C_2(p,\nu)=(p-1)^{p/2}\left(\frac{2(p-2)}{p\nu}\right)^{\frac{p-2}{2}}$.
Now applying (\ref{eqtn103}) in (\ref{eqtn102}), taking supremum up
to time $\T$, and then taking the expectation, one can get,
\begin{eqnarray}\label{eqtn105}
&&\mathbb{E}\left[\sup_{0\leq t\leq
\T}|\u(t)|^p+\frac{p\nu}{4}\int_0^{\T}\|\u(t)\|^2|\u(t)|^{p-2}\d
t\right]\nonumber\\&&\leq\mathbb{E}\left[|u(0)|^p\right]+\frac{p}{2\nu}\int_0^{\t}\|f(s)\|^2_{V'}\d
s+C_1(p,\nu)\mathbb{E}\left[\int_0^{\t}\sup_{0\leq s\leq
t}|\u(s)|^p\d
s\right]\nonumber\\&&\quad+p\mathbb{E}\left[\sup_{0\leq t\leq
\T}\left|\int_0^{t}|\u(s)|^{p-2}\left(\sigma(s,\u(s)),\u(s)\right)\d
W_n(s)\right|\right]\nonumber\\&&\quad+C_2(p,\nu)
\mathbb{E}\left(\int_0^{\T}|\sigma(s,\u(s))|^p\d
s\right)+\mathbb{E}\left[\sup_{0\leq t\leq \T}I\right].
\end{eqnarray}
Let us consider  $p\mathbb{E}\left[\sup_{0\leq t\leq
\T}\left|\int_0^{t}|\u(s)|^{p-2}\left(\sigma(s,\u(s)),\u(s)\right)\d
W_n(s)\right|\right]$ from (\ref{eqtn105}) and apply
Burkholder-Davis-Gundy inequality, Young's inequality and
H\"{o}lder's inequality to get,
\begin{eqnarray}\label{eqtn108}
&&p\mathbb{E}\left[\sup_{0\leq t\leq
\T}\left|\int_0^t|\u(s)|^{p-2}\left(\sigma(s,\u(s),\u(s))\right)\d
W_n(s)\right|\right]\\&&\leq\frac{1}{2}\mathbb{E}\left[\sup_{0\leq
t\leq
\T}|\u(t)|^p\right]+(2(p-1))^{p-1}T^{\frac{p-2}{2}}\mathbb{E}\int_0^{\T}|\sigma(s,\u(s))|^p\d
s.\nonumber
\end{eqnarray}
Apply Kunita's inequality (see Lemma \ref{lemma100}) by taking $b=0$
and $\sigma=0$, we have,
\begin{eqnarray}\label{eqtn110}
\mathbb{E}\left[\sup_{0\leq t\leq \T}|I(t)|\right]&\leq&
C_3(p,T)\left\{\mathbb{E}\int_0^{\T}\left(\int_Z|g(\u(s-),z)|^2\lambda(dz)\right)^{p/2}\d
s\right.\nonumber\\&&\left.\quad+\mathbb{E}\left[\int_0^{\T}\int_Z|g(\u(s-),z)|^p\lambda(dz)\d
s\right]\right\}.
\end{eqnarray}
Thus we can write (\ref{eqtn105}) as,
\begin{eqnarray}\label{eqtn111}
&&\frac{1}{2}\mathbb{E}\left[\sup_{0\leq t\leq
\T}|\u(t)|^p\right]+\frac{p\nu}{4}\mathbb{E}\left(\int_0^{\T}\|\u(s)\|^2|\u(s)|^{p-2}\d
s\right)\nonumber\\&&\quad\leq\mathbb{E}\left[|u(0)|^p\right]+\frac{p}{2\nu}\int_0^{\t}\|f(s)\|^2_{V'}\d
s+C_1(p,\nu)\mathbb{E}\int_0^{\t}\sup_{0\leq s\leq t}|\u(s)|^p\d
s\nonumber\\&&\quad\quad+C_4(p,T,\nu)\mathbb{E}\left(\int_0^{\T}|\sigma(s,\u(s))|^p\d
s\right)\nonumber\\&&\quad\quad+C_3(p,T)\left\{\mathbb{E}\left[\int_0^{\T}\left(\int_Z|g(\u(s-),z)|^2\lambda(dz)\right)^{p/2}\d
s\right]\right.\nonumber\\&&\quad\quad\left.+\mathbb{E}\left[\int_0^{\T}\int_Z|g(\u(s-),z)|^p\lambda(dz)\d
s\right]\right\},
\end{eqnarray}
where $C_4(p,T,\nu)=C_2(p,\nu)+(2(p-1))^{p-1}T^{\frac{p-2}{2}}.$ Let
us take the last three terms of the above inequality and apply
Hypothesis \ref{hyp} and Assumption \ref{assum1} to get,
\begin{eqnarray}\label{eqtn114}
&&\frac{1}{2}\mathbb{E}\left[\sup_{0\leq t\leq
\T}|\u(t)|^p\right]+\frac{p\nu}{4}\mathbb{E}\left(\int_0^{\T}\|\u(s)\|^2|\u(s)|^{p-2}\d
s\right)\nonumber\\&&\quad
\leq\mathbb{E}\left[|u(0)|^p\right]+\frac{p}{2\nu}\int_0^{\t}\|f(s)\|^2_{V'}\d
s+\left[C(K,K_1,p,\nu,T)\right]T\nonumber\\
&&\qquad+\left[C(K,K_1,p,\nu,T)\right]\mathbb{E}\left(\int_0^{\T}\sup_{0\leq
s\leq t}|\u(s)|^p\d t\right).
\end{eqnarray}
Note that $\T\rightarrow T$ a.s. as $N\rightarrow \infty$. Finally
taking the limit in the above estimate (\ref{eqtn114}) and apply
Gronwall's inequality to get the result.
\end{proof}

\begin{definition}
A strong solution $\ue$ of the stochastic GOY model is defined on a
given probability space $(\Omega, \mathcal{F}, \mathcal{F}_{t},
\mathbb{P})$ as a $\mathrm{L}^p\left(\Omega;\mathrm{L}^{\infty
}(0,T; H)\cap \mathcal{D}(0,T;
H)\right)\cap\mathrm{L}^2\left(\Omega;\mathrm{L}^2(0,T;V)\right)$
valued adapted process which satisfies
\begin{align}\label{15a}
&\d \ue + \big[\nu A\ue + B(\ue, \ue)\big] \d t = f(t) \d t +
\sqrt{\e}\sigma(t, \ue) \d W(t) +
\varepsilon\int_Zg(\ue,z)\tilde{N}(dt,dz)
\end{align}
$\ue(0) = u_0$, in the weak sense and also the energy inequality in
Theorem \ref{thm99}.
\end{definition}


\begin{theorem}\label{existence}
Let  $u (0)$ be $\mathcal{F}_0$ measurable and $\ \mathbb{E}|u_0|^2
< \infty.$ Let $f\in \mathrm{L}^p(0, T; V^{\prime}).$ We also assume
that $0 < \e < \frac{\nu}{L}$ and the diffusion coefficient
satisfies the conditions (H.1)-(H.3). Then there exists unique
adapted process $\ue(t, x, w)$ with the regularity
$$\ue \in \mathrm{L}^p\left(\Omega; \mathcal{D}(0, T; H)\right) \cap \mathrm{L}^2\left(\Omega;\mathrm{L}^2(0, T; V)\right)$$
satisfying the stochastic GOY model \eqref{15a} and the a priori
bounds in Theorem \ref{thm99}.
\end{theorem}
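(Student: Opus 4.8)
The plan is to build $\ue$ as the limit of the Galerkin approximations $\u$ and to identify the quadratic term by means of the local monotonicity recorded in Lemma \ref{Mon}, following the locally monotone scheme (as in \cite{Ma4}). For each $n$ the finite system \eqref{variational} is an SDE in $H_n$ with a Wiener part and a compensated--Poisson part whose coefficients are, by (H.1)--(H.3) and the smoothness of $B$ on $H_n$, locally Lipschitz of linear growth; hence it has a unique $H_n$-valued c\`adl\`ag strong solution $\u$. Theorems \ref{energy} and \ref{thm99} give bounds on $\u$ uniform in $n$: boundedness in $\mathrm{L}^p(\Omega;\mathrm{L}^\infty(0,T;H))$ and in $\mathrm{L}^2(\Omega;\mathrm{L}^2(0,T;V))$, whence $B(\u,\u)$ is bounded in $\mathrm{L}^2(\Omega\times(0,T);V')$ by Lemma \ref{Bprop1}(ii), and $\sigma_n(\cdot,\u)$, $g^n(\u,\cdot)$ are bounded in the corresponding $\mathrm{L}^2$ spaces by (H.2).

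\textbf{Passage to the limit.} By reflexivity and Banach--Alaoglu, along a subsequence one has $\u\rightharpoonup\ue$ in $\mathrm{L}^2(\Omega\times(0,T);V)$ and weak-$*$ in $\mathrm{L}^p(\Omega;\mathrm{L}^\infty(0,T;H))$, $B(\u,\u)\rightharpoonup F_1$ in $\mathrm{L}^2(\Omega\times(0,T);V')$, $\sigma_n(\cdot,\u)\rightharpoonup S$, $g^n(\u,\cdot)\rightharpoonup G$ weakly, and the corresponding Wiener and compensated--Poisson integrals converge weakly in $\mathrm{L}^2$. Testing \eqref{variational} against a fixed basis vector and letting $n\to\infty$ shows that $\ue$ satisfies $\d\ue=[-\nu A\ue+F_1+f]\,\d t+\sqrt{\e}\,S\,\d W+\e\int_Z G\,\tilde N(\d t,\d z)$, $\ue(0)=u_0$, in the weak sense. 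It remains to identify $F_1=-B(\ue,\ue)$, $S=\sigma(\cdot,\ue)$ and $G=g(\ue,\cdot)$.

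\textbf{Identification of the nonlinearity (main obstacle).} Since $F(u)=-\nu Au-B(u,u)$ is monotone only inside the $l^4$-balls $\mathbb B_r$, I would localise with the stopping times $\tau_r$ at which the processes involved first leave $\mathbb B_r$ and use the exponential weight $e^{-(r^4/\nu^3)t}$ to absorb the term $\frac{r^4}{\nu^3}|w|^2$ in \eqref{monotone}. Applying the It\^o formula for the square of the $H$-norm to $e^{-(r^4/\nu^3)t}|\u(t)|^2$ and to $e^{-(r^4/\nu^3)t}|\ue(t)|^2$, taking expectations so that the martingale terms vanish, using $(B(u,v),v)=0$ from \eqref{1}, and passing to the limit via weak lower semicontinuity of the $\mathrm{L}^2(\Omega\times(0,T);V)$-norm together with the weak convergence of the noise terms, one arrives, for every adapted $v$ with values in $\mathbb B_r$, at
\[
\mathbb{E}\int_0^{T\wedge\tau_r} e^{-(r^4/\nu^3)s}\left(F_1+\nu Av+B(v,v)+\tfrac{r^4}{\nu^3}(\ue-v),\,\ue-v\right)\d s\ \ge\ 0 .
\]
A (suitably localised) Minty--Browder argument --- put $v=\ue-\delta\varphi$, divide by $\delta>0$, let $\delta\downarrow0$ --- then yields $F_1=-B(\ue,\ue)$ on $[0,T\wedge\tau_r]$; since $r>0$ is arbitrary and $\tau_r\uparrow T$ by the a priori bounds, this holds on $[0,T]$. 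The same monotonicity estimate promotes $\u\rightharpoonup\ue$ to strong convergence in $\mathrm{L}^2(\Omega\times(0,T);H)$, and then (H.1), (H.3) give $S=\sigma(\cdot,\ue)$ and $G=g(\ue,\cdot)$; I expect this identification step, i.e.\ handling the quadratic term with merely local monotonicity in the presence of the jump integral, to be the crux of the proof.

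\textbf{Uniqueness and regularity.} For two solutions $\ue_1,\ue_2$ with the stated regularity, put $w=\ue_1-\ue_2$; expanding $B(\ue_1,\ue_1)-B(\ue_2,\ue_2)$ by Lemma \ref{Bprop2}, using \eqref{1}, and applying Lemma \ref{Mon} with $u=\ue_1$, $v=\ue_2$ (the latter localised to $\mathbb B_r$ by a stopping time $\tau_r$), It\^o's formula for $|w(t)|^2$ gives, with the noise contribution absorbed thanks to (H.3) and the smallness of $\e$,
\[
\mathbb{E}\,|w(t\wedge\tau_r)|^2\ \le\ C_r\int_0^t \mathbb{E}\,|w(s\wedge\tau_r)|^2\,\d s ,
\]
so Gronwall's inequality forces $w\equiv0$ on $[0,T\wedge\tau_r]$, and $r\to\infty$ (again $\tau_r\uparrow T$) yields pathwise uniqueness on $[0,T]$. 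Finally the a priori bounds \eqref{90} pass to $\ue$ by weak lower semicontinuity, giving the asserted $\mathrm{L}^p(\Omega;\mathcal D(0,T;H))\cap\mathrm{L}^2(\Omega;\mathrm{L}^2(0,T;V))$ regularity and the energy inequality, while the c\`adl\`ag-in-$H$ property is read off from the integrated equation, each term on the right being c\`adl\`ag in $H$.
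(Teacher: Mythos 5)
Your proposal follows essentially the same route as the paper, which simply invokes the local monotonicity of Lemma \ref{Mon} together with the energy estimates of Theorems \ref{energy} and \ref{thm99} and refers to Theorem 4.4 of \cite{Ma4}, where exactly this Galerkin/Minty--Browder scheme with an exponential weight and a Gronwall-based pathwise uniqueness argument is carried out. Your sketch is a correct (and more detailed) rendering of that argument, the only soft spot being the last sentence: the c\`adl\`ag-in-$H$ regularity does not follow merely from reading off the integrated equation (whose drift terms are only $V'$-valued) but requires the standard $V$--$V'$ interpolation argument adapted to the jump terms.
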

\begin{proof}
The theorem can be proved using the local monotonicity property and
the energy estimates. A version of the theorem with
$$\ue \in \mathrm{L}^2(\Omega; \mathcal{D}(0, T; H) \cap \mathrm{L}^2(0, T; V))$$
has been proved in Theorem 4.4 of \cite{Ma4}.
\end{proof}

\section{Large Deviation Principle}
\setcounter{equation}{0} In this section we first give an abstract
formulation and basic properties for a class of large deviation
problems and then prove the Main Theorem \ref{MTH}.

Let us denote by $X$ a complete separable metric space and
$\{\mathbb{P}_{\varepsilon}: \varepsilon
> 0\}$ a family of probability measures on the Borel subsets of $X$.

\begin{definition}
A function $I : X\rightarrow [0, \infty]$ is called a \emph{rate
function} if $I$ is lower semicontinuous. A rate function $I$ is
called a \emph{good rate function} if for arbitrary $M \in [0,
\infty)$, the level set $K_M = \{x: I(x)\leq M\}$ is compact in $X$.
\end{definition}
\begin{definition}\label{LDP}(Large Deviation Principle)
 We say that a family of probability measures $\{\mathbb{P}_{\varepsilon}: \varepsilon > 0\}$
 satisfies the \emph{large deviation principle} (LDP) with a good rate
 function $I$ satisfying,
\begin{enumerate}
\item[(i)] for each closed set $F\subset X$
$$ \limsup_{\varepsilon\rightarrow 0}\ \varepsilon\ \log \mathbb{P}_{\varepsilon}(F) \leq -\inf_{x\in F} I(x),$$
\item[(ii)] for each open set $G\subset X$
$$ \liminf_{\varepsilon\rightarrow 0}\ \varepsilon\ \log \mathbb{P}_{\varepsilon}(G) \geq -\inf_{x\in G} I(x).$$
\end{enumerate}
\end{definition}

\begin{lemma}\label{VL}(Varadhan's Lemma \cite{Va})
Let $E$ be a Polish space and $\{X^{\varepsilon}: \varepsilon > 0\}$
be a family of $E$-valued random elements satisfying LDP with rate
function $I$. Then $\{X^{\varepsilon}: \varepsilon > 0\}$ satisfies
the Laplace principle on $E$ with the same rate function $I$ if for
all $h \in C_{b}(E)$,
\begin{equation}
\lim_{\varepsilon \rightarrow 0} {\varepsilon }\log
\mathbb{E}\left\{\exp\left[-
\frac{1}{\varepsilon}h(X^{\varepsilon})\right]\right\} = -\inf_{x
\in E} \{h(x) + I(x)\}. \label{LP}
\end{equation}
\end{lemma}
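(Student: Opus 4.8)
The statement to prove is Varadhan's Lemma: given that $\{X^{\varepsilon}\}$ satisfies the LDP with good rate function $I$, one deduces the Laplace principle, i.e., identity \eqref{LP} for every $h\in C_b(E)$. I would prove the two matching bounds (``$\limsup\le$'' and ``$\liminf\ge$'') separately, since the LDP decomposes into an upper bound on closed sets and a lower bound on open sets, and each feeds naturally into one half of the Laplace limit.

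For the \emph{lower bound} $\liminf_{\varepsilon\to0}\varepsilon\log\mathbb{E}\exp[-\tfrac1\varepsilon h(X^\varepsilon)]\ge -\inf_{x}\{h(x)+I(x)\}$: fix $x_0\in E$ with $I(x_0)<\infty$ (if no such point exists the rate function side is $+\infty$ after negation and there is nothing to prove in one direction — handled by taking suprema) and fix $\delta>0$. Choose an open ball $G=B(x_0,\rho)$ on which $h(y)\le h(x_0)+\delta$ by continuity of $h$. Then
$$
\mathbb{E}\exp\!\left[-\tfrac1\varepsilon h(X^\varepsilon)\right]\ge \exp\!\left[-\tfrac1\varepsilon(h(x_0)+\delta)\right]\mathbb{P}_\varepsilon(X^\varepsilon\in G),
$$
so taking $\varepsilon\log$, using the open-set LDP bound, and letting $\delta\downarrow0$, $\rho\downarrow0$ gives $\liminf\ge -(h(x_0)+I(x_0))$. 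Taking the supremum over such $x_0$ yields the claim.

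For the \emph{upper bound} $\limsup_{\varepsilon\to0}\varepsilon\log\mathbb{E}\exp[-\tfrac1\varepsilon h(X^\varepsilon)]\le -\inf_{x}\{h(x)+I(x)\}$: this is the harder half and is where the \emph{good} rate function (compact level sets) and boundedness of $h$ are essential. Since $h$ is bounded, say $\|h\|_\infty=:c$, the relevant mass concentrates where $I$ is small. Fix $M>0$. On the compact level set $K_M=\{I\le M\}$, cover it by finitely many balls $B(x_i,\rho_i)$ with $h\ge h(x_i)-\delta$ on $B(x_i,\rho_i)$; on the complement of $\bigcup_i B(x_i,\rho_i)$ — which is contained in a closed set where $I\ge M$ (after shrinking, using lower semicontinuity and compactness, a standard covering argument) — the LDP upper bound controls the probability by $e^{-(M-\delta)/\varepsilon}$. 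Splitting the expectation over this finite cover plus the remainder, bounding $e^{-h/\varepsilon}\le e^{c/\varepsilon}$ on the remainder, and using the elementary inequality $\varepsilon\log(\sum_{k=1}^{m}a_k)\le \varepsilon\log m+\max_k\varepsilon\log a_k$, we get
$$
\limsup_{\varepsilon\to0}\varepsilon\log\mathbb{E}\exp\!\left[-\tfrac1\varepsilon h(X^\varepsilon)\right]\le \max\Big\{\max_i\big(-(h(x_i)-\delta)-I_{B(x_i,\rho_i)}\big),\; c-(M-\delta)\Big\},
$$
where $I_A=\inf_{x\in A}I(x)$. Letting $\delta\downarrow0$ and $\rho_i\downarrow0$ the first term is bounded by $-\inf_x\{h(x)+I(x)\}$ (using lower semicontinuity of $h+I$), and letting $M\to\infty$ kills the second term. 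This gives the upper bound.

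\textbf{Main obstacle.} The delicate point is the upper-bound half: one must argue that outside a finite union of small balls covering $K_M$, the variable $X^\varepsilon$ lands in a closed set on which $I\ge M$ (or $\ge M-\delta$), so that the LDP closed-set estimate applies with the right exponent. This requires a careful compactness/lower-semicontinuity argument — essentially, if not, one could extract a sequence in the complement of the balls with $I\le M$, contradicting that $K_M$ is already covered — and one has to track the $\delta$'s so the losses vanish in the limit. The boundedness of $h$ is used crucially here to absorb the contribution from the ``bad'' set; without it (as in Bryc's converse, invoked elsewhere in the paper) extra tail conditions would be needed. Everything else is routine: continuity of $h$ for the local oscillation control, and the $\varepsilon\log$ of a finite sum being the max up to a vanishing $\varepsilon\log m$ term.
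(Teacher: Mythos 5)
Your argument is correct, but note that the paper itself offers no proof of this lemma: it is quoted verbatim from Varadhan \cite{Va} (see also Dembo and Zeitouni \cite{DZ}) as a known black box, alongside Bryc's converse, so there is nothing in the text to compare against. What you have written is the standard textbook proof of Varadhan's integral lemma for bounded continuous $h$: the lower bound from the open-set LDP estimate on a small ball around a near-optimal point, and the upper bound by covering the compact level set $K_M$ with finitely many balls on which $h$ oscillates by at most $\delta$, applying the closed-set estimate to each piece and to the closed remainder $E\setminus\bigcup_i B(x_i,\rho_i)$ (which is disjoint from $K_M$, hence $I\ge M$ there), and using boundedness of $h$ together with $M\to\infty$ to kill the remainder. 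Two small points of hygiene: the LDP upper bound applies to closed sets, so you should run the estimate on the closures $\overline{B(x_i,\rho_i)}$ and, \emph{before} extracting the finite subcover, fix each $\rho_i$ small enough that both $h\ge h(x_i)-\delta$ and (by lower semicontinuity of $I$) $\inf_{\overline{B(x_i,\rho_i)}}I\ge I(x_i)-\delta$ hold; the order of quantifiers matters because the finite cover depends on the radii, so "letting $\rho_i\downarrow 0$" after taking the limsup is not quite legitimate as phrased, though the fix is routine. Also your $c-(M-\delta)$ should simply be $c-M$, which is harmless. With those adjustments the proof is complete and is exactly the argument the cited references give.
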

\begin{lemma}\label{BL}(Bryc's Lemma \cite{DZ})
The Laplace principle implies the LDP with the same rate function.
More precisely, if $\{X^{\varepsilon}: \varepsilon > 0\}$ satisfies
the Laplace principle on the Polish space $E$ with the rate function
$I$ and if the limit
\begin{equation*}
\lim_{\varepsilon \rightarrow 0} {\varepsilon }\log
\mathbb{E}\left\{\exp\left[-
\frac{1}{\varepsilon}h(X^{\varepsilon})\right]\right\} = -\inf_{x
\in E} \{h(x) + I(x)\}
\end{equation*}
is valid for all $h \in C_{b}(E)$, then $\{X^{\varepsilon}:
\varepsilon > 0\}$ satisfies the LDP on $E$ with rate function $I$.
\end{lemma}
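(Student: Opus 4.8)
The statement is the classical inverse of Varadhan's lemma (Bryc's lemma), and my plan is to recover the two bounds of Definition \ref{LDP} directly from the assumed convergence $\varepsilon\log\mathbb{E}\{\exp[-h(X^\varepsilon)/\varepsilon]\}\to-\inf_{x\in E}\{h(x)+I(x)\}$, valid for every $h\in C_b(E)$, by feeding into it a family of well-chosen bounded continuous ``test penalties.'' Throughout I will use that $I$ is a good rate function — which it is in our application, since it is built from the control representation of Budhiraja--Dupuis--Maroulas and inherits compact level sets — because goodness is exactly what is needed to close the upper bound. Write $\mathbb{P}_\varepsilon(\cdot)=\mathbb{P}(X^\varepsilon\in\cdot)$ and note that $\mathbb{P}_\varepsilon(\cdot)\le1$ gives $\varepsilon\log\mathbb{P}_\varepsilon(\cdot)\le0$ for free.

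For the lower bound, I would fix an open set $G$, a point $x_0\in G$ with $I(x_0)<\infty$, and $\eta>0$. Pick $\delta>0$ with the ball $B(x_0,\delta)\subset G$, pick $M>I(x_0)+\eta$, and take $h_M(x)=M\,\big(1\wedge d(x,x_0)/\delta\big)\in C_b(E)$. Since $h_M(x_0)=0$ one has $\inf_x\{h_M(x)+I(x)\}\le I(x_0)$, so the hypothesis gives $\mathbb{E}\{\exp[-h_M(X^\varepsilon)/\varepsilon]\}\ge\exp[-(I(x_0)+\eta)/\varepsilon]$ for $\varepsilon$ small. On the other hand $\exp[-h_M/\varepsilon]\le\mathbf{1}_{B(x_0,\delta)}+\exp[-M/\varepsilon]$ pointwise, hence $\mathbb{E}\{\exp[-h_M(X^\varepsilon)/\varepsilon]\}\le\mathbb{P}_\varepsilon(B(x_0,\delta))+\exp[-M/\varepsilon]$. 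Because $M>I(x_0)+\eta$, for small $\varepsilon$ the term $\exp[-M/\varepsilon]$ is at most half of $\exp[-(I(x_0)+\eta)/\varepsilon]$, so $\mathbb{P}_\varepsilon(G)\ge\mathbb{P}_\varepsilon(B(x_0,\delta))\ge\frac12\exp[-(I(x_0)+\eta)/\varepsilon]$. Taking $\varepsilon\log$, then $\liminf$, then $\eta\downarrow0$, and finally the supremum over $x_0\in G$, yields $\liminf_{\varepsilon\to0}\varepsilon\log\mathbb{P}_\varepsilon(G)\ge-\inf_{x\in G}I(x)$.

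For the upper bound, fix a closed set $F$ and set $L_0=\inf_F I$, treating first the case $0<L_0<\infty$ (the case $L_0=0$ is the trivial bound above, and $L_0=\infty$ follows by letting $N\to\infty$ below). Given $\eta>0$, choose $N>L_0$ and then, using goodness of $I$, choose $\delta>0$ so small that $a_\delta:=\inf\{I(x):d(x,F)\le\delta\}\ge L_0-\eta$; this is the one place where compactness of level sets is essential — if $a_\delta\not\to L_0$, one extracts along a vanishing sequence of $\delta$'s minimizers lying in a fixed compact level set, passes to a limit point, which lies in $F$ since $F$ is closed, and contradicts lower semicontinuity of $I$. Now take $h(x)=-N\,(1-d(x,F)/\delta)^+\in C_b(E)$, which equals $-N$ on $F$ and $0$ outside the $\delta$-neighborhood of $F$; then $\mathbb{E}\{\exp[-h(X^\varepsilon)/\varepsilon]\}\ge e^{N/\varepsilon}\,\mathbb{P}_\varepsilon(F)$, while splitting the infimum over $\{d(\cdot,F)\le\delta\}$ and its complement gives $\inf_x\{h(x)+I(x)\}\ge\min(0,a_\delta-N)=a_\delta-N\ge L_0-\eta-N$. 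Inserting the hypothesis into $N+\varepsilon\log\mathbb{P}_\varepsilon(F)\le\varepsilon\log\mathbb{E}\{\exp[-h(X^\varepsilon)/\varepsilon]\}$ yields $\limsup_{\varepsilon\to0}\varepsilon\log\mathbb{P}_\varepsilon(F)\le-L_0+\eta$, and $\eta\downarrow0$ finishes. The main obstacle is precisely this upper-bound step: without exponential tightness the exponential average $\mathbb{E}\{\exp[-h(X^\varepsilon)/\varepsilon]\}$ is dominated by the bulk of the law, so one has no direct handle on $\mathbb{P}_\varepsilon(F)$; the fix is to make the penalty $h$ as negative as $-N$ on $F$ (so $e^{-h/\varepsilon}$ sees $F$ with weight $e^{N/\varepsilon}$) while keeping $\inf_x\{h+I\}$ bounded below, and the latter is possible only through the neighborhood-infimum convergence $a_\delta\to\inf_F I$, which is where goodness of $I$ enters in an essential way.
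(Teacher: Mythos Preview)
Your argument is correct and is essentially the standard proof of Bryc's inverse Varadhan lemma (e.g.\ Dembo--Zeitouni, Theorem~4.4.13, or Dupuis--Ellis, Theorem~1.2.3). Both halves are handled in the usual way: approximate $\mathbf{1}_{B(x_0,\delta)^c}$ from below by a continuous plateau for the lower bound, and approximate $-N\mathbf{1}_F$ from above by a continuous tent for the upper bound, then read off the LDP bounds from the Laplace asymptotics. Your explicit acknowledgment that goodness of $I$ is needed to ensure $a_\delta\uparrow\inf_F I$ as $\delta\downarrow0$ is exactly right and is the only subtle point.

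However, the paper does \emph{not} prove this lemma at all: it is stated as a quotation of a classical result and attributed to \cite{DZ}, with no proof given. In the paper's architecture, Lemma~\ref{VL} and Lemma~\ref{BL} together serve only to record the equivalence of the Laplace principle and the LDP on Polish spaces, so that the authors may invoke Theorem~\ref{main} (Budhiraja--Dupuis--Maroulas) to obtain the Laplace principle and then conclude the LDP. So there is nothing to compare: you have supplied a complete (and correct) proof where the paper simply cites the literature.
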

Note that, Varadhan's Lemma together with Bryc's converse of
Varadhan's Lemma state that for Polish space valued random elements,
the Laplace principle and the large deviation principle are
equivalent.

We will now define the function spaces required for the formulation
of the large deviation problem. These spaces are defined based on
the theory developed in Budhiraja, Dupuis and Maroulas \cite{BD2}.

Let $X$ be a locally compact Polish space and let $X_T =[0,T]\times
X$ for any $T\in (0,\infty)$. Let $\mathcal{M}(X)$ be the space of
all measures $\mu$ on $(X,\mathcal{B}(X))$, satisfying
$\mu(K)<\infty$ for every compact subset $K$ of $X$. We endow
$\mathcal{M}(X)$ with the weakest topology such that for every $f\in
C_c(X)$ the function $\mu \rightarrow
\left<f,\mu\right>=\int_Xf(x)\mu(dx)$, $\mu\in \mathcal{M}(X)$ is a
continuous function. This topology can be metrized such that
$\mathcal{M}(X)$ is a Polish space. Let
$\mathbb{M}=\mathcal{M}(X_T)$ and let $\mathbb{P}$ be the unique
probability measure on $(\mathbb{M},\mathcal{B}(\mathbb{M}))$. Then
$\mathcal{B}(\mathbb{M})$ will denote a Borel $\sigma$-field on the
space $\mathcal{M}(X)$. For more details see Budhiraja, Dupuis and
Maroulas \cite{BD2}. Let us denote the product space
$C([0,T];H)\times \mathbb{M}$ by $\mathbb{V}(H)$. Define
$\mathcal{G}_t=\sigma \big\{N(s,Z), W(s):0\leq s\leq t, Z\in
\mathcal{B}(X_T)\big\}.$

For $\theta >0$, define $\mathbb{P}_\theta$ the unique probability
measure on $(\mathbb{V}(H),\mathcal{B}(\mathbb{V}(H)))$ such that
under $\mathbb{P}_\theta$
\begin{itemize}
\item [(i)] $W(t)$ is an $H$-valued $Q$-Wiener process.
\item [(ii)] $N$ is a Poisson Random Measure with intensity
measure $\lambda_T$.
\item [(iii)] $\{W(t),t\in [0,T]\}$, $\{N(t,Z), t\in
[0,T]\}$ are $\mathcal{G}_t$ martingales for every $Z\in
\mathcal{B}(X_T)$.
\end{itemize}

Now let us consider the $\mathbb{P}$-completion of the filtration
$\{\mathcal{G}_t\}$ and denote it by $\{\mathcal{F}_t\}$. We denote
by $\mathcal{P}$ the predictable $\sigma$-field on $[0,T]\times
\mathbb{V}(H)$ with the filtration $\{\mathcal{F}_t :0\leq t\leq
T\}$ on $(\mathbb{V}(H),\mathcal{B}(\mathbb{V}(H)))$. Let
$\mathcal{A}$ be the class of all $(\mathcal{P}\otimes
\mathcal{B}(X))/\mathcal{B}[0,\infty)$ measurable maps $\phi :
X_T\times \mathbb{V}\rightarrow [0,\infty)$. For $\phi \in
\mathcal{A}$, define the counting process $N^\phi$ on $X_T$ as
follows,
$$N^\phi\left(t,Z\right)=\int_{(0,t]\times Z}\int_0^\infty 1_{[0,\phi(s,z)]}(r)\tilde{N}(ds,dz)\d r,\quad t\in
[0,T],\quad Z\in \mathcal{B}(X).$$ Here $N^\phi$ is to be thought of
as a controlled random measure, with $\phi$ selecting the intensity
for the points of location $z$ and time $s$, in a possibly random
but non-anticipating way. Let us define $\ell:[0,\infty)\rightarrow
[0,\infty)$ by $\ell(r)=r\log r-r+1,\quad r\in [0,\infty).$ For any
$\phi \in \mathcal{A}$, let us define $L_T(\phi)$ by
$$L_T(\phi)=\int_0^T\int_Z
\ell(\phi(t,z,\omega))\lambda(dz)dt.$$ Define $\mathcal{P}_2 \equiv
\left\{\psi :\psi \;\mathrm{is}\;
\mathcal{P}/\mathcal{B}(\mathbb{R}) \; \mathrm{measurable}\;
\mathrm{and}\; \int_0^T\|\psi(s)\|_0^2 ds <\infty\; \mathbb{P}-
\mathrm{a.s}\; \right\}$ and let us set
$\mathcal{U}(H)=\mathcal{P}_2\times \mathcal{A}.$ For $\psi\in
\mathcal{P}_2$ let us define $\tilde{L}(\psi)$ by
$$\tilde{L}_T(\psi) =\frac{1}{2}\int_0^T \|\psi(s)\|_0^2 ds,$$ and for
$u=(\psi,\phi)\in \mathcal{U},$ set
$\bar{L}_T(u)=L_T(\phi)+\tilde{L}_T(\psi).$ For $\psi\in
\mathcal{P}_2$, let $W^\psi$ be
$$W^\psi(t)=W(t)+\int_0^t\psi(s)ds,\;t\in [0,T].$$
Let us define for $N\in\mathbb{N}$, $\tilde{S}^N(H_0) =
\left\{\psi\in \mathrm{L}^2([0,T]:H_0): \tilde{L}_T(\psi)\leq
N\right\}.$ Also let us define $S^N=\left\{\phi:X_T\rightarrow
[0,\infty) :L_T(\phi)\leq N\right\}.$ The convergence in
$\mathbb{M}$ is essentially equivalent to weak convergence on
compact subsets. The super linear growth of $\ell$ implies that
$\left\{\lambda_T^g:g\in S^N\right\}$ is a compact subset of
$\mathbb{M}$ where $$\lambda_T^g =\int_0^T\int_Z g(s,z)\lambda(dz)
\d s,\; Z\in \mathcal{B}(X_T).$$ Throughout we consider the topology
on $S^N$  obtained through this identification which makes $S^N$ a
compact space. We set $\bar{S}^N=\tilde{S}^N(H_0)\times S^N$ with
the usual product topology. Also let
$\mathcal{U}=\mathcal{P}_2(H)\times \mathcal{A}$ and set
$\mathbb{S}=\bigcup_{N\geq 1} \bar{S}^N$ and let $\mathcal{U}^N$ be
the space of all $\bar{S}^N$- valued controls, $\mathcal{U}^N
=\left\{u=(\psi,\phi)\in \mathcal{U}:u(\omega)\in \bar{S}^N,\;
\mathbb{P}\;\mathrm{a.e}\; \omega\right\}.$

Let $X$ and $X_0$ denote Polish spaces and for $\varepsilon>0$ let
$\mathcal{G}^\varepsilon : X_0\times \mathbb{V}(H)\rightarrow X$ be
a measurable map. Define
$$u^\varepsilon =\mathcal{G}^\varepsilon \left(\sqrt{\varepsilon}W(\cdot),\varepsilon N^{\varepsilon^{-1}}\right).$$
We are interested in the large deviation principle for $u^{\e}$ as
$\e \to 0$.

\begin{assumption}\label{assum}
There exists a measurable map $\mathcal{G}^0:X_0\times
\mathbb{V}(H)\rightarrow  X$ such that the following hold:
\begin{itemize}
\item [(i)] Let $\left\{\theta^\e =(\psi^\e,\phi^\e) \in \mathcal{U},
\theta ^\e (\omega)\in \bar{S}^M, \mathbb{P}-\;\mathrm{a.e.}\;
\omega\right\}\subset \mathcal{U}^M$ for $M<\infty$,  $\theta^\e$
converges in distribution on $\bar{S}^M$-valued random elements
$\theta =(\psi,\phi)$. Then $$\mathcal{G}^\e
\left(\sqrt{\e}W(\cdot)+\int_0^\cdot\psi^\e (s)ds,\e
N^{\e^{-1}\phi^{\e}}\right)\longrightarrow
\mathcal{G}^0\left(\int_0^\cdot\psi(s)ds,\lambda_T^\phi\right).$$
\item [(ii)] For every $M < \infty$, the set
$$K_M = \left\{\mathcal{G}^0\left(\int_0^\cdot\psi(s)ds,\lambda_T^\phi\right):(\phi,\psi)\in
\mathcal{U}^M\right\}$$ is a compact subset of $X$.
\end{itemize}
\end{assumption}

\noindent For each $\zeta \in X$, define $
\mathbb{S}_\zeta=\left\{(\psi,\phi)\in
\mathbb{S}:\zeta=\mathcal{G}^0\left(\int_0^\cdot\psi(s)ds,\lambda_T^\phi\right)\right\}.$
Let $I:X\rightarrow [0,\infty]$ be defined as
\begin{align}\label{rate}
I(\zeta) =\inf_{q=(\psi,\phi)\in
\mathbb{S}_\zeta}\left\{\int_0^T\int_Z\ell(\phi(t,z))\lambda(dz)\d
t+\frac{1}{2}\int_0^T\|\psi(s)\|_0^2ds\right\},
\end{align}
where infimum over an empty set is taken as $\infty$. Also here
$Z\subset \mathcal{B}( X)$.

We now state an important result by Budhiraja, Dupuis and Maroulas
\cite{BD2} (see Theorem 4.2 of \cite{BD2}).
\begin{theorem}\label{main}
Let $u^\varepsilon =\mathcal{G}^\varepsilon
\left(\sqrt{\varepsilon}W(\cdot),\varepsilon
N^{\varepsilon^{-1}}\right)$. If $\{\mathcal{G}^{\e}\}$ satisfies
the Assumption \ref{assum}, then the family $\{u^{\e}: \e > 0\}$
satisfies the Laplace principle in $X$ with rate function $I$ given
by \eqref{rate}.
\end{theorem}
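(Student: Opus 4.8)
The statement is Theorem 4.2 of Budhiraja, Dupuis and Maroulas \cite{BD2}, so the natural plan is to reproduce the abstract weak--convergence argument of that paper; here is the sketch. The engine is the variational (Bou\'e--Dupuis type) representation for exponential functionals of a Brownian motion together with a Poisson random measure: for every $h\in C_b(X)$,
\begin{equation*}
-\e\log\mathbb{E}\left[\exp\left(-\tfrac{1}{\e}\,h(u^\e)\right)\right]
=\inf_{(\psi,\phi)\in\mathcal{U}}\mathbb{E}\left[\bar{L}_T(\psi,\phi)+h\!\left(\mathcal{G}^\e\!\left(\sqrt{\e}\,W^{\psi},\e N^{\e^{-1}\phi}\right)\right)\right],
\end{equation*}
where $\bar{L}_T(\psi,\phi)=\tilde{L}_T(\psi)+L_T(\phi)$ is the total control cost introduced above. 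Since by Lemma \ref{VL} and Lemma \ref{BL} the Laplace principle \eqref{LP} (with $I$ given by \eqref{rate}) is equivalent to the LDP, it suffices to show that the right--hand side converges to $\inf_{x\in X}\{h(x)+I(x)\}$, and to check that $I$ is a good rate function.

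For the upper Laplace bound (equivalently the LDP lower bound) I would, given $\delta>0$, choose $x_0\in X$ with $h(x_0)+I(x_0)\le\inf_{x}\{h(x)+I(x)\}+\delta$ and a near--optimal pair $(\psi,\phi)\in\mathbb{S}_{x_0}$, so that $\bar{L}_T(\psi,\phi)\le I(x_0)+\delta$ and $\mathcal{G}^0(\int_0^\cdot\psi\,ds,\lambda_T^\phi)=x_0$. Using this \emph{deterministic} pair as a control in the representation gives
\begin{equation*}
-\e\log\mathbb{E}\left[e^{-h(u^\e)/\e}\right]\le \bar{L}_T(\psi,\phi)+\mathbb{E}\,h\!\left(\mathcal{G}^\e\!\left(\sqrt{\e}\,W^{\psi},\e N^{\e^{-1}\phi}\right)\right),
\end{equation*}
and by Assumption \ref{assum}(i) the argument of $h$ converges in distribution to $x_0$; boundedness and continuity of $h$ then yield $\limsup_{\e\to0}\le h(x_0)+I(x_0)+\delta\le\inf_{x}\{h(x)+I(x)\}+2\delta$, and $\delta\downarrow0$ finishes this half.

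For the lower Laplace bound (the LDP upper bound) I would take, for each $\e$, controls $(\psi^\e,\phi^\e)\in\mathcal{U}$ that are $\e$--nearly optimal in the representation; since $h$ is bounded, a standard truncation argument shows only controls of cost $\le M$ matter, i.e. one may assume $(\psi^\e,\phi^\e)\in\mathcal{U}^M$ for some finite $M$. The space $\bar{S}^M=\tilde{S}^M(H_0)\times S^M$ is compact, so along a subsequence $(\psi^\e,\phi^\e)\to(\psi,\phi)$ in distribution, and by Assumption \ref{assum}(i) the associated solutions converge in distribution to $\mathcal{G}^0(\int_0^\cdot\psi\,ds,\lambda_T^\phi)$. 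Combining weak convergence, the continuity of $h$, and the lower semicontinuity of $(\psi,\phi)\mapsto\bar{L}_T(\psi,\phi)$ (which comes, via Fatou, from convexity of $\ell$ and of $\|\cdot\|_0^2$), one obtains
\begin{equation*}
\liminf_{\e\to0}\Big(-\e\log\mathbb{E}\big[e^{-h(u^\e)/\e}\big]\Big)\ge\mathbb{E}\left[h\!\left(\mathcal{G}^0\big(\textstyle\int_0^\cdot\psi\,ds,\lambda_T^\phi\big)\right)+\bar{L}_T(\psi,\phi)\right]\ge\inf_{x\in X}\{h(x)+I(x)\},
\end{equation*}
the last inequality because pointwise $h(\mathcal{G}^0(\cdots))+\bar{L}_T(\psi,\phi)\ge h(\mathcal{G}^0(\cdots))+I(\mathcal{G}^0(\cdots))\ge\inf_x\{h(x)+I(x)\}$. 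Finally, that $I$ is a good rate function follows from Assumption \ref{assum}(ii): $\{I\le M\}\subset K_M$ is relatively compact, and lower semicontinuity of $I$ follows from compactness of $\bar{S}^M$ together with lower semicontinuity of $\bar{L}_T$. The genuinely hard part of this programme is not the abstract argument above but the verification of Assumption \ref{assum}(i) for the shell model --- proving tightness of the controlled (stochastic and skeleton) processes and identifying the weak limit as the solution of the skeleton equation, in the Polish space $X=\mathcal{D}([0,T];H)\cap\mathrm{L}^2(0,T;V)$; for the present theorem this is taken as a hypothesis, and the verification is the business of the next section.
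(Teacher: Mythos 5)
Your sketch is a faithful reproduction of the standard Budhiraja--Dupuis--Maroulas argument (variational representation, deterministic near-optimal controls for the upper Laplace bound, tightness of $\e$-optimal controls in the compact set $\bar{S}^M$ plus lower semicontinuity of $\bar{L}_T$ for the lower bound, and Assumption \ref{assum}(ii) for goodness of $I$), which is exactly the proof the paper relies on: the paper does not prove this theorem itself but cites it verbatim as Theorem 4.2 of \cite{BD2}. So your proposal is correct and takes essentially the same route as the paper's (cited) source; you are also right that the substantive work in this paper is the verification of Assumption \ref{assum} for the shell model, carried out in the subsequent theorems.
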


\begin{remark}\label{rem1}
\item{1.} Notice that, since the underlying space $X$ is Polish,
the family $\{u^{\e}: \e > 0\}$ satisfies the LDP in $X$ with the
same rate function $I$.

\item{2.} Assumption \ref{assum}(i) is a statement on the weak convergence of a certain
 family of random variables and is at the core of weak convergence approach
  to the study of large deviations. Assumption \ref{assum}(ii) essentially says that the
  level sets of the rate function are compact.
\end{remark}

\begin{remark}
The stochastic GOY model in consideration,
\begin{align*}
&\d \ue + \big[\nu A\ue + B(\ue)\big] \d t = f(t) \d t +
\sqrt{\e}\sigma(t, \ue) \d W(t)+\e \int_Zg(\ue,z)\tilde{N}(dt,dz)
\end{align*}
$\ue(0) = \xi$ has a unique strong solution in the Polish space
$X=\mathcal{D}([0, T]; H) \cap \mathrm{L}^2(0, T; V)$. The solution
to the stochastic GOY model, denoted by $\ue$, can be written as
$\mathcal{G}^{\e}\left(\sqrt{\e}W(\cdot),\e N^{\e^{-1}}\right)$ for
a Borel measurable function $\mathcal{G}^{\e}: \mathcal{D}([0, T];
H)\rightarrow X$ (see Karatzas and Shreve \cite{KS}, page 310;
Vishik and Fursikov \cite{VF}, Chapter X, Corollary 4.2). For more
details about this formulation see Chapter IV (classical
Yamada-Watababe argument) of Ikeda and Watanabe \cite{IW} and
Section 3.2 of Budhiraja, Chen and Dupuis \cite{BCD}.
\end{remark}
The aim of this section is to verify that such a $\mathcal{G}^{\e}$
satisfies Assumption \ref{assum}. Then applying the Theorem
\ref{main} the LDP for $\{\ue : \e>0\}$ in $X$ can be established.

The LDP for $\{\ue : \e>0\}$ in $X$ have been proved here
systematically in four steps. In the first and second Theorems we
show the well posedness of certain controlled stochastic and
controlled deterministic equations in $X$. These results help to
prove the last two main Theorems on the compactness of the level
sets and weak convergence of the stochastic control equation stated
in Assumption \ref{assum}.

\begin{theorem}\label{scontrol}
For any $\theta \in \mathcal{U}^M$, $0 < M < \infty$, the stochastic
control equation
\begin{align}
&\d \uv(t) + \left[\nu A\uv(t) + B(\uv(t), \uv(t))\right]\d t
\nonumber\\&\qquad \qquad\qquad =\left[f(t) + \sigma(t,
\uv(t))\psi(t)+\int_Zg(\uv(t),z)\ell(\phi(t,z))\lambda(dz)\right]\d t\nonumber\\
&\qquad \qquad\qquad\quad+\sqrt{\e}\sigma(t, \uv(t)) \d W(t)+\e
\int_Zg(\uv(t),z)\tilde{N}(dt,dz),\label{11}
\end{align}
$\uv(0) = \xi\in H$ has a unique strong solution in
$\mathrm{L}^2(\Omega; X)$, where $X=\mathcal{D}(0, T; H) \cap
\mathrm{L}^2(0, T; V)$, $f \in \mathrm{L}^2(0, T; V^{\prime})$ and
$\sigma, \int_Zg(\cdot,z)\lambda(dz)$ will satisfy the hypotheses
H.1.--H.3. in Section 2.
\end{theorem}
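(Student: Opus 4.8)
The plan is to adapt the Faedo--Galerkin scheme combined with the local monotonicity technique already used for the uncontrolled equation (Theorem \ref{existence} and \cite[Theorem~4.4]{Ma4}), the genuinely new feature being the two control drift terms $\sigma(t,\uv)\psi(t)$ and $\int_Z g(\uv,z)\ell(\phi(t,z))\lambda(dz)$ attached to the control $\theta=(\psi,\phi)\in\mathcal U^M$. First I would project \eqref{11} onto $H_n$ by $P_n$ and solve the resulting finite--dimensional system: its coefficients are locally Lipschitz in the state ($B$ is quadratic, $\sigma$ and $g$ are Lipschitz by (H.3), and for fixed $\psi(t)$ and $\phi(t,\cdot)$ the two control terms are Lipschitz in the state with a time--dependent constant that is integrable on $[0,T]$, since $\int_0^T\norm{\psi(s)}_0^2\,\d s\le 2M$ and $\int_0^T\int_Z\ell(\phi(t,z))\lambda(dz)\,\d t\le M$). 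Hence the $n$--th system has a unique local strong solution $\u_\theta\in\mathcal D(0,T;H_n)$, which is extended to $[0,T]$ once the a priori bound below is in force.

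\textbf{Uniform energy estimates.} Applying It\^o's formula to $\abs{\u_\theta(t)}^2$ up to the stopping time $\T$ (as in Theorem \ref{thm99}), using the coercivity $(\nu A\u_\theta,\u_\theta)=\nu\norm{\u_\theta}^2$ and the cancellation \eqref{1}, the linear--growth bound (H.2) for $\sigma$ and $g$, Young's inequality (Remark \ref{young}) applied to $(f,\u_\theta)$, $(\sigma(t,\u_\theta)\psi(t),\u_\theta)$ and $\int_Z(g(\u_\theta,z),\u_\theta)\ell(\phi(t,z))\lambda(dz)$, followed by the Burkholder--Davis--Gundy inequality for the Wiener martingale and Lemma \ref{lemma100} (or Lemma \ref{bdg}) for the compensated Poisson martingale, and finally Gronwall's lemma together with the control bounds $\int_0^T\norm{\psi}_0^2\,\d s\le 2M$ and $\int_0^T\int_Z\ell(\phi)\lambda(dz)\,\d t\le M$, I would obtain
\begin{equation*}
\sup_n\,\mathbb E\Big[\sup_{0\le t\le T}\abs{\u_\theta(t)}^2+\nu\int_0^T\norm{\u_\theta(t)}^2\,\d t\Big]\le C\big(M,\nu,T,\abs{\xi}^2,\int_0^T\norm{f(t)}_{V'}^2\,\d t\big).
\end{equation*}
The step I expect to be the main obstacle is precisely the handling of the $\ell(\phi)$--weighted drift $\int_Z g(\u_\theta,z)\ell(\phi(t,z))\lambda(dz)$: unlike the Wiener control term it cannot be dominated by a bare linear--growth/Lipschitz estimate, so one has to combine the $\mathbb H^2_\lambda$--integrability of $g$ and (H.2) with the superlinearity of $\ell(r)=r\log r-r+1$ and the constraint $L_T(\phi)\le M$ in order to keep the bound uniform in $n$.

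\textbf{Passage to the limit.} From the uniform bound, along a subsequence $\u_\theta\rightharpoonup\uv$ weakly in $\mathrm L^2(\Omega\times(0,T);V)$ and weak-$*$ in $\mathrm L^2(\Omega;\mathrm L^\infty(0,T;H))$, while $A\u_\theta$, $B(\u_\theta,\u_\theta)$, the stochastic integrands and the two control drifts converge weakly to limits. To identify $B(\uv,\uv)$ as the weak limit of $B(\u_\theta,\u_\theta)$ I would run the Minty--Browder argument localized to the $l^4$--ball $\mathbb B_r$: introduce the stopping times $\tau_N$, use the local monotonicity of the pair $\big(F,\sqrt\e\sigma+\e\int_Z g(\cdot,z)\lambda(dz)\big)$ in $\mathbb B_r$ from Lemma \ref{Mon} (the two additional control terms being Lipschitz in the state by (H.3), hence absorbed after enlarging the exponential weight), and pass to the limit exactly as in \cite[Theorem~4.4]{Ma4}. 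This shows that $\uv$ is a strong solution of \eqref{11} with paths in $X=\mathcal D(0,T;H)\cap\mathrm L^2(0,T;V)$ and, by the energy estimate, $\uv\in\mathrm L^2(\Omega;X)$.

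\textbf{Uniqueness.} If $u,v$ are two strong solutions of \eqref{11} and $w=u-v$, I would apply It\^o's formula to $\abs{w(t)}^2\exp\!\big(-\int_0^t\rho(s)\,\d s\big)$ with $\rho(s)=\tfrac{C}{\nu^3}\big(\norm{u(s)}_{l^4}^4+\norm{v(s)}_{l^4}^4\big)+C\norm{\psi(s)}_0^2+C$, which lies in $\mathrm L^1(0,T)$ almost surely by Lemma \ref{La} and the energy estimate. Expanding $B(u,u)-B(v,v)$ via Lemma \ref{Bprop2}, invoking the local monotonicity of Lemma \ref{Mon} together with the Lipschitz bound (H.3) for the difference of the control terms, and taking expectations, all contributions are dominated, so that $\mathbb E\big[\abs{w(t)}^2\exp(-\int_0^t\rho(s)\,\d s)\big]\le 0$; hence $u=v$ in $\mathcal D(0,T;H)$, which also yields equality in $\mathrm L^2(0,T;V)$.
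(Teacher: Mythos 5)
Your proposal follows essentially the same route as the paper: the paper's own proof of this theorem consists only of the remark that the energy estimate \eqref{12} is derived as in Theorem \ref{thm99} and that existence and uniqueness then follow from Theorem \ref{existence} and \cite{Ma4} ``after a few modifications'' for the control terms, which is precisely the Galerkin/local-monotonicity program you spell out in more detail. The one step you honestly flag as the main obstacle --- absorbing the $\ell(\phi)$-weighted jump drift into the Gronwall argument --- is indeed the only genuinely new difficulty, and the paper does not resolve it here either (in the analogous estimate inside Theorem \ref{compact} it is handled by invoking $\sup_{t}\sup_{z}|\ell(\phi(t,z))|$, i.e.\ by an implicit boundedness assumption on the control rather than by the superlinearity of $\ell$ alone).
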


\begin{proof}
Following the proof of Theorem \ref{thm99}, we can prove that if
$\uv (t)$ is a strong solution of the stochastic controlled equation
\eqref{11}, the following energy estimate holds:
\begin{align}\label{12}
\mathbb{E}\left(\sup_{0\leq t\leq T} |\uv (t)|^2 + \nu\int_0^T \|\uv
(t)\|^2 \d t\right) \leq C,
\end{align}
where $C = C\left(|\xi|^2, \int_0^T \|f\|^2_{V^{\prime}} \d t, \nu,
K, T, M\right)$ is a positive constant.

The proofs of the existence and uniqueness of the strong solution of
the stochastic controlled equation (\ref{11}) follow from Theorem
\ref{existence}. The proofs can be obtained from Manna and Mohan
\cite{Ma4} after a few modifications as needed due to the presence
of the control term. The energy estimate obtained in (\ref{12})
plays a crucial role in the proof of the existence and uniqueness
theorems.
\end{proof}

\begin{corollary}\label{re11}
Since $V\subset H$, $|u|\leq \|u\|$, from (\ref{12}), we have,
$$\mathbb{E}\left(\sup_{0\leq t\leq T} |\uv(t)|^2 +\nu\int_0^T|\uv(t)|^2\d t\right)\leq
C.$$
\end{corollary}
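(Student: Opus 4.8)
The plan is to read off the estimate directly from the energy bound \eqref{12} of Theorem~\ref{scontrol} together with the continuous inclusion $V\subset H$. In the present sequence-space framework the latter is nothing but the pointwise inequality $|u|\le\|u\|$ for every $u\in V$ (recall $|u|^2=\sum_n|u_n|^2$ while $\|u\|^2=\sum_n k_n^2|u_n|^2$ with $k_n=k_02^n$), which is precisely the inequality recorded in the statement of the corollary. First I would note that, by the regularity $\uv\in\mathrm{L}^2(\Omega;\mathrm{L}^2(0,T;V))$ furnished by Theorem~\ref{scontrol}, for $\mathbb{P}$-a.e.\ $\omega$ one has $\uv(t)\in V$ for a.e.\ $t\in[0,T]$, so that $|\uv(t)|^2\le\|\uv(t)\|^2$ holds for a.e.\ $(t,\omega)$. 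Integrating this inequality in time gives the pathwise bound
$$\nu\int_0^T|\uv(t)|^2\,\d t\le\nu\int_0^T\|\uv(t)\|^2\,\d t,$$
and, since the running-supremum term $\sup_{0\le t\le T}|\uv(t)|^2$ is common to both sides, this yields
$$\sup_{0\le t\le T}|\uv(t)|^2+\nu\int_0^T|\uv(t)|^2\,\d t\le\sup_{0\le t\le T}|\uv(t)|^2+\nu\int_0^T\|\uv(t)\|^2\,\d t.$$

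Taking expectations on both sides and invoking \eqref{12} then bounds the right-hand side by the constant $C=C\big(|\xi|^2,\int_0^T\|f\|^2_{V^{\prime}}\,\d t,\nu,K,T,M\big)$, which is exactly the claimed inequality. There is no genuine obstacle here; the only points to keep in mind are that the inequality $|u|\le\|u\|$ is applied inside the time integral (and not to the supremum term, where it is neither needed nor sharper), and that the constant $C$ is simply the one already produced in \eqref{12}, so that no new estimation is required.
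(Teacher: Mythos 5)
Your argument is correct and is exactly the paper's own (one-line) justification: apply the pointwise inequality $|u|\leq\|u\|$ inside the time integral of the energy estimate \eqref{12} and take expectations. The extra care you take about $\uv(t)\in V$ for a.e.\ $(t,\omega)$ is harmless but not needed beyond what the regularity in Theorem~\ref{scontrol} already provides.
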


\begin{theorem}\label{dcontrol}
Let $\theta=(\psi,\phi)\in \mathcal{U}$ ; $f \in \mathrm{L}^2(0, T;
V^{\prime})$ and $\sigma$, $g$ satisfy the hypotheses H.1.--H.3. in
Section 2. Then the equation
\begin{align}\label{18}
\d u_\theta(t) + [\nu Au_\theta(t)+ B(u_\theta(t), u_\theta(t))]\d t
&= f(t)\d t + \sigma(t, u_\theta(t)) \psi(t) \d
t\nonumber\\&\quad+\int_Zg(u_\theta(t),z)\ell(\phi(t,z))\lambda(dz)\d
t,
\end{align}
where $u_\theta(0)=\xi\in H$, has a unique strong solution in
$X=\mathcal{D}(0, T; H) \cap \mathrm{L}^2(0, T; V)$.
\end{theorem}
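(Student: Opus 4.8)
The plan is to prove well-posedness of the deterministic controlled equation \eqref{18} by viewing it as the zero-noise version of the stochastic controlled equation \eqref{11} and re-running the energy estimates and the local monotonicity argument in a purely deterministic setting. First I would set up a Galerkin approximation: project onto $H_n$ as in Section 2, obtaining a finite system of ODEs for $u_\theta^n = P_n u_\theta$ with the control terms $\sigma(t,u_\theta^n)\psi(t)$ and $\int_Z g(u_\theta^n,z)\ell(\phi(t,z))\lambda(dz)$ added to the forcing. Since $\theta=(\psi,\phi)\in\mathcal{U}$, we have $\int_0^T\|\psi(s)\|_0^2\,ds<\infty$ and $L_T(\phi)<\infty$, so the control terms are integrable in time; local Lipschitz continuity of $B$, $\sigma$, $g$ gives local existence of the Galerkin solutions, and the a priori bound below gives global existence.

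The second step is the energy estimate. Taking the $H$-inner product of the Galerkin equation with $u_\theta^n(t)$, using the orthogonality property \eqref{1} (so the trilinear term drops), and $(\nu A u, u) = \nu\|u\|^2$, I would obtain
\begin{align*}
\frac{1}{2}\frac{d}{dt}|u_\theta^n(t)|^2 + \nu\|u_\theta^n(t)\|^2
&= (f(t), u_\theta^n(t)) + (\sigma(t,u_\theta^n(t))\psi(t), u_\theta^n(t))\\
&\quad + \int_Z (g(u_\theta^n(t),z), u_\theta^n(t))\,\ell(\phi(t,z))\lambda(dz).
\end{align*}
The first term is handled by $\|f\|_{V'}\|u_\theta^n\| \leq \frac{\nu}{4}\|u_\theta^n\|^2 + \frac{1}{\nu}\|f\|_{V'}^2$. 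For the $\psi$ term I would use Cauchy--Schwarz and (H.2): $|(\sigma(t,u)\psi,u)| \leq |\sigma(t,u)|_{L_Q}\|\psi\|_0\,|u| \leq K^{1/2}(1+|u|^2)^{1/2}\|\psi\|_0|u| \leq C\|\psi\|_0(1+|u|^2)$. For the $g$ term, Cauchy--Schwarz in $z$ together with (H.2) gives $\int_Z |(g(u,z),u)|\ell(\phi)\lambda(dz) \leq \big(\int_Z|g(u,z)|^2\lambda(dz)\big)^{1/2}|u|\big(\int_Z \ell(\phi)^2\lambda(dz)\big)^{1/2}$; one then splits $\ell(\phi)$ into its values near $1$ and away — more simply, invoke the standard inequality $xy \le e^x + \ell(y)$ (which is exactly the role $\ell$ plays in this theory), or bound $\int_Z \ell(\phi)\lambda(dz) = L_T(\phi)$-density in time, to get a bound of the form $C(1+|u|^2)h(t)$ with $h\in L^1(0,T)$. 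Collecting terms yields $\frac{d}{dt}|u_\theta^n(t)|^2 + \nu\|u_\theta^n(t)\|^2 \leq C(1+|u_\theta^n(t)|^2)(1+\|\psi(t)\|_0^2 + h(t)) + C\|f(t)\|_{V'}^2$, and Gronwall's inequality (using $\int_0^T(\|\psi\|_0^2+h)\,dt<\infty$) gives the uniform bound
\begin{align*}
\sup_{0\leq t\leq T}|u_\theta^n(t)|^2 + \nu\int_0^T\|u_\theta^n(t)\|^2\,dt \leq C\big(|\xi|^2, \textstyle\int_0^T\|f\|_{V'}^2\,dt, \nu, K, T, \bar{L}_T(\theta)\big),
\end{align*}
independent of $n$. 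This gives weak-$*$ compactness of $\{u_\theta^n\}$ in $L^\infty(0,T;H)$ and weak compactness in $L^2(0,T;V)$, and from the equation a bound on $\frac{d}{dt}u_\theta^n$ in $L^2(0,T;V')$, hence (Aubin--Lions) strong convergence in $L^2(0,T;H)$ along a subsequence; passing to the limit in the weak formulation identifies the limit $u_\theta$ as a solution, and the $V'$-bound on the time derivative plus $u_\theta\in L^2(0,T;V)$ gives $u_\theta\in C([0,T];H)\subset\mathcal{D}([0,T];H)$.

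For uniqueness — which is the step I expect to be the main obstacle, since $B$ is only locally Lipschitz and there is no smoothing from noise to exploit — I would use the local monotonicity of Lemma \ref{Mon} (with $\e=0$). Given two solutions $u_\theta, v_\theta$ with the same data, set $w = u_\theta - v_\theta$; using Lemma \ref{Bprop2} to expand $B(u_\theta,u_\theta)-B(v_\theta,v_\theta)$ and testing the difference equation with $w$, the deterministic part $(F(u_\theta)-F(v_\theta),w)$ is controlled by Lemma \ref{Mon} by $\frac{r^4}{\nu^3}|w|^2$ on the $l^4$-ball $\mathbb{B}_r$, while the control terms contribute $\leq C(\|\psi(t)\|_0 + 1)(1 + \text{(density of }L_T(\phi)))|w|^2$ via (H.3). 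The one genuine difficulty is that Lemma \ref{Mon} requires one of the two solutions to lie in $\mathbb{B}_r$ for a fixed $r$; since we only control $\int_0^T\|v_\theta\|^2\,dt$ (and via Lemma \ref{La} a bound on $\int_0^T\|v_\theta\|_{l^4}^4\,dt$, not an $L^\infty$ bound), I would localize by a stopping-time-free argument: define $\tau_r = \inf\{t: \|v_\theta(t)\|_{l^4} > r\}$ (or rather work with the integrated $l^4$-norm and absorb it into the Gronwall factor), obtain $\frac{d}{dt}|w(t)|^2 \leq g(t)|w(t)|^2$ on $[0,\tau_r]$ with $g\in L^1$, conclude $w\equiv 0$ there, and then let $r\to\infty$ — since $\int_0^T\|v_\theta\|_{l^4}^4\,dt<\infty$ the exceptional set shrinks and $w\equiv 0$ on all of $[0,T]$. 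Alternatively, since this is precisely the deterministic specialization of the argument in \cite{Ma4}, I would simply remark that the proof of Theorem \ref{existence} (and hence of \cite[Theorem 4.4]{Ma4}) applies verbatim with $\sigma\equiv 0$, $g\equiv 0$ in the noise terms but the control terms retained, the only modification being the time-dependent (but $L^1$) coefficients in the Gronwall estimates coming from $\psi$ and $\ell(\phi)$.
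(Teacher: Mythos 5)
Your proposal is correct and follows essentially the same route the paper relies on: the paper disposes of Theorem \ref{dcontrol} in one line as the noise-free special case of Theorem \ref{scontrol}, which in turn defers to the Galerkin/energy-estimate/local-monotonicity machinery of Theorem \ref{existence} and \cite{Ma4} — exactly the argument you reconstruct in detail (and explicitly offer as your fallback in the final sentence). The extra care you take with the $\ell(\phi)$ weight and with localizing Lemma \ref{Mon} via the integrated $l^4$-norm is, if anything, more explicit than what the paper provides.
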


\begin{proof}
This result can be considered as a particular case of the previous
Theorem \ref{scontrol}, where the noise term or the diffusion
co-efficient is absent.
\end{proof}

Next we state an important lemma from Budhiraja and Dupuis
\cite{BD1}.

\begin{lemma}\label{BDL1}
Let $\{\psi_n\}$ be a sequence of elements from $\tilde{S}^M$ for
some finite $M > 0$. Let $\psi_n$ converges in distribution to
$\psi$ with respect to the weak topology on $\mathrm{L}^2(0, T;
H_0)$. Then $\int_0^{\cdot}\psi_n(s)\d s$ converges in distribution
as $C(0, T; H)$- valued processes to $\int_0^{\cdot}\psi(s)\d s$ as
$n \to \infty$.
\end{lemma}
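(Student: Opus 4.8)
The plan is to exploit the fact that on the bounded set $\tilde S^M$ the weak topology of $\mathrm L^2(0,T;H_0)$ is metrizable and, restricted to $\tilde S^M$, compact; hence by the Skorohod representation theorem one may, after passing to a new probability space, replace convergence in distribution by almost sure convergence. So first I would fix a sequence $\{\psi_n\}\subset\tilde S^M$ with $\psi_n\To\psi$ in distribution on $(\mathrm L^2(0,T;H_0),\text{weak})$, and invoke Skorohod's theorem to obtain random variables $\tilde\psi_n,\tilde\psi$ on some $(\tilde\Omega,\tilde{\mathcal F},\tilde{\mathbb P})$ with the same laws as $\psi_n,\psi$ and with $\tilde\psi_n\To\tilde\psi$ weakly in $\mathrm L^2(0,T;H_0)$ for $\tilde{\mathbb P}$-a.e.\ $\tilde\omega$. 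Note $\tilde\psi_n\in\tilde S^M$ a.s.\ since laws are preserved, so in particular $\sup_n\int_0^T\|\tilde\psi_n(s)\|_0^2\,ds\le 2M$.

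Next, for each fixed $\tilde\omega$ in the full-measure set I would show the deterministic claim: if $\psi_n\rightharpoonup\psi$ weakly in $\mathrm L^2(0,T;H_0)$ with $\int_0^T\|\psi_n(s)\|_0^2\,ds\le 2M$, then $\int_0^\cdot\psi_n(s)\,ds\to\int_0^\cdot\psi(s)\,ds$ in $C([0,T];H)$. Pointwise-in-$t$ convergence in $H$ is immediate: for fixed $t$ the map $\eta\mapsto\int_0^t\eta(s)\,ds$ is a bounded linear functional (tested against any $h\in H$, one pairs $\psi_n$ with $\mathbf 1_{[0,t]}h\in\mathrm L^2(0,T;H_0)$ using that the embedding $H_0\hookrightarrow H$ is Hilbert--Schmidt, so $\mathbf 1_{[0,t]}h$ indeed lies in $\mathrm L^2(0,T;H_0)$), hence weak convergence gives $\bigl(\int_0^t\psi_n,h\bigr)\to\bigl(\int_0^t\psi,h\bigr)$; since the family $\{\int_0^t\psi_n\}$ is bounded in $H_0$ hence precompact in $H$, the weak limit is the strong limit in $H$. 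Uniformity in $t$ then follows from equicontinuity: by Cauchy--Schwarz, $\bigl|\int_s^t\psi_n(r)\,dr\bigr|_{H}\le C\bigl|\int_s^t\psi_n(r)\,dr\bigr|_{H_0}\le C\,(2M)^{1/2}\,|t-s|^{1/2}$, uniformly in $n$, so $\{\int_0^\cdot\psi_n\}$ is equicontinuous and uniformly bounded in $C([0,T];H)$; by Arzel\`a--Ascoli it is precompact there, and pointwise convergence to $\int_0^\cdot\psi$ pins down the unique limit, giving convergence in the sup norm.

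Finally I would transfer back: the map $\Phi:\eta\mapsto\int_0^\cdot\eta(s)\,ds$ is, by the deterministic step, continuous from $\tilde S^M$ (with the metrized weak topology) into $C([0,T];H)$ on the relevant full-measure set, so $\Phi(\tilde\psi_n)\to\Phi(\tilde\psi)$ $\tilde{\mathbb P}$-a.s.\ in $C([0,T];H)$, hence in distribution; since laws are unchanged, $\int_0^\cdot\psi_n(s)\,ds\To\int_0^\cdot\psi(s)\,ds$ in distribution as $C([0,T];H)$-valued random elements. The main obstacle is the deterministic uniform (in $t$) upgrade of the weak convergence: one must combine the pointwise weak-to-strong improvement in $H$ (which uses the Hilbert--Schmidt embedding $H_0\hookrightarrow H$ to make indicators admissible test functions and the bounded-in-$H_0$ hence precompact-in-$H$ argument) with the $|t-s|^{1/2}$ equicontinuity bound and Arzel\`a--Ascoli; everything else is bookkeeping via Skorohod representation.
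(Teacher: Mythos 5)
The paper does not actually prove this lemma: it is quoted verbatim from Budhiraja and Dupuis \cite{BD1} with only a citation, so there is no in-paper argument to compare against. Your proof is essentially the standard one for this statement and is correct in substance: the compact (Hilbert--Schmidt) embedding $H_0\hookrightarrow H$ upgrades the pointwise-in-$t$ weak convergence in $H_0$ to strong convergence in $H$, the Cauchy--Schwarz bound $\bigl|\int_s^t\psi_n(r)\,dr\bigr|_{H_0}\le(2M)^{1/2}|t-s|^{1/2}$ coming from $\tilde L_T(\psi_n)\le M$ gives equicontinuity, Arzel\`a--Ascoli (whose pointwise relative-compactness hypothesis is supplied again by the compact embedding, as you note) yields uniform convergence, and Skorokhod representation plus the metrizability of the weak topology on the ball $\tilde S^M$ transfers everything back to convergence in distribution. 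One small slip: to identify the pointwise weak limit you test against $\mathbf 1_{[0,t]}h$ with $h\in H$ and claim this lies in $\mathrm L^2(0,T;H_0)$ "since the embedding is Hilbert--Schmidt"; that is not right, as a generic $h\in H$ need not belong to $H_0$. Either test directly against $\mathbf 1_{[0,t]}g$ with $g\in H_0$ (which already gives $\int_0^t\psi_n\rightharpoonup\int_0^t\psi$ weakly in $H_0$, after which the compact embedding does the rest), or replace $h$ by $Qh\in H_0$ and use $(\psi_n(s),h)_H=(\psi_n(s),Qh)_0$. A cosmetic remark: since the deterministic step shows $\eta\mapsto\int_0^\cdot\eta(s)\,ds$ is continuous from $(\tilde S^M,\text{weak})$ to $C([0,T];H)$, the continuous mapping theorem alone already gives the conclusion, so the Skorokhod detour, while harmless, is not needed.
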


Now we are ready to check the Assumptions \ref{assum}.
\begin{theorem}[Compactness]\label{compact}
Let $M < \infty$ be a fixed positive number and let $\xi\in H$ be
deterministic. Let
$$ K_M :=\left\{u_\theta \in \mathcal{D}(0, T; H) \cap \mathrm{L}^2(0, T; V); \theta\in
\mathcal{U}^M\right\},$$ where $u_\theta$ is the unique solution in
$X=\mathcal{D}(0, T; H) \cap \mathrm{L}^2(0, T; V)$ of the
deterministic controlled equation \eqref{18}, with $u_\theta(0) =
\xi\in H$. Then $K_M$ is compact in $X$.
\end{theorem}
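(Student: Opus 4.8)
The plan is to show that $K_M$ is both relatively compact and closed in $X = \mathcal{D}(0,T;H) \cap \mathrm{L}^2(0,T;V)$, which together give compactness. The natural strategy is to take an arbitrary sequence $\{u_{\theta_n}\}$ in $K_M$ with $\theta_n = (\psi_n,\phi_n) \in \mathcal{U}^M$, and extract a subsequence converging in $X$ to some $u_\theta \in K_M$. Since $\bar{S}^M = \tilde{S}^M(H_0) \times S^M$ is compact (the first factor by weak compactness of a bounded set in $\mathrm{L}^2(0,T;H_0)$, the second by the superlinear growth of $\ell$ as noted in the excerpt), we may pass to a subsequence so that $\psi_n \rightharpoonup \psi$ weakly in $\mathrm{L}^2(0,T;H_0)$ and $\lambda_T^{\phi_n} \to \lambda_T^\phi$ in $\mathbb{M}$, with $(\psi,\phi) \in \bar{S}^M$. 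The goal is then to identify the limit of $u_{\theta_n}$ with $u_\theta$, the solution of \eqref{18} associated to $\theta = (\psi,\phi)$.

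First I would establish uniform bounds: from the energy estimate in Theorem \ref{scontrol} (specialized to the deterministic equation \eqref{18} as in Theorem \ref{dcontrol}), the solutions $u_{\theta_n}$ are uniformly bounded in $\mathrm{L}^\infty(0,T;H) \cap \mathrm{L}^2(0,T;V)$, with a bound depending only on $|\xi|$, $\|f\|_{\mathrm{L}^2(0,T;V')}$, $\nu$, $K$, $T$ and $M$. Using equation \eqref{18} together with Lemma \ref{Bprop1}(ii)--(iii) to control $B(u_{\theta_n},u_{\theta_n})$ in $V'$, and Hypothesis \ref{hyp}, H.2, to control the $\sigma(\cdot,u_{\theta_n})\psi_n$ and $\int_Z g(u_{\theta_n},z)\ell(\phi_n(t,z))\lambda(dz)$ terms, one obtains a uniform bound on $\frac{d}{dt}u_{\theta_n}$ in $\mathrm{L}^1(0,T;V') + \mathrm{L}^2(0,T;V')$ or a fractional-in-time bound. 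By the Aubin--Lions--Simon compactness lemma, $\{u_{\theta_n}\}$ is then relatively compact in $\mathrm{L}^2(0,T;H)$, and along a further subsequence $u_{\theta_n} \to u$ strongly in $\mathrm{L}^2(0,T;H)$, weakly in $\mathrm{L}^2(0,T;V)$, and weak-$\star$ in $\mathrm{L}^\infty(0,T;H)$.

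Next I would pass to the limit in the weak formulation of \eqref{18}. The linear term $\nu A u_{\theta_n}$ passes by weak convergence in $\mathrm{L}^2(0,T;V)$. For the nonlinear term, strong convergence in $\mathrm{L}^2(0,T;H)$ combined with the estimates of Lemma \ref{Bprop1} lets one show $B(u_{\theta_n},u_{\theta_n}) \to B(u,u)$ in the appropriate weak sense (testing against $V$-valued functions and using bilinearity as in Lemma \ref{Bprop2}). For the control terms, the joint continuity of $\sigma$ (H.1) with the strong convergence $u_{\theta_n}\to u$ gives $\sigma(\cdot,u_{\theta_n}) \to \sigma(\cdot,u)$ strongly, and paired with $\psi_n \rightharpoonup \psi$ weakly in $\mathrm{L}^2(0,T;H_0)$ one gets $\int_0^\cdot \sigma(s,u_{\theta_n})\psi_n(s)\,ds \to \int_0^\cdot \sigma(s,u)\psi(s)\,ds$; for the Poisson-type control term one uses the convergence $\lambda_T^{\phi_n}\to\lambda_T^\phi$ in $\mathbb{M}$ together with the Lipschitz and growth conditions H.2--H.3 on $g$ and the fact that $g(u_{\theta_n},\cdot) \to g(u,\cdot)$. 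This identifies $u$ as the solution $u_\theta$ of \eqref{18} with control $\theta = (\psi,\phi) \in \bar{S}^M \subset \mathcal{U}^M$, so $u \in K_M$; finally I would upgrade convergence to $X$, i.e.\ include the $\mathcal{D}(0,T;H)$ (Skorokhod) topology, by showing the limit is in fact continuous into $H$ — which follows because \eqref{18} has no jump term, so $u_\theta \in C(0,T;H)$ — and using an energy-equality / lower-semicontinuity argument to promote weak convergence in $\mathrm{L}^2(0,T;V)$ plus convergence of norms to strong convergence in $C(0,T;H)$.

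The main obstacle I expect is the passage to the limit in the two control terms, and in particular handling the jump-control term $\int_Z g(u_{\theta_n},z)\ell(\phi_n(t,z))\lambda(dz)$: one must carefully exploit the topology on $S^M$ induced by the identification $\phi \mapsto \lambda_T^\phi$, splitting $g(u_{\theta_n},z)\ell(\phi_n) - g(u,z)\ell(\phi)$ into a piece controlled by $\|u_{\theta_n}-u\|$ via the Lipschitz bound H.3 and a piece $g(u,z)(\ell(\phi_n)-\ell(\phi))$ handled by the weak-type convergence $\lambda_T^{\phi_n}\to\lambda_T^\phi$ (an argument of the kind appearing in Budhiraja--Dupuis--Maroulas \cite{BD2}), with uniform integrability supplied by the superlinearity of $\ell$. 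A secondary technical point is establishing the time-regularity bound needed for Aubin--Lions in the presence of the $\ell(\phi_n)$ factor, which is only in $\mathrm{L}^1$ in time in general; this forces the use of the Simon-type compactness criterion rather than the classical Aubin--Lions lemma.
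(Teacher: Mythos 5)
Your strategy is viable in outline, but it is a genuinely different route from the paper's. The paper does not extract a convergent subsequence of $u_{\theta_n}$ by compactness and then identify its limit; it uses the weak compactness of $\mathcal{U}^M$ only to produce the limiting control $\theta$, names the candidate limit $u_\theta$ in advance by solving \eqref{18} with that control (Theorem \ref{dcontrol}), and then estimates the difference $w_{\theta_n}=u_{\theta_n}-u_\theta$ directly: the decomposition of Lemma \ref{Bprop2} with the cancellation $(B(u,v),v)=0$, the Lipschitz hypothesis H.3, Lemma \ref{BDL1} for the term $\int_0^t\big(\sigma(s,u_\theta(s))(\psi_n(s)-\psi(s)),w_{\theta_n}(s)\big)ds$, dominated convergence for the corresponding $\ell(\phi_n)-\ell(\phi)$ term, and Gronwall's inequality yield $\sup_{t}|w_{\theta_n}(t)|^2+\nu\int_0^T\|w_{\theta_n}(t)\|^2dt\to 0$ in one stroke. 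What that buys is exactly the step you leave least developed: convergence in the norm of $X$ itself. In your scheme, Aubin--Lions--Simon delivers only strong convergence in $\mathrm{L}^2(0,T;H)$ and weak convergence in $\mathrm{L}^2(0,T;V)$, and upgrading this to $\sup_t|u_{\theta_n}(t)-u(t)|\to 0$ and $\int_0^T\|u_{\theta_n}-u\|^2dt\to 0$ via energy identities and a Radon--Riesz argument (uniform-in-time convergence of norms, lower semicontinuity at each $t$) is a substantial piece of work whose cleanest execution is, in effect, the paper's Gronwall estimate on the difference. Your route is more robust in one respect --- it does not presuppose that the limit equation has a unique solution in order to name the candidate --- but it front-loads compactness machinery (time-regularity bounds for the $\mathrm{L}^1$-in-time control term, Simon's criterion) that the paper avoids entirely, and it defers the decisive strong-convergence step to a sketch.
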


\begin{proof}
Let us consider a sequence $\{\n\}$ in $K_M$, where $\n$ corresponds
to the solution of \eqref{18} with control $\theta_n \in
\mathcal{U}^M$ in place of $\theta$, i.e.
\begin{align}\label{19}
\d \n(t) &+ [\nu A\n(t) + B(\n(t), \n(t))]\d t=f(t)\d t + \sigma(t,
\n(t))\psi_n(t)\d t
\nonumber\\&\qquad\qquad\qquad\qquad\qquad\qquad+\int_Z
g(\n(t),z)\ell(\phi_n(t,z))\lambda(dz)\d t,
\end{align}
with $\n(0)=\xi\in H$. Then by weak compactness of $\mathcal{U}^M$,
there exists a subsequence of $\{\theta_n\}$, still denoted by
$\{\theta_n\}$, which converges weakly to $\theta\in \mathcal{U}^M$
in $\mathcal{U}$. We need to prove $\n \to u_\theta$ in $X$ as
$n\to\infty$, or in other words,
\begin{align}\label{20}
\sup_{0\leq t\leq T} |\n(t) - u_\theta(t)|^2 + \int_0^T \|\n(t) -
u_\theta(t)\|^2 \d t \longrightarrow 0,\;\;\textrm{ as
}\;\;n\to\infty.
\end{align}

According to the Theorem \ref{dcontrol}, $u_\theta$ is unique strong
solution in $X$ of the deterministic controlled equation \eqref{18}.
Hence it is obvious to note that, $u_\theta$ satisfies the following
a-priori estimate
\begin{align}\label{21}
\sup_{0\leq t\leq T} |u_\theta (t)|^2 + \int_0^T \|u_\theta (t)\|^2
\d t \leq C,
\end{align}
where $C = C\left(|\xi|^2, \int_0^T \|f\|^2_{V^{\prime}} \d t, \nu,
K, T, M\right)$ is a positive constant.

For the proof, we refer the Theorem \ref{scontrol}, where the
stochastic version of the above a priori estimate has been worked
out.

Let $\m = \n - u_\theta$. Then $\m$ satisfies the following
differential equation
\begin{align}\label{222}
&\d \m(t) +[\nu A\m(t)+B(\n(t), \n(t)) - B(u_\theta(t),
u_\theta(t))]\d
t\nonumber\\
&\quad = [\sigma(t, \n(t)) \psi_n(t) - \sigma(t, u_\theta(t))
\psi(t)]\d t\nonumber\\&\quad\quad+\int_Z
\left[g(\n(t),z)\ell(\phi_n(t,z))-g(u_\theta(t),z)\ell(\phi(t,z))\right]\lambda(dz)\d
t.
\end{align}
Let us multiply \eqref{222} by $\m(t)$ and then integrating from
$0\leq s \leq t$ to get,
\begin{align}
&|\m(t)|^2 + 2\nu\int_0^t\|\m(s)\|^2 \d s \nonumber\\&\quad+
2\int_0^t \big( B(\n(s), \n(s)) - B(u_\theta(s), u_\theta(s)),
\m(s)\big)\d s=I_1,\;\;\textrm{ where}\label{22} \\&I_1=2\int_0^t
\big(\sigma(s, \n(s)) \psi_n(s) - \sigma(s, u_\theta(s)) \psi(s),
\m(s)\big)\d s\nonumber\\&\quad+2\int_0^t\int_Z
\big(g(\n(s),z)\ell(\phi_n(s,z))-g(u_\theta(s),z)\ell(\phi(s,z)),\m(s)\big)\lambda(dz)\d
s.
\end{align}
But $B(\n, \n) - B(u_\theta, u_\theta)=B(u_\theta, \m) + B(\m,
u_\theta) + B(\m, \m)$, from Lemma \ref{Bprop2}. Using this, the
properties $(ii)$ and $(iv)$ of the bilinear operator $B$ given in
Lemma \ref{Bprop1} and using the inequality $2ab\leq \nu
a^2+\frac{1}{\nu}b^2$, one can obtain,
\begin{align}
 2\big| \big( B(\n(s)) - B(u_\theta(s)),
\m(s)\big)\big|\leq \nu\|\m(s)\|^2  + \frac{1}{\nu} |\m(s)|^2
|u_\theta(s)|^2.\label{23}
\end{align}
The term $I_1$ can be written as,
\begin{align}
|I_1| &\leq  2\int_0^t  \left|\big(\big(\sigma(s, \n(s)) - \sigma(s,
u_\theta(s))\big)\psi_n(s),
\m(s)\big)\right| \d s \nonumber\\
&\quad + 2\left|\int_0^t \big(\sigma(s, u_\theta(s)) (\psi_n(s)
-\psi(s)),\m(s)\big)\d
s\right|\nonumber\\&\quad+2\int_0^t\int_Z\left|\big((g(\n(s),z)-g(u_\theta(s),z))\ell(\phi_n(s,z)),\m(s)\big)\right|\lambda(dz)\d
s\nonumber\\&\quad+2\left|\int_0^t\int_Z\big(g(u_\theta(s),z)(\ell(\phi_n(s,z))-\ell(\phi(s,z))),\m(s)\big)\lambda(dz)\d s\right|\nonumber\\
 & \leq I_2+2\sup_{0\leq t\leq T}\left|\int_0^t \big(\sigma(s, u_\theta(s)) (\psi_n(s)
-\psi(s)),\m(s)\big)\d s\right|\label{24}\\&\quad+2\sup_{0\leq t\leq
T}\left|\int_0^t\int_Z\big(g(u_\theta(s),z)(\ell(\phi_n(s,z))-\ell(\phi(s,z))),\m(s)\big)\lambda(dz)\d
s\right|,\nonumber
\end{align}
\begin{align}
\textrm{where
}&I_2=2\int_0^t\big|\sigma(s,\n(s))-\sigma(s,u_\theta(s))\big|_{\mathrm{L}_Q}|\psi_n(s)|_0|\m(s)|\d
s\nonumber\\&
 \quad +2\int_0^t\int_Z \big|g(\n(s),z)-g(u_\theta(s),z)\big||\ell(\phi_n(s,z)||\m(s)|\lambda(dz)\d
 s.
\end{align}
For $I_2$, apply $2ab \leq \eta a^2+\frac{1}{\eta}b^2$ for the first
term by taking $\eta=M$ and $2ab\leq a^2+b^2$ for second term by
taking $a=\big|g(\n(s),z)-g(u_\theta(s),z)\big|$ and $b=|\m(s)|$ to
obtain,
\begin{align}
I_2 &\leq M\int_0^t
\big|\sigma(s,\n(s)-\sigma(s,u_\theta(s)\big|_{\mathrm{L}_Q}^2\d
s+\frac{1}{M}\int_0^t|\psi_n(s)|_0^2|\m(s)|^2\d s\nonumber\\&\quad
+\int_0^t\int_Z\big(|g(\n(s),z)-g(u_\theta(s),z)|^2+|\m(s)|^2\big)|\ell(\phi_n(s,z))|\lambda(dz)\d
s.
\end{align}
Let us take the third term, apply Young's inequality, by using the
control condition on $\phi$ and then apply Hypothesis (H.3) to
obtain,
\begin{align}
I_2&\leq M\int_0^t
\big|\sigma(s,\n(s)-\sigma(s,u_\theta(s)\big|_{\mathrm{L}_Q}^2\d
s+\frac{1}{M}\int_0^t|\psi_n(s)|_0^2|\m(s)|^2\d s\nonumber\\&\quad
+\left(\int_0^t\int_Z|g(\n(s),z)-g(u_\theta(s),z)|^2\lambda(dz)\d
s\right)\left(\sup_{0\leq t\leq T}\sup_{z\in
Z}|\ell(\phi(t,z))|\right)\nonumber\\&\quad +\int_0^t\int_Z
|\m(s)|^2|\ell(\phi_n(s,z))|\lambda(dz)\d s\nonumber\end{align}
\begin{align} \leq ML\int_0^t|\m(s)|^2\d s
+\int_0^t\left[\frac{1}{M}|\psi_n(s)|_0^2+\int_Z|\ell(\phi_n(s,z))|\lambda(dz)\right]|\m(s)|^2\d
s.\label{247}
\end{align}
Now let us substitute \eqref{247} in \eqref{24} to obtain,
\begin{align}
&2\left|\int_0^t \big(\sigma(s, \n(s)) \psi_n(s) - \sigma(s,
u_\theta(s)) \psi(s), \m(s)\big)\d s \right|\nonumber\\&
+2\left|\int_0^t\int_Z\big(g(\n(s),z)\ell(\phi_n(s,z))-g(u_\theta(s),z)\ell(\phi(s,z)),\m(s)\big)\lambda(dz)\d s\right|\nonumber\\
&\leq\int_0^t\left[\frac{1}{M}|\psi_n(s)|_0^2+\int_Z|\ell(\phi_n(s,z))|\lambda(dz)+ML\right]|\m(s)|^2\d
s\nonumber\\&\;+2\sup_{0\leq t\leq T}\left|\int_0^t \big(\sigma(s,
u_\theta(s)) (\psi_n(s) -\psi(s)),\m(s)\big)\d
s\right|\nonumber\\&\;+2\sup_{0\leq t\leq
T}\left|\int_0^t\int_Z\big(g(u_\theta(s),z)(\ell(\phi_n(s,z))-\ell(\phi(s,z))),\m(s)\big)\lambda(dz)\d
s\right|.\label{248}
\end{align}
By the boundedness of $\{|\m(s)|^2\}$ in $C(0, T; H)$, and using the
Lemma \ref{BDL1}, the second integral on the right side of
\eqref{248} goes to $0$ as $n \to \infty$. Therefore, given any
$\epsilon > 0$, there exists an integer $N_1$ large so that for all
$n \ge N_1$,
\begin{equation} \sup_{0 \le t \le T}\left|\int_0^t \big(\sigma(s, u_\theta(s)) (\psi_n(s) - \psi(s)),
\m(s)\big)ds\right| < \frac{\epsilon}{4}.\label{244}
\end{equation}
And by applying the dominated convergence theorem, for any given
$\epsilon>0$, there exists an integer $N_2$, large so that for all
$n\geq N_2$,
\begin{align}
\sup_{0\leq t\leq
T}\left|\int_0^t\int_Z\big(g(u_\theta(s),z)(\ell(\phi_n(s,z))-\ell(\phi(s,z))),\m(s)\big)\lambda(dz)\d
s\right|<\frac{\epsilon}{4}.
\end{align}
Choose $N=\max(N_1,N_2)$ so that,
\begin{align}
&\sup_{0 \le t \le T}\left|\int_0^t \big(\sigma(s, u_\theta(s))
(\psi_n(s) - \psi(s)), \m(s)\big)ds\right|\label{249}\\&+\sup_{0\leq
t\leq
T}\left|\int_0^t\int_Z\big(g(u_\theta(s),z)(\ell(\phi_n(s,z))-\ell(\phi(s,z))),\m(s)\big)\lambda(dz)\d
s\right|<\frac{\epsilon}{2}.\nonumber
\end{align}
Let us define $C_{M,L, \nu} =
\max\left\{ML,\frac{1}{M},\frac{1}{\nu},1\right\}.$ Applying
\eqref{249}, \eqref{248} and \eqref{23} in \eqref{22}, one obtains
for $n \ge N$,
\begin{align}
&|\m(t)|^2 + \nu\int_0^t\|\m(s)\|^2 \d s \label{24a}\\&\leq C_{M,L,
\nu}\int_0^t |\m(s)|^2\left(|u_\theta(s)|^2 +
|\psi_n(s)|^2_0+\int_Z|\ell(\phi_n(s,z))|\lambda(dz)+1 \right) \d s
+ \epsilon.\nonumber
\end{align}
From the above relation one can get by denoting $C_{M,L,\nu}$ by
$\mathbb{C}$,
\begin{align*}
&\sup_{0\leq t\leq T} |\m(t)|^2 + \nu\int_0^T\|\m(t)\|^2 \d
t\nonumber\\& \leq \mathbb{C}\int_0^T\sup_{0\leq s \leq T}
|\m(s)|^2\left(|u_\theta(s)|^2 +
|\psi_n(s)|^2_0+\int_Z|\ell(\phi_n(s,z))|\lambda(dz)+1 \right) \d s
+ \epsilon.
\end{align*}
Hence by applying Gronwall's inequality we get,
\begin{align}
&\sup_{0\leq t\leq T} |\m(t)|^2 + \nu\int_0^T\|\m(t)\|^2 \d
t\nonumber\\& \leq \epsilon  \exp\left\{\mathbb{C}\int_0^T
\left(|u_\theta(t)|^2 + |\psi_n(t)|^2_0
+\int_Z|\ell(\phi_n(s,z))|\lambda(dz)+ 1\right)\d
t\right\}.\label{25}
\end{align}
The arbitrariness of $\epsilon$ finishes the proof.
\end{proof}

\begin{remark}
From Theorem \ref{scontrol} one can see that the  equation
\begin{align}
&\d \uve(t) + \left[\nu A\uve(t) + B(\uve(t), \uve(t))\right] \d t\nonumber\\
&\quad = \left[f(t)+\sigma(t, \uve(t))\psi^{\e}(t)+\int_Z
g(\uve(t),z)\ell(\phi^\e(t,z))\lambda(dz)\right]\d t
\nonumber\\&\quad\quad+ \sqrt{\e}\sigma(t, \uve(t))\d W(t)+\e
\int_Zg(\uve(t-),z)\tilde{N}(dt,dz),\label{26}
\end{align}
with $\uve(0) = \xi\in H$, has unique strong solution in
$\mathrm{L}^2(\Omega; X)$.

As we have noted before, the solution of the above equation admits a
representation $\uve = \mathcal{G}^{\e}\left(\sqrt{\e}W(\cdot) +
\int_0^{\cdot}\psi^{\e}(s) \d s,\e N^{\e^{-1}\phi^{\e}}\right)$ by
pathwise uniqueness of the solution, and the Girsanov theorem. For
similar type of formulation readers can refer to Budhiraja, Dupuis
and Maroulas \cite{BD2} Section 4.1, where the authors have
considered small noise stochastic differential equations(SDE) with
finite dimensional jump diffusions.

For all $\theta \in \mathcal{U}$, let $u_\theta$ be the solution of
the deterministic control equation
\begin{align*}
\d u_\theta(t) + [\nu Au_\theta(t)+ B(u_\theta(t), u_\theta(t))]\d t
&= f(t)\d t + \sigma(t, u_\theta(t)) \psi(t) \d
t\nonumber\\&\quad+\int_Zg(u_\theta(t),z)\ell(\phi(s,z))\lambda(dz)\d
s,
\end{align*}
with initial condition $u_\theta(0)=\xi\in H$.

Note that $\int_0^{\cdot}\, \psi(s)ds \in C([0, T]; H_0)$ and
$\int_0^{\cdot}\int_Z\phi(s,z)\lambda(dz)\d s\in C([0,T];H)$. Define
$\mathcal{G}^0: C([0, T]; H_0)\times C([0,T];H) \to X$ by
$$\mathcal{G}^0(h)=u_\theta\quad \mathrm{if}\; h=\left(\int_0^\cdot
\psi(s)\d s,\int_0^\cdot \int_Z \phi(s,z)\lambda(dz)\d s\right)$$
for some $\theta=(\psi,\phi)\in \mathcal{U}$. If $h$ cannot be
represented as above, then $\mathcal{G}^0(h)=0$.
\end{remark}

\begin{theorem}[Weak convergence]
For proving weak convergence, let us define the set to be
$\left\{\theta^{\e} =(\psi^\e,\phi^\e)\in\mathcal{U}:
\theta^\e(\omega)\in
\bar{S}^M\;\mathbb{P}-\;\mathrm{a.e}\;\omega\;\e>0\right\}\subset\mathcal{U}^M$
converges in distribution to $\theta$ with respect to the weak
topology defined on $\mathcal{U}$. Then we have $
\mathcal{G}^{\e}\left(\sqrt{\e}W(\cdot) + \int_0^{\cdot}\psi^{\e}(s)
\d s,\e N^{\e^{-1}\phi^\e}\right)$ converges in distribution to
$\mathcal{G}^0(\int_0^{\cdot}\psi(s) \d s,\lambda_T^\phi)$ in $X$,
as $\e\rightarrow 0$.
\end{theorem}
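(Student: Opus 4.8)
The plan is to prove the weak convergence claimed in Assumption \ref{assum}(i) by a Skorohod-representation argument combined with a Gronwall-type estimate for the difference between the controlled stochastic solution $\uve$ and the candidate limit $u_\theta$. First I would invoke the Skorohod representation theorem: since $\theta^\e=(\psi^\e,\phi^\e)\to\theta=(\psi,\phi)$ in distribution on the compact metric space $\bar{S}^M$, we may pass to a probability space on which this convergence holds almost surely, and on which the driving noises $(\sqrt\e W,\e N^{\e^{-1}\phi^\e})$ have the same laws as before. On this space I set $\w=\uve-u_\theta$ and write the evolution equation for $\w$, exactly mirroring \eqref{222} but now keeping the stochastic Itô and compensated-Poisson terms carrying the factors $\sqrt\e$ and $\e$ respectively. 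Applying the Itô formula for $|\w(t)|^2$, using $(B(u,v),v)=0$ from Lemma \ref{Bprop1}(iv), the decomposition of Lemma \ref{Bprop2}, and the estimates $|B(u,v)|\le C_1\|u\||v|$, $2ab\le\nu a^2+\nu^{-1}b^2$, together with the Lipschitz hypotheses (H.3) on $\sigma$ and $g$, I obtain an inequality of the schematic form
\begin{align*}
|\w(t)|^2 + \nu\int_0^t\|\w(s)\|^2\,\d s \le \mathbb{C}\int_0^t|\w(s)|^2 h_n(s)\,\d s + R^\e(t) + \epsilon_n,
\end{align*}
where $h_n(s)=|u_\theta(s)|^2+|\psi^\e(s)|_0^2+\int_Z|\ell(\phi^\e(s,z))|\lambda(dz)+1$ is integrable uniformly in $\e$ by the $\bar S^M$ bound, $R^\e(t)$ collects the stochastic integrals (martingale terms plus the $\e$-order compensator corrections), and $\epsilon_n$ collects the cross terms $\int_0^t(\sigma(s,u_\theta(s))(\psi^\e-\psi),\w)\,\d s$ and $\int_0^t\int_Z(g(u_\theta(s),z)(\ell(\phi^\e)-\ell(\phi)),\w)\lambda(dz)\,\d s$ that vanish as $n\to\infty$ by Lemma \ref{BDL1}, dominated convergence, and the a priori bound on $\w$ in $C(0,T;H)$, precisely as in the proof of Theorem \ref{compact}.

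The next step is to control $R^\e(t)$. Here I would take expectation and a supremum over $t\le T$, apply Lemma \ref{bdg} (or Lemma \ref{lemma100}, Kunita's inequality) to the compensated-Poisson integral and the Burkholder--Davis--Gundy inequality to the Brownian stochastic integral; because these terms carry explicit powers $\sqrt\e$ and $\e$, and because the integrands are controlled by $\sigma$, $g$ evaluated along $\uve$ and $u_\theta$, both of which satisfy the uniform energy estimate \eqref{12}, one gets $\mathbb{E}\sup_{t\le T}|R^\e(t)|\le C\sqrt\e\to 0$. Then Gronwall's inequality applied to $\mathbb{E}\sup_{s\le t}|\w(s)|^2$, with the random but uniformly integrable multiplier $\exp\{\mathbb{C}\int_0^T h_n(s)\,\d s\}$ bounded by a deterministic constant (using the definition of $\bar S^M$), yields $\mathbb{E}\big(\sup_{t\le T}|\w(t)|^2+\nu\int_0^T\|\w(t)\|^2\,\d t\big)\to 0$. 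This gives convergence in probability, hence in distribution, of $\uve$ to $u_\theta=\mathcal{G}^0(\int_0^\cdot\psi(s)\,\d s,\lambda_T^\phi)$ in $X=\mathcal{D}(0,T;H)\cap\mathrm{L}^2(0,T;V)$, which is the assertion.

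The main obstacle I anticipate is the handling of the nonlinear term $B$ together with the $\mathrm{L}^2(0,T;V)$ part of the norm: the inequality $2|(B(\uve)-B(u_\theta),\w)|\le\nu\|\w\|^2+\nu^{-1}|\w|^2|u_\theta|^2$ barely absorbs the viscosity, and one must be careful that the absorbed $\nu\|\w\|^2$ does not collide with the $\nu\int\|\w\|^2$ we wish to keep on the left-hand side — the factor-of-two in \eqref{22} is exactly what makes this work, as in Theorem \ref{compact}. A secondary technical point is that the stochastic terms must be estimated before, not after, taking the supremum inside Gronwall; this is why the $\sqrt\e$ gain is essential and why one cannot simply quote the deterministic argument verbatim. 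Apart from this, the proof is a routine adaptation of Theorem \ref{compact} with the extra vanishing noise terms, so once the $\e$-dependent estimates on $R^\e$ are in place the conclusion follows.
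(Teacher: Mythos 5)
Your proposal is correct and follows essentially the same route as the paper: Skorokhod representation to upgrade the convergence of the controls to almost sure convergence, the It\^{o} formula for $|\w(t)|^2$ with the bilinear-operator estimates and Lipschitz hypotheses, Burkholder--Davis--Gundy and Lemma \ref{bdg} to show the $\sqrt{\e}$- and $\e$-weighted stochastic integrals vanish, Gronwall, and finally Markov's inequality to pass from $\mathrm{L}^1$ convergence to convergence in probability and hence in distribution. The technical points you flag (absorbing $\nu\|\w\|^2$ against the factor of two, and estimating the martingale terms before Gronwall) are exactly the ones the paper's proof navigates.
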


\begin{proof}
Since $\bar{S}^M$ is a Polish space, the Skorokhod representation
theorem can be introduced to construct processes
$(\tilde{\theta}^{\e}, \tilde{\theta},
\tilde{W}^{\e},\tilde{\lambda}_T)$ such that the distribution of
$(\tilde{\theta}^{\e}, \tilde{\theta},
\tilde{W}^{\e},\tilde{\lambda}_T)$ is same as that of $(\theta^{\e},
\theta, W,\lambda_T)$, and $\tilde{\theta}^{\e} \to \tilde{\theta}$
a.s. in the weak topology of $\bar{S}^M$. Thus $\int_0^t
\tilde{\theta}^{\e}(s) \d s\rightarrow \int_0^t \tilde{\theta}(s) \d
s$ weakly in $H$ a.s. for all $t\in [0, T]$. Without any loss of
generality, we will write $(\theta^{\e}, \theta, W,\lambda_T)$ in
what follows, though strictly speaking, one should write
$(\tilde{\theta}^{\e}, \tilde{\theta},
\tilde{W}^{\e},\tilde{\lambda}_T)$.

Let $\w(t) = \uve(t)- u_\theta(t)$. We need to prove, in probability
as $\e \to 0$, $\sup_{0\leq t\leq T} |\w(t)|^2 + \int_0^T
\|\w(t)\|^2 \d t \to 0.$

For $\w(t)$ we will get the stochastic differential equation as
\begin{align}
&\d \w(t)+[\nu
A\w(t)+B(\uve(t),\uve(t))-B(u_\theta(t),u_\theta(t))]\d t
\nonumber\\&=[\sigma(t,\uve(t))\psi^\e(t)-\sigma(t,u_\theta(t))\psi(t)]\d
t\nonumber\\&\quad+\int_Z\left[g(t,\uve(t))\ell(\phi^\e(t,z))-g(u_\theta(t),z)\ell(\phi(t,z))\right]\lambda(dz)\d
t \nonumber\\&\quad+\sqrt{\e}\sigma(t,\uve(t))\d W(t)+\e
\int_Zg(\uve(t-),z)\tilde{N}(dt,dz).
\end{align}
By applying It\^{o}'s Lemma for the process $|\w(t)|^2$ and
integrating from $0\leq s\leq t$,
\begin{align}
&|\w(t)|^2+2\nu\int_0^t\|\w(s)\|^2\d
s+2\int_0^t(B(\uve(s))-B(u_\theta(s)),\w(s))\d
s\nonumber\\&\quad=I_3+I_4
+2\sqrt{\e}\int_0^t(\sigma(s,\uve(s)),\w(s))\d
W(s)\nonumber\\&\quad\quad+2\e\int_0^t\int_Z(\w(s-),g(\uve(s-),z))\tilde{N}(ds,dz),\;\;\textrm{where}\label{144}
\end{align}
\begin{align*}
&I_3=
2\int_0^t\big(\sigma(s,\uve(s))\psi^\e(s)-\sigma(s,u_\theta(s))\psi(s),\w(s)\big)\d
s\nonumber\\&\quad\quad
+2\int_0^t\int_Z\big(g(\uve(s),z)\ell(\phi^\e(s,z))-g(u_\theta(s),z)\ell(\phi(s,z)),\w(s)\big)\lambda(dz)\d
s,\nonumber\\& I_4= \e \int_0^t
Tr(\sigma(s,\uve(s))Q\sigma(s,\uve(s)))\d s +\e
\int_0^t\int_Z|g(\uve(s-),z)|^2\lambda(dz)\d s.
\end{align*}
Notice that by applying similar techniques as in Theorem
\ref{compact}, one obtains,
\begin{align}
&|I_3|\leq
C_{M,L,\nu}\int_0^t|\w(s)|^2\left(1+|\psi^\e(s)|_0^2+\int_Z|\ell(\phi^\e(s,z))|\lambda(dz)\right)\d
s\nonumber\\&\quad\quad
+2\left|\int_0^t\big(\sigma(s,u_\theta(s))(\psi^\e(s)-\psi(s)),\w(s)\big)\d
s\right|\nonumber\\&\quad \quad+2\left|\int_0^t\int_Z
\big(g(u_\theta(s),z)(\ell(\phi^\e(s,z))-\ell(\phi(s,z))),\w(s)\big)\lambda(dz)\d
s\right|\nonumber\end{align} \begin{align} &\leq
C_{M,L,\nu}\int_0^t|\w(s)|^2\left(1+|\psi^\e(s)|_0^2+\int_Z|\ell(\phi^\e(s,z))|\lambda(dz)\right)\d
s\nonumber\\&\quad
+\int_0^t\left(|\sigma(s,u_\theta(s))(\psi^\e(s)-\psi(s))|^2+|\w(s)|^2\right)\d
s\nonumber\\&\quad
+\int_0^t\int_Z\left(|g(u_\theta(s),z)(\ell(\phi^\e(s,z))-\ell(\phi(s,z)))|^2+|\w(s)|^2\right)\lambda(dz)\d
s\nonumber\\&\leq
C_{M,L,\nu}\int_0^t|\w(s)|^2\left(2+\int_Z\lambda(dz)+|\psi^\e(s)|_0^2+\int_Z|\ell(\phi^\e(s,z))|\lambda(dz)\right)\d
s\nonumber\\&\quad+\int_0^t|\sigma(s,u_\theta(s))|^2|\psi^\e(s)-\psi(s)|^2\d
s\nonumber\\&\quad+\int_0^t\int_Z|g(u_\theta(s),z)|^2|\ell(\phi^\e(s,z))-\ell(\phi(s,z))|^2\lambda(dz)\d
s,
\end{align} where $C_{M,L,\nu}=\max\{ML,\frac{1}{M},\frac{1}{\nu},1\}$.
Now let us take the term $I_4$ from \eqref{144} and apply condition
(H.2) and Corollary \ref{re11} to obtain,
\begin{align}
&\e\int_0^t Tr(\sigma(s,\uve(s))Q\sigma(s,\uve(s)))\d s+\e \int_0^t
\int_Z |g(\uve(s),z)|^2\lambda(dz)\d s\nonumber\\&\quad\leq \e
K\left(T+C\right).
\end{align}
By using the above estimates and denoting $C_{M,L,\nu}$ as
$\mathbb{C}$, $\ell(\phi^\e(t,z)$ as $\ell(\phi^\e)$ and
$\ell(\phi(t,z)$ as $\ell(\phi)$, we can obtain from equation
\eqref{144} by taking supremum from $0\leq t\leq T$ and then
expectation as before,
\begin{align}\label{350}
&\mathbb{E}\left[\sup_{0\leq t\leq T}|\w(t)|^2 + \nu\int_0^T
\|\w(t)\|^2 \d t \right]\nonumber\\& \leq
\mathbb{C}\mathbb{E}\left[\int_0^T\sup_{0\leq t\leq
T}|\w(t)|^2\left(2+|u_\theta(t)|^2+|\psi^\e(t)|_0^2+\int_Z(1+|\ell(\phi^\e)|)\lambda(dz)\right)\d
t\right]\nonumber\\&\quad+\e
K\left(T+C\right)+\int_0^T|\sigma(t,u_\theta(t))|^2|\psi^\e(t)-\psi(t)|^2\d
t\nonumber\\&\quad+\int_0^T\int_Z|g(u_\theta(t),z)|^2|\ell(\phi^\e)-\ell(\phi)|^2\lambda(dz)\d
t\nonumber\\&\quad+2\sqrt{\e}\mathbb{E}\left[\sup_{0\leq t\leq
T}\left|\int_0^t\big(\sigma(s,\uve(s)),\w(s)\big)\d
W(s)\right|\right]\nonumber\\&\quad+2\e\mathbb{E}\left[\sup_{0\leq
t\leq
T}\left|\int_0^t\int_Z\big(\w(s-),g(\uve(s-),z)\big)\tilde{N}(ds,dz)\right|\right].
\end{align}
Let us take the term $2\sqrt{\e}\mathbb{E}\left[\sup_{0\leq t\leq
T}\left|\int_0^t\big(\sigma(s,\uve(s)),\w(s)\big)\d
W(s)\right|\right]$ from \eqref{350} and apply
Burkholder-Davis-Gundy inequality, Young's inequality, Hypothesis
(H.2) and Corollary \ref{re11} to obtain,
\begin{align}
&2\sqrt{\e}\mathbb{E}\left(\sup_{0\leq t\leq
T}\left|\int_0^t\big(\sigma(s,\uve(s)),\w(s)\big)\d
W(s)\right|\right)\nonumber\\&\leq 2\sqrt{2\e
K}\mathbb{E}\left(\int_0^T(1+|\uve(s)|^2)|\w(s)|^2\d s\right)^{1/2}
\nonumber\\&\leq 2\sqrt{2\e K}\left[\frac{1}{8\sqrt{2\e
K}}\mathbb{E}\left(\sup_{0\leq t\leq T}|\w(t)|^2\right)+2\sqrt{2\e
K}\mathbb{E}\left(\int_0^T(1+|\uve(s)|^2)\d
s\right)\right]\nonumber\\&\leq
\frac{1}{4}\mathbb{E}\left(\sup_{0\leq t\leq T}|\w(t)|^2\right)+8\e
K(T+C)\label{250}.
\end{align}
Consider the term $2\e\mathbb{E}\left[\sup_{0\leq t\leq
T}\left|\int_0^t\int_Z\big(\w(s-),g(\uve(s-),z)\big)\tilde{N}(ds,dz)\right|\right]$
from \eqref{350} and apply the Burkholder-Davis-Gundy inequality in
the form given in Lemma \ref{bdg}, Hypothesis (H.2) and Corollary
\ref{re11} to obtain,
\begin{align}
&2\e \mathbb{E}\left(\sup_{0\leq t\leq
T}\left|\int_0^t\int_Z\big(\w(s-),g(\uve(s-),z)\big)\tilde{N}(ds,dz)\right|\right)
\nonumber\\&\leq
2\e\sqrt{2}\mathbb{E}\left(\int_0^T\int_Z|\w(s)|^2|g(\uve(s),z)|^2\lambda(dz)\d
s\right)^{1/2}\nonumber\\&\leq 2\e
\sqrt{2K}\mathbb{E}\left[\sup_{0\leq t\leq
T}|\w(t)|\left(\int_0^T(1+|\uve(s)|^2)\d
s\right)^{1/2}\right]\nonumber\\&\leq 2\e
\sqrt{2K}\left[\frac{1}{8\e\sqrt{2K}}\mathbb{E}\left(\sup_{0\leq
t\leq
T}|\w(t)|^2\right)+2\e\sqrt{2K}\mathbb{E}\left(\int_0^T(1+|\uve(s)|^2)\d
s\right)\right]\nonumber\\&\leq
\frac{1}{4}\mathbb{E}\left(\sup_{0\leq t\leq
T}|\w(t)|^2\right)+8K\e^2(T+C).
\end{align}
By using all these estimates, we can reduce the inequality
\eqref{350} as,
\begin{align}
&\mathbb{E}\left[\sup_{0\leq t\leq T}|\w(t)|^2 + 2\nu\int_0^T
\|\w(t)\|^2 \d t\right]\nonumber\\
&\leq 2\mathbb{C}\mathbb{E}\left[\int_0^T \sup_{0\leq t\leq
T}|\w(t)|^2\left(2+|u_\theta(t)|^2 + |\psi^{\e}(t)|_0^2+\int_Z
(1+|\ell(\phi^\e)|)\lambda(dz) \right)\d
t\right]\nonumber\\&\quad+2K\e [(9+\e)(C+T)]+2\int_0^T |\sigma(t,
u_\theta(t))|^2 |(\psi^{\e}(t) - \psi(t))|^2 \d t
\nonumber\\
&\quad+
2\int_0^T\int_Z|g(u_\theta(t),z)|^2|\ell(\phi^\e(t,z))-\ell(\phi(t,z))|^2\lambda(dz)\d
t.
\end{align}
Then the Gronwall's inequality yields,
\begin{align}
&E\left[\sup_{0\leq t\leq
T}|\w(t)|^2 + 2\nu\int_0^T \|\w(t)\|^2 \d t\right] \nonumber\\
&\leq \left(2K\e [(9+\e)(C+T)]++2\int_0^T |\sigma(t, u_\theta(t))|^2
|(\psi^{\e}(t) - \psi(t))|^2 \d
t\right.\nonumber\\&\quad\qquad\quad\left.+
2\int_0^T\int_Z|g(u_\theta(t),z)|^2|\ell(\phi^\e(t,z))-\ell(\phi(t,z))|^2\lambda(dz)\d
t\right)\label{29}\\&\quad\quad \times
\exp\left\{2\mathbb{C}\int_0^T\left(2+|u_\theta(t)|^2 +
|\psi^{\e}(t)|_0^2+\int_Z (1+|\ell(\phi^\e(t,z))|)\lambda(dz)
\right)\d t\right\}.\nonumber
\end{align}
We have given that $\theta^\e(t)\rightarrow\theta(t)$ a.s in the
weak topology of $\mathcal{U}^M$. Since $\psi^{\e} \to \psi$ a.s. in
the weak topology of $\tilde{S}^M$ and
$\ell(\phi^\e(t,z))\rightarrow\ell(\phi(t,z))$ in a.s the weak
topology of $S^M$ (for further details see Theorem 4.4 of
\cite{Mv}), it is clear from the equation \eqref{29} that as $\e \to
0$, $ \mathbb{E}\left[\sup_{0\leq t\leq T}|\w(t)|^2 + 2\nu\int_0^T
\|\w(t)\|^2 \d t\right] \to 0. $ Let $\delta > 0$ be any arbitrary
number. Then by Markov's inequality
\begin{align*}
&\mathbb{P}\left\{\sup_{0\leq t\leq T}|\w(t)|^2 + 2\nu\int_0^T
\|\w(t)\|^2 \d t
\geq \delta\right\}\nonumber\\
&\quad \leq \frac{1}{\delta}\mathbb{E}\left[\sup_{0\leq t\leq
T}|\w(t)|^2 + 2\nu\int_0^T \|\w(t)\|^2 \d t\right] \to 0\ \text{as}\
\e \to 0.
\end{align*}
Thus, $ \sup_{0\leq t\leq T}|\uve(t)-u_\theta(t)|^2 + \nu\int_0^T
\|\uve(t)-u_\theta(t)\|^2 \d t \to 0, $ in probability as $\e\to 0$.
The proof is now complete.
\end{proof}

\medskip\noindent
{\bf Acknowledgements:} Manil T. Mohan would like to thank Council
of Scientific and Industrial Research (CSIR), India for a Senior
Research Fellowship (SRF). The authors would also like to thank
Indian Institute of Science Education and Research (IISER)-
Thiruvananthapuram for providing stimulating scientific environment
and resources. The authors would also like to thank the anonymous
referee for his/her valuable comments.

\end{document}